\newtheorem{theo}{Theorem}[section]
\newenvironment{example}{ \smallskip \addtocounter{theo}{1} \noindent \textbf{Example  \arabic{section}.\arabic{theo}~}}{}
\newtheorem{lemma}[theo]{Lemma}
\newtheorem{corollary}[theo]{Corollary}
\newtheorem{prop}[theo]{Proposition}
\newenvironment{proof}[1][]{ \textbf{Proof#1. }}{$\Box$\medskip}
\newtheorem{defi}[theo]{Definition}
\newcommand{\WW}{\mathbf{W}}
\newcommand {\ZZ} {\mathbb {Z}}
\newcommand {\QQ} {\mathbb {Q}}
\newcommand {\CC} {\mathbb {C}}
\newcommand{\sS}{\mathfrak{s}}
\newcommand{\tT}{\mathfrak{t}}
\newcommand{\kk}{\mathfrak{k}}
\newcommand{\nn}{\mathfrak{n}}
\newcommand{\hh}{\mathfrak{h}}
\newcommand{\qq}{\mathfrak{q}}
\newcommand{\pp}{\mathfrak{p}}
\newcommand{\gG}{\mathfrak{g}}
\newcommand{\bb}{\mathfrak{b}}
\newcommand{\lL}{\mathfrak{l}}
\renewcommand {\phi} {\varphi}
\newcommand{\half}{\frac{1}{2}}
\newcommand{\refth}[1]{Theorem \ref{#1}}
\newcommand{\refle}[1]{Lemma \ref{#1}}
\newcommand{\refsec}[1]{section \ref{#1}}
\newcommand{\refcor}[1]{Corollary \ref{#1}}
\newcommand{\refprop}[1]{Proposition \ref{#1}}
\newcommand{\refeq}[1]{(\ref{#1})}
\newcommand{\refdef}[1]{Definition \ref{#1}}
\newcommand{\sL}{\mathrm{sl}}
\newcommand{\sP}{\mathrm{sp}}
\newcommand{\linspan}{\mathrm{span}}
\newcommand{\so}{\mathrm{so}}
\newcommand{\Cone}{\mathrm{Cone}}
\newcommand{\hw}{\mathrm{Sing}}
\newcommand{\eps}{\varepsilon}
\newcommand{\rk}{\mathrm{rk}}
\def\cplus{\hbox{$\subset${\raise0.3ex\hbox{\kern -0.55em ${\scriptscriptstyle +}$}}\ }}
\def\clplus{\hbox{$\subset${\raise0.3ex\hbox{\kern -0.55em ${\scriptscriptstyle +}$}}\ }}
\def\crplus{\hbox{$\supset${\raise1.05pt\hbox{\kern -0.60em ${\scriptscriptstyle +}$}}\ }}
\def\nperp{\hbox{$\perp${\raise1.05pt\hbox{\kern -0.55em $/$}}\ }}
\def\sperp{\hbox{$\perp${\hbox{\kern -.77em $-$}\raise-1pt\hbox{\kern -.77em $-$}}\ }}
\def\nsimeq{\hbox{$\simeq${\raise1.05pt\hbox{\kern -0.55em $/$}}\ }}
\newcommand{\pma}{\pm}
\newcommand{\pmf}[1]{ \pm}
\newcommand{\arxivVersion}[1]{#1}
\author{Todor Milev}
\title{
Root Fernando-Kac subalgebras of finite type
}
\begin{document}
\maketitle
\begin{abstract}
Let $\gG$ be a finite-dimensional Lie algebra and $M$ be a $\gG$-module. The Fernando-Kac subalgebra of $\gG$ associated to $M$ is the subset $\gG[M]\subset\gG$ of all elements $g\in\gG$ which act locally finitely on $M$. A subalgebra $\lL\subset\gG$ for which there exists an irreducible module $M$ with $\gG[M]=\lL$ is called a Fernando-Kac subalgebra of $\gG$. A Fernando-Kac subalgebra of $\gG$ is of finite type if in addition $M$ can be chosen to have finite Jordan-H\"older $\lL$-multiplicities. Under the assumption that $\gG$ is simple, I. Penkov has conjectured an explicit combinatorial criterion describing all Fernando-Kac subalgebras of finite type which contain a Cartan subalgebra. In the present paper we prove this conjecture for $\gG\nsimeq E_8$. \end{abstract}

\section{Introduction} 

Let $\gG$ be a finite-dimensional complex Lie algebra and $M$ be a $\gG$-module. \emph{The Fernando-Kac subalgebra} $\gG[M]\subset\gG$ associated to $M$ is by definition the subset of elements of $\gG$ which act locally finitely on $M$. The fact that $\gG[M]$ is a subalgebra of $\gG$ was independently proved by V. Kac and S. Fernando, \cite{Kac:FernandoKacSubalgebra}, \cite{Fernando1}. The $\gG$-module $M$ is said to be a \emph{$(\gG,\kk)$-module} if $\kk\subset\gG[M]$, and to be a \emph{strict} $(\gG,\kk)$-module if $\gG[M]=\kk$. A subalgebra $\kk$ of $\gG$ is defined to be a \emph{Fernando-Kac subalgebra} if there exists an irreducible strict $(\gG,\kk)$-module $M$. Under a \emph{root subalgebra} of $\gG$ we understand a subalgebra of $\gG$ which contains a Cartan subalgebra $\hh\subset\gG$. 

A $(\gG,\kk)$-module $M$ is of \emph{finite type} if for any fixed irreducible finite-dimensional $\kk$-module $V$ the Jordan-H\"older multiplicities of $V$ in all finite-dimensional $\kk$-submodules of $M$ are uniformly bounded. A Fernando-Kac subalgebra $\kk$ of $\gG$ is of \emph{finite type} if $\kk=\gG[M]$ for some irreducible $(\gG,\kk)$-module $M$ of finite type. Otherwise, $\kk$ is of \emph{infinite type}. In what follows, $\gG$ will be assumed reductive. 

This paper completes the classification of the root Fernando-Kac subalgebras of finite type of the classical simple Lie algebras. This classification was initiated in \cite{PS:GenHarishChandra,PSZ}. It is proved in \cite{PS:GenHarishChandra} that every root subalgebra of $\gG$ is a Fernando-Kac subalgebra, not necessarily of finite type. In \cite{PSZ}, I. Penkov, V. Serganova and G. Zuckerman gave a construction of an infinite family of irreducible $(\gG,\kk)$-modules of finite type by using a procedure of geometric induction from irreducible $(\pp_{red},\hh)$-modules of finite type ($\pp_{red}$ stands for the reductive part of a parabolic subalgebra $\pp$). This enabled them to determine all root Fernando-Kac subalgebras of finite type for $\gG=\sL(n)$, as they showed that all such subalgebras arise through this construction. Furthermore, I. Penkov conjectured an explicit description of all root Fernando-Kac subalgebras of finite type in terms of their root systems. It claims that two conditions, the ``cone condition'' and the ``centralizer condition'' (see \refdef{defCone} below), are necessary and sufficient for a subalgebra to be a root Fernando-Kac of finite type (\refth{thMainResult}). The ``centralizer condition'' is a consequence of S. Fernando's result \cite{Fernando1} that Lie algebras of type $B$ and $D$ do not admit strict irreducible $(\gG,\hh)$-module of finite type, and is trivially satisfied in both types $A$ and $C$. Moreover, it can be verified that Penkov's conjecture is compatible with decomposing a semisimple subalgebra into simple ideals, hence it suffices to prove it for the simple Lie algebras.

In the present paper, we prove Penkov's conjecture for all simple Lie algebras except $E_8$. The proof of \refth{thMainResult} has two distinct parts. The first part establishes that all root subalgebras $\lL$ which do not satisfy the cone condition are Fernando-Kac subalgebras of infinite type. This is the main contribution of the present paper and makes up for all of sections \ref{secConeIntersections} and \ref{secBigTedious}. In section \ref{secConeIntersections}, for any root subalgebra $\lL$ we give a combinatorial definition of an $\lL$-infinite weight (Definition \ref{defStronglyPerp}), equivalent to the existence of certain $\sL(2)$- subalgebras of $\gG$ with pairwise strongly orthogonal roots. Then, assuming the existence of an $\lL$-infinite weight, we construct a $\kk$-type of infinite multiplicity in any strict $(\gG,\lL)$-module. 

In section \ref{secBigTedious}, we complete this first part of the proof by showing that, for a simple Lie algebra $\gG\nsimeq E_8$, the failure of the cone condition implies the existence of an $\lL$-infinite weight. The argument for the classical Lie algebras goes through classifying minimal cone intersection relations (Lemma \ref{leBigTedious}). The proof for the exceptional Lie algebras $F_4$, $E_6$ and $E_7$ involves a computer computation, performed by a C++ program written by the author\footnote{The program and its source code are publicly available under the name ``vector partition'' program}. It is our conjecture that the failure of the cone condition implies the existence of an $\lL$-infinite weight for the exceptional Lie algebra $E_8$ as well; we have not yet been able to prove (or disprove) this fact due to the size of the computation. The latter issue should be resolved algorithmically in the near future. 

To complete the proof, for a given root subalgebra $\lL=\kk\crplus\nn$ satisfying the cone and the centralizer conditions, one constructs a strict irreducible $(\gG,\lL)$-module $M$. Such a general construction is already contained in \cite{PSZ}; here, we only show that any root subalgebra $\lL$ as above provides input data for it. This is ensured by \refprop{lePara}.

\textbf{Acknowledgements.} I would like to thank my advisor Ivan Penkov for guiding my study of Fernando-Kac subalgebras of finite type and for pointing out a number of inaccuracies in preliminary drafts. I would also like to thank Pavel Tumarkin for his very helpful criticism of the text. Finally, I acknowledge the funding of my studies through the DAAD (Deutscher Akademischer Austauschdienst).

\arxivVersion{\tableofcontents}

\section{Preliminaries}
The base field is $\CC$. $U(\bullet)$ stands for universal enveloping algebra. A reductive Lie algebra $\gG$ and its Cartan subalgebra $\hh$ are assumed fixed. All root subalgebras of $\gG$ we consider are assumed to contain $\hh$. The $\hh$-roots of $\gG$ are denoted by $\Delta(\gG)$. By $\lL$ we denote a variable root subalgebra of $\gG$ with nilradical $\nn$. We denote by $\kk$ the unique reductive part of $\lL$ which contains $\hh$, and we write $\lL={ \kk}\crplus\nn$. The root spaces of $\lL$ are automatically root spaces of $\gG$, and we denote by $\Delta(\lL)$ (respectively, $\Delta(\kk)$) the set of roots of $\lL$ (respectively of $\kk$); $\Delta(\kk)\subset\Delta(\lL)\subset\Delta(\gG)$. We also put $\Delta(\nn):=\Delta(\lL)\backslash \Delta(\kk)$. There are vector space decompositions 
\[
\begin{array}{cc}
\displaystyle\gG=\hh\oplus\bigoplus_{\alpha\in\Delta(\gG)}\gG^\alpha,& \displaystyle\lL=\hh\oplus\bigoplus_{\alpha \in \Delta(\lL)} \gG^{\alpha},\\
\displaystyle \kk=\hh\oplus\bigoplus_{\substack{ \alpha\in\Delta(\lL) :\\-\alpha \in\Delta(\lL)}}\gG^\alpha,&\displaystyle\nn=\bigoplus_{ \substack{\alpha\in \Delta(\lL) :\\ -\alpha \notin \Delta(\lL)}}\gG^{\alpha}\\
\end{array}.
\] 
We fix a Borel subalgebra $\bb\supset\hh$ whose roots are by definition the \emph{positive} roots; we denote them by $\Delta^+(\gG)$. Given a set of roots $I$, we denote by $\Cone_{\ZZ}(I)$ (respectively, $\Cone_{\QQ}(I)$) the $\ZZ_{\geq 0}$-span (respectively, $\QQ_{\geq 0}$-span) of $I$.

The form on $\hh^*$ induced by the Killing form is denoted by $\langle\bullet,\bullet\rangle$. The sign $\sperp$ stands for \emph{strongly orthogonal}; two roots $\alpha,\beta$ are defined to be strongly orthogonal if neither $\alpha+\beta$ nor $\alpha-\beta$ is a root or zero (which implies $\langle\alpha,\beta\rangle=0$).  We say that a root $\alpha$ is \emph{linked} to an arbitrary set of roots $I$ if there is an element of $I$ that is not orthogonal to $\alpha$. The Weyl group of $\gG$ is denoted by $\WW$. For two roots $\alpha,\beta\in\Delta(\gG)$ we say that $\alpha\preceq\beta$ if $\beta-\alpha$ is a non-negative linear combination of positive roots.

For any subalgebra $\sS\subset\gG$ we denote by $N(\sS)$ (respectively, $C(\sS)$) the normalizer (respectively, the centralizer) of $\sS$ in $\gG$. If $\sS$ is reductive, we set $\sS_{ss}=[\sS,\sS]$, and if $\sS$ contains $\hh$ we denote by $\sS_{red}$ the reductive part of $\sS$ containing $\hh$ (in particular, $\kk=\lL_{red}$). 

\begin{lemma}\label{leC(kss)}
Let $\kk\subset\gG$ be a reductive root subalgebra. Then $C(\kk_{ss})= \hh_1\oplus$ $\bigoplus_{ \alpha\sperp\Delta(\kk)}$ $\gG^{\alpha}$, where $\hh_1=\{h\in\hh~|~\gamma(h)=0, \forall \gamma \in \Delta( \kk)\}$. 
\end{lemma}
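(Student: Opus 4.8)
The plan is to exploit the fact that $C(\kk_{ss})$ is stable under $\ad\hh$ and therefore decomposes as a direct sum of $\hh$-weight spaces, each of which is either a line $\gG^\alpha$ or a subspace of $\hh$; it then remains to decide, weight by weight, which of these lie in the centralizer.

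First I would check the $\ad\hh$-invariance. Since $\hh\subset\kk$, the semisimple part $\kk_{ss}=[\kk,\kk]$ is an ideal of $\kk$, so $[\hh,\kk_{ss}]\subset\kk_{ss}$. If $x\in C(\kk_{ss})$ and $h\in\hh$, the Jacobi identity gives $[[h,x],\kk_{ss}]=[h,[x,\kk_{ss}]]-[x,[h,\kk_{ss}]]=0$, because $[x,\kk_{ss}]=0$ and $[h,\kk_{ss}]\subset\kk_{ss}$. Hence $C(\kk_{ss})$ is $\ad\hh$-stable, and since the $\hh$-weight spaces of $\gG$ are $\hh$ (weight $0$) and the lines $\gG^\alpha$, $\alpha\in\Delta(\gG)$, we obtain $C(\kk_{ss})=(C(\kk_{ss})\cap\hh)\oplus\bigoplus_{\alpha}(C(\kk_{ss})\cap\gG^\alpha)$.

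Next I would compute the two kinds of contributions. Because $\kk_{ss}$ is generated as a Lie algebra by the root spaces $\gG^\gamma$, $\gamma\in\Delta(\kk)$ (their brackets supply the coroots spanning $\hh\cap\kk_{ss}$), an element centralizes $\kk_{ss}$ if and only if it centralizes every such $\gG^\gamma$. For the weight-zero part this reads $\gamma(h)=0$ for all $\gamma\in\Delta(\kk)$, giving exactly $\hh_1$. For a root $\alpha$ I would use the standard bracket facts $[\gG^\alpha,\gG^\gamma]=\gG^{\alpha+\gamma}$ when $\alpha+\gamma\in\Delta(\gG)$, $[\gG^\alpha,\gG^{-\alpha}]\subset\hh$ nonzero, and $[\gG^\alpha,\gG^\gamma]=0$ otherwise, so that $\gG^\alpha$ commutes with $\gG^\gamma$ precisely when $\alpha+\gamma$ is neither a root nor zero. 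Running this over all $\gamma\in\Delta(\kk)$ and using $\Delta(\kk)=-\Delta(\kk)$, the condition $\gG^\alpha\subset C(\kk_{ss})$ becomes: for every $\gamma\in\Delta(\kk)$ neither $\alpha+\gamma$ nor $\alpha-\gamma$ is a root or zero, which is exactly $\alpha\sperp\Delta(\kk)$. Assembling the two contributions yields the claimed description.

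The computation is essentially routine; the only point requiring care is the reduction ``centralizes $\kk_{ss}$ iff centralizes each $\gG^\gamma$'', i.e. that the root spaces indexed by $\Delta(\kk)$ generate $\kk_{ss}$, together with the edge cases $\alpha\pm\gamma=0$ in the bracket analysis (these correctly force every $\alpha\in\Delta(\kk)$ to be excluded, as such roots fail strong orthogonality with themselves). I expect no genuine obstacle beyond keeping this case distinction clean.
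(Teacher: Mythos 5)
Your proposal is correct and is in substance the same as the paper's proof: the paper brackets a general element $x=h+\sum_{\alpha}a_\alpha g^{\alpha}$ against each $g^{\gamma}$, $\gamma\in\Delta(\kk)$, and reads off the same conditions from linear independence of root vectors, which is exactly the weight-by-weight analysis you organize explicitly via $\ad\hh$-stability and the fact that $\kk_{ss}$ is generated by its root spaces. There is no gap; the edge case you flag (roots $\alpha\in\Delta(\kk)$ failing strong orthogonality with themselves, via $[\gG^{\alpha},\gG^{-\alpha}]\neq 0$) is handled implicitly in the paper's computation in the same way.
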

\begin{proof}
Let $x:=h+\sum_{\alpha\in\Delta(\gG)}a_\alpha g^{\alpha}\in C(\kk_{ss})$, where $g^\alpha\in\gG^\alpha$. For any $\gamma\in\Delta(\kk)$ we have $0=[x,g^\gamma] =\gamma(h)g^\gamma+ \sum_{\alpha\in\Delta(\gG)} a_\alpha c_{\alpha \gamma} g^{\alpha +\gamma}$, where $g^{\alpha +\gamma} =0$ if $\alpha+\gamma$ is not a root and $c_{\alpha\gamma}\neq 0$ whenever $\alpha+\gamma$ is a root. Therefore $a_\alpha=0$ for all $\alpha$ which are not strongly orthogonal to $\Delta(\kk)$ and $\gamma(h)=0$. On the other hand, it is clear that when $\gamma(h)=0$ for all $\gamma\in \Delta(\kk)$, and $a_\alpha$ are arbitrary, then $h+\sum_{\alpha\sperp\Delta(\kk)}a_\alpha g^\alpha$ is an element of $C(\kk_{ss})$.
\end{proof}

We fix the conventional expressions for the positive roots of the classical root systems:
\begin{eqnarray*}
A_n, n\geq 2&:& \Delta^+(\gG)=\{\eps_i-\eps_j|i< j\in\{1,\dots,n+1\}\};\\
B_n, n\geq 2&:& \Delta^+(\gG)=\{\eps_i\pma\eps_j|i< j\in\{1,\dots,n\}\}\cup\{\eps_i|i\in\{1,\dots n\}\};\\
C_n, n\geq 2&:& \Delta^+(\gG)=\{\eps_i\pma\eps_j|i\leq j\in\{1,\dots,n\}\}\backslash\{0\};\\
D_n, n\geq 4&:& \Delta^+(\gG)=\{\eps_i\pma\eps_j|i< j\in\{1,\dots,n\}\}.
\end{eqnarray*}

\section{Statement of the result}
\begin{defi}\label{defCone}
(I. Penkov) 
\begin{itemize}
\item[(a)] \emph{Cone condition}. We say that $\lL$ satisfies the cone condition if $\Cone_{\QQ}(\Delta(\nn))$ $\cap$ $ \Cone_{\QQ}( \hw_{ \bb \cap \kk} (\gG/\lL))$ $=\{0\}$, where $\hw_{\bb\cap\kk}(\gG/\lL):= \{\alpha \in \Delta (\gG)\backslash\Delta(\lL) | \alpha + \delta \notin \Delta(\gG), \forall \delta \in\Delta(\bb)\cap\Delta(\kk)\}$ are the weights of the $\bb\cap\kk$-singular vectors of the $\kk$-module $\gG/\lL$.
\item[(b)] \emph{Centralizer condition}. We say that $\lL$ satisfies the centralizer condition if a (equivalently any) Levi subalgebra of the Lie algebra $C(\kk_{ss})\cap N(\nn)$ has simple constituents of type $A$ and $C$ only.
\end{itemize}
\end{defi}
\textbf{Remark.} The cone condition (a) holds as stated if and only if it holds with $\QQ$ replaced by $\ZZ$. 

\begin{theo}\label{thMainResult}

Let $\lL=\kk\crplus\nn$ be a root subalgebra  of $\gG\simeq\sL(n)$, $\so(2n+1)$, $\sP(2n)$, $\so(2n)$, $G_2$, $F_4$, $E_6$ or $E_7$. Then $\lL$ is a Fernando-Kac subalgebra of finite type if and only if $\lL$ satisfies the cone and centralizer conditions. 

\end{theo}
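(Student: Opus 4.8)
The plan is to prove the stated equivalence by treating each direction of each condition separately, after first reducing to simple $\gG$, which is legitimate because (as noted in the introduction) Penkov's conjecture is compatible with decomposing a semisimple subalgebra into simple ideals. Since every root subalgebra of $\gG$ is already known to be a Fernando-Kac subalgebra by \cite{PS:GenHarishChandra}, the content of the theorem is to decide precisely when a finite-type strict irreducible $(\gG,\lL)$-module exists, and I organize the argument as: (necessity of cone) failure of the cone condition forces infinite type; (necessity of centralizer) failure of the centralizer condition forces infinite type; and (sufficiency) the two conditions together allow one to construct a finite-type strict module.

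For the necessity of the cone condition I argue by contrapositive, via the purely combinatorial notion of an \emph{$\lL$-infinite weight} (Definition \ref{defStronglyPerp}), and split the work into two independent implications. First, following section \ref{secConeIntersections}, I would show that the existence of an $\lL$-infinite weight, which is equivalent to the existence of a suitable family of $\sL(2)$-subalgebras of $\gG$ with pairwise strongly orthogonal roots, allows one to exhibit in \emph{any} strict $(\gG,\lL)$-module $M$ a fixed finite-dimensional irreducible $\kk$-type occurring with unbounded multiplicity; the strongly orthogonal $\sL(2)$-triples commute appropriately and can be used to produce arbitrarily many independent copies of the $\kk$-type, so $M$ cannot be of finite type. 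Second, following section \ref{secBigTedious}, I would show that for $\gG\nsimeq E_8$ the failure of the cone condition actually produces such an $\lL$-infinite weight: a nonzero vector in $\Cone_{\QQ}(\Delta(\nn))\cap\Cone_{\QQ}(\hw_{\bb\cap\kk}(\gG/\lL))$ yields a nontrivial relation between roots of $\nn$ and $\bb\cap\kk$-singular weights, which I would reduce to a minimal such relation (Lemma \ref{leBigTedious}) and from which the required strongly orthogonal configuration can be read off.

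For the necessity of the centralizer condition I would invoke S. Fernando's theorem \cite{Fernando1} that a simple Lie algebra of type $B$ or $D$ admits no strict irreducible $(\gG,\hh)$-module of finite type. If the centralizer condition fails, then by \refle{leC(kss)} and the structure of $C(\kk_{ss})\cap N(\nn)$ its Levi part has a simple constituent of type $B$ or $D$; restricting a hypothetical finite-type strict $(\gG,\lL)$-module to this constituent, and checking that finite type and the relevant strictness pass to the restriction, contradicts Fernando's result. For the sufficiency I would show that any $\lL=\kk\crplus\nn$ satisfying both conditions provides admissible input data for the geometric-induction construction of Penkov, Serganova and Zuckerman \cite{PSZ}, which then outputs a strict irreducible $(\gG,\lL)$-module of finite type; the verification that $\lL$ has the parabolic-type structure this construction requires is exactly the content of \refprop{lePara}.

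I expect the decisive obstacle to be the combinatorial step of section \ref{secBigTedious}, namely converting the negation of the cone condition into an explicit strongly orthogonal $\sL(2)$-configuration. For the classical series $A$, $B$, $C$, $D$ this requires a complete classification of the minimal cone intersection relations in the explicit $\eps_i\pm\eps_j$ coordinates, and for $F_4$, $E_6$ and $E_7$ it requires an exhaustive finite search that is best carried out by computer; the same search for $E_8$ is presently infeasible, which is exactly why that case must be excluded from the statement.
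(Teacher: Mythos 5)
Your proposal is correct and follows essentially the same route as the paper's proof: cone failure forces infinite type via $\lL$-infinite weights (Lemma \ref{leNotFK} combined with the classification of minimal relations and the computer search of section \ref{secBigTedious}), centralizer failure forces infinite type via Fernando's theorem applied to a torsion-free module generated by a $\bb\cap\lL$-singular vector, and the two conditions together supply, through Proposition \ref{lePara}, the input data for the construction of \cite{PSZ}. The one imprecision is that for $\gG\simeq E_6,E_7$ a failing centralizer condition may yield a Levi constituent of type $E$ rather than $B$ or $D$; this is harmless, since the result of \cite{Fernando1} actually invoked (torsion-free modules exist only in types $A$ and $C$) covers every non-$A$, non-$C$ type.
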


We present the proof in \refsec{secTheProof}. 
Note that the criterion of Theorem \ref{thMainResult} is entirely combinatorial. This clearly applies to the cone condition. Checking the centralizer condition under the assumption that the cone condition holds is also an entirely combinatorial procedure. Indeed, in this latter case Proposition \ref{lePara} below gives that $ C(\kk_{ss})\cap N(\nn)= C(\kk_{ss}) \cap N(C(\kk_{ss})\cap\nn)$, i.e. $ C(\kk_{ss})\cap N(\nn)$ is a parabolic subalgebra of $C(\kk_{ss})$. Therefore checking the centralizer condition reduces to checking the type of the root subsystem $Q\cap -Q\subset\Delta(\gG)$, where $Q:=\{\alpha\in\Delta(\gG)~|~\alpha\sperp \Delta(\kk)$, such that for all $\beta\in\Delta(\nn)$ with $\beta\sperp\Delta(\kk)$, either $\alpha+\beta\in\Delta(\nn)$, or $\alpha+\beta$ is not a root $\}$.

In the case when $\kk=\hh$ (i.e. $\lL$ is solvable), the cone condition is equivalent to the requirement that $\nn$ be the nilradical of a parabolic subalgebra containing $\hh$ (see \cite[Prop. 4]{PS:GenHarishChandra} and also Lemma \ref{leCentralizerAndNilradical} below). Furthermore, using Corollary 5.5 and Theorem 5.8 from \cite{PSZ}, it is not difficult to show that when $\gG$ is of type $A$, the cone condition holds if and only if $\nn$ is the nilradical of a parabolic subalgebra of $\gG$ which contains $\kk$. This is not the case in type $B,C$, and $D$. Here is an example for type $C$.

\begin{example}\label{exsp6} $\gG=\sP(6)$, $\Delta(\kk)=\{\pma2\eps_2,\pma2\eps_3\}$, $\Delta(\nn)=2\eps_1, \eps_1+ \eps_2, \eps_1- \eps_2 $.  A computation shows that $\hw_{ \bb\cap\kk } (\gG/\lL))$ = $\{\eps_3+ \eps_1,\eps_2+ \eps_3, \eps_3- \eps_1, -\eps_1+ \eps_2,-2\eps_1\}$. Thus the cone condition is satisfied, but there exists no parabolic subalgebra $\pp$ with $\nn=\nn_\pp$. Indeed, assume the contrary. Then there exists a vector $t\in h$ with $\alpha(t)>0$ for all $\alpha \in \Delta (\nn)$, and $\alpha(t)\leq 0$ for all other roots $\alpha$ of $\gG$. Therefore $\eps_2(t)=\eps_3(t)=0$,  $\eps_1(t)>0$, and $(\eps_1+\eps_3)(t)>0$. Contradiction.
\end{example}

To illustrate the cone condition in the non-solvable case, we present all non-solvable root subalgebras that fail the cone condition in types $B_3$ and $C_3$ (Table \ref{tableC3B3} below). These subalgebras are, up to conjugation, all non-solvable root subalgebras of infinite type in types $B_3$ and $C_3$. Indeed, the centralizer condition is trivially satisfied in type $C_3$. In type $B_3$, the centralizer condition holds for $\lL\nsimeq \hh$, as the root system $B_2$ is isomorphic to $C_2$. Up to conjugation, in $\so(7)$ (respectively, $\sP(6)$), there are 11 (respectively, 16) non-solvable root subalgebras that fail the cone condition, and 32 (respectively, 38) non-solvable root subalgebras that satisfy it. 

\begin{longtable}{r|l}
\multicolumn{2}{c}{ $ \mathfrak{g}\simeq \mathrm{so(7)}$} \\\hline\hline\begin{tabular}{r}$\Delta(\mathfrak{k})$ is of type $A_1$+$A_1$; \\ $\Delta^+(\mathfrak{k})=$ $\varepsilon_{1}$-$\varepsilon_{2}$, $\varepsilon_{1}$+$\varepsilon_{2}$\end{tabular} &
\begin{tabular}{l}$\Delta(\mathfrak{n})=$$\varepsilon_{1}$+$\varepsilon_{3}$, $\varepsilon_{2}$+$\varepsilon_{3}$, -$\varepsilon_{2}$+$\varepsilon_{3}$, -$\varepsilon_{1}$+$\varepsilon_{3}$, 
\end{tabular}

\\\hline\begin{tabular}{r}$\Delta(\mathfrak{k})$ is of type $A_1$; \\ $\Delta^+(\mathfrak{k})=$ $\varepsilon_{3}$\end{tabular} &
\begin{tabular}{l}$\Delta(\mathfrak{n})=$$\varepsilon_{1}$-$\varepsilon_{2}$, $\varepsilon_{1}$+$\varepsilon_{3}$, $\varepsilon_{1}$-$\varepsilon_{3}$, $\varepsilon_{1}$
\\$\Delta(\mathfrak{n})=$$\varepsilon_{1}$+$\varepsilon_{3}$, $\varepsilon_{1}$-$\varepsilon_{3}$, $\varepsilon_{1}$

\\\end{tabular}

\\\hline\begin{tabular}{r}$\Delta(\mathfrak{k})$ is of type $A_1$; \\ $\Delta^+(\mathfrak{k})=$ $\varepsilon_{1}$-$\varepsilon_{2}$\end{tabular} &
\begin{tabular}{l}$\Delta(\mathfrak{n})=$$\varepsilon_{3}$, -$\varepsilon_{2}$+$\varepsilon_{3}$, -$\varepsilon_{1}$+$\varepsilon_{3}$
\\$\Delta(\mathfrak{n})=$$\varepsilon_{1}$+$\varepsilon_{2}$, $\varepsilon_{1}$-$\varepsilon_{3}$, $\varepsilon_{2}$-$\varepsilon_{3}$, $\varepsilon_{1}$+$\varepsilon_{3}$, $\varepsilon_{2}$+$\varepsilon_{3}$

\\$\Delta(\mathfrak{n})=$$\varepsilon_{1}$+$\varepsilon_{2}$, $\varepsilon_{1}$-$\varepsilon_{3}$, $\varepsilon_{2}$-$\varepsilon_{3}$, -$\varepsilon_{2}$-$\varepsilon_{3}$, -$\varepsilon_{1}$-$\varepsilon_{3}$
\\$\Delta(\mathfrak{n})=$$\varepsilon_{1}$+$\varepsilon_{2}$, $\varepsilon_{1}$-$\varepsilon_{3}$, $\varepsilon_{2}$-$\varepsilon_{3}$, $\varepsilon_{1}$, $\varepsilon_{2}$

\\$\Delta(\mathfrak{n})=$$\varepsilon_{1}$+$\varepsilon_{2}$, $\varepsilon_{1}$-$\varepsilon_{3}$, $\varepsilon_{2}$-$\varepsilon_{3}$
\\$\Delta(\mathfrak{n})=$$\varepsilon_{1}$+$\varepsilon_{2}$, $\varepsilon_{1}$, $\varepsilon_{2}$

\\$\Delta(\mathfrak{n})=$$\varepsilon_{1}$-$\varepsilon_{3}$, $\varepsilon_{2}$-$\varepsilon_{3}$, -$\varepsilon_{2}$-$\varepsilon_{3}$, -$\varepsilon_{1}$-$\varepsilon_{3}$
\\$\Delta(\mathfrak{n})=$$\varepsilon_{1}$-$\varepsilon_{3}$, $\varepsilon_{2}$-$\varepsilon_{3}$

\\\end{tabular}

\\\hline\hline

\multicolumn{2}{c}{ $\mathfrak{g}\simeq \mathrm{sp(6)}$} \\\hline\hline\begin{tabular}{r}$\Delta(\mathfrak{k})$ is of type $A_1$; \\ $\Delta^+(\mathfrak{k})=$ -$\varepsilon_{2}$+$\varepsilon_{3}$\end{tabular} &
\begin{tabular}{l}$\Delta(\mathfrak{n})=$2$\varepsilon_{1}$, 2$\varepsilon_{3}$, 2$\varepsilon_{2}$, $\varepsilon_{2}$+$\varepsilon_{3}$
\\$\Delta(\mathfrak{n})=$2$\varepsilon_{1}$, -2$\varepsilon_{2}$, -2$\varepsilon_{3}$, -$\varepsilon_{2}$-$\varepsilon_{3}$

\\$\Delta(\mathfrak{n})=$$\varepsilon_{1}$+$\varepsilon_{3}$, $\varepsilon_{1}$+$\varepsilon_{2}$, 2$\varepsilon_{3}$, 2$\varepsilon_{2}$, $\varepsilon_{2}$+$\varepsilon_{3}$
\\$\Delta(\mathfrak{n})=$$\varepsilon_{1}$+$\varepsilon_{3}$, $\varepsilon_{1}$+$\varepsilon_{2}$

\\$\Delta(\mathfrak{n})=$2$\varepsilon_{3}$, 2$\varepsilon_{2}$, $\varepsilon_{2}$+$\varepsilon_{3}$
\\$\Delta(\mathfrak{n})=$$\varepsilon_{1}$-$\varepsilon_{2}$, $\varepsilon_{1}$-$\varepsilon_{3}$, -2$\varepsilon_{2}$, -2$\varepsilon_{3}$, -$\varepsilon_{2}$-$\varepsilon_{3}$

\\$\Delta(\mathfrak{n})=$$\varepsilon_{1}$-$\varepsilon_{2}$, $\varepsilon_{1}$-$\varepsilon_{3}$
\\$\Delta(\mathfrak{n})=$-2$\varepsilon_{2}$, -2$\varepsilon_{3}$, -$\varepsilon_{2}$-$\varepsilon_{3}$

\\
\end{tabular}

\\\hline\begin{tabular}{r}$\Delta(\mathfrak{k})$ is of type $A_1$; \\ $\Delta^+(\mathfrak{k})=$ 2$\varepsilon_{1}$\end{tabular} &
\begin{tabular}{l}$\Delta(\mathfrak{n})=$-$\varepsilon_{2}$+$\varepsilon_{3}$, 2$\varepsilon_{3}$, -2$\varepsilon_{2}$, $\varepsilon_{1}$+$\varepsilon_{3}$, -$\varepsilon_{1}$+$\varepsilon_{3}$
\\$\Delta(\mathfrak{n})=$-$\varepsilon_{2}$+$\varepsilon_{3}$, 2$\varepsilon_{3}$, $\varepsilon_{1}$+$\varepsilon_{3}$, -$\varepsilon_{1}$+$\varepsilon_{3}$

\\$\Delta(\mathfrak{n})=$-$\varepsilon_{2}$+$\varepsilon_{3}$, 2$\varepsilon_{3}$
\\$\Delta(\mathfrak{n})=$-$\varepsilon_{2}$+$\varepsilon_{3}$

\\$\Delta(\mathfrak{n})=$2$\varepsilon_{2}$, 2$\varepsilon_{3}$, $\varepsilon_{1}$+$\varepsilon_{2}$, -$\varepsilon_{1}$+$\varepsilon_{2}$
\\$\Delta(\mathfrak{n})=$2$\varepsilon_{2}$, 2$\varepsilon_{3}$

\\$\Delta(\mathfrak{n})=$2$\varepsilon_{2}$, $\varepsilon_{1}$+$\varepsilon_{2}$, -$\varepsilon_{1}$+$\varepsilon_{2}$
\\$\Delta(\mathfrak{n})=$2$\varepsilon_{2}$\quad .

\\\end{tabular}

\\
\multicolumn{2}{c}{Table  \refstepcounter{theo} \label{tableC3B3} \ref{tableC3B3} }
\\
\multicolumn{2}{c}{Non-solvable root subalgebras that fail the cone condition in types $B_3$ and $C_3$.}
\end{longtable}

\section{A sufficient condition for infinite type}
\label{secConeIntersections}
Let $\lL=\kk\crplus\nn$ be a root subalgebra, let $M$ be a $(\gG,\lL)$-module. For every root $\alpha\in\Delta(\gG)$ choose a non-zero vector $g^\alpha\in\gG^\alpha$ such that $[g^{\alpha},g^{-\alpha}]=h^\alpha$, where $h^{\alpha}$ is the element of $\hh$ for which $[h^{\alpha},g^\beta]=\langle\alpha,\beta \rangle g^{\beta}$ for all $\beta\in\Delta(\gG)$.

By Lie's theorem, there exists an $\bb\cap\lL$-singular vector $v$ in $M$. Suppose that there exist roots $\alpha_i\in\hw_{\bb\cap\kk}(\gG/\lL)$ and $\beta_i\in\Delta(\nn)$, as well as numbers $a_i,b_j\in\ZZ_{>0}$, such that the vectors of the form $\left((g^{-\beta_1})^{b_1}\dots (g^{-\beta_k})^{b_k}\right)^t$ $\left((g^{\alpha_1})^{a_1}\dots (g^{\alpha_l})^{a_l}\right)^t$  $\cdot v $ for $t\in\ZZ_{>0}$ have the following three properties. First, these vectors have the same $\hh$-weight; second, they are linearly independent; third, each of them projects naturally to a $\bb\cap\lL$-singular vector in an appropriate $\lL$-subquotient of $M$. If all three properties hold, then $M$ is a $(\gG,\lL)$-module of infinite type as the irreducible $\lL$-module with highest weight equal to the weight of $v$ has infinite multiplicity in $M$.

The above summarizes our approach for proving that the failure of the cone condition implies $\lL$ is a Fernando-Kac subalgebra of infinite type. The present section establishes that the three properties in question hold under an additional assumption. 
In section \ref{secBigTedious} we prove that this additional assumption is satisfied whenever the cone condition fails.

\label{secAbundanceCondition} \label{secMultiplicities}
\begin{defi}\label{defStronglyPerp}\label{defKmult}~
\begin{itemize}
\item Let $I$ be a set of roots and $\omega$ be a weight. We say that $\omega$ \emph{has a strongly orthogonal decomposition with respect to $I$} if there exist roots  $\beta_i\in I$ and positive integers $b_i$ such that $\omega=b_1\beta_1+\dots +b_k\beta_k$ and $\beta_i\sperp\beta_j$ for all $i,j$. 
\item Fix $\lL=\kk\crplus\nn\subset\gG$ with $\nn\subset\bb$. Let $\omega$ be a weight. We say that $\omega$ is \emph{two-sided with respect to $\lL$}, or simply \emph{two-sided}, if the following two conditions hold:
\begin{itemize}
\item $\omega\in \Cone_{\ZZ}(\hw_{\bb\cap\kk}(\gG/\lL))\cap\Cone_{\ZZ}(\Delta(\nn))\backslash\{0\}$, i.e. there exist $a_i\in\ZZ_{>0}$, $\alpha_i \in \hw_{\bb\cap\kk} (\gG/\lL)$, $b_i\in\ZZ_{>0}$ and $\beta_i\in\Delta(\nn)$ with 
\begin{equation}\label{eqRelation}
\omega=\sum_{i=1}^la_i\alpha_i=\sum_{i=1}^k b_i\beta_i;
\end{equation}
\item among all expressions for $\omega$ of type \refeq{eqRelation}, there exists one for which $[\gG^{\alpha_1},\nn]\subset\nn,\dots, [\gG^{\alpha_l},\nn]\subset\nn$. 
\end{itemize}
\item Let $\omega$ be a weight. If $\omega$ is both two-sided and has a strongly orthogonal decomposition with respect to $\Delta(\nn)$, we say that $\omega$ is \emph{$\lL$-strictly infinite}.
\item If for a given weight $\omega$ there exists a root subalgebra $\tT$ containing $\kk$, such that $\omega$ is $\lL'$-strictly infinite in $\tT$, where $\lL':=\lL\cap\tT=\kk\crplus(\tT\cap\nn)$, we say that $\omega$ is \emph{$\lL$-infinite}.
\end{itemize}
\end{defi}


\begin{lemma}\label{leNilradicalIsSeparate}
Given $\lL=\kk\crplus\nn\subset\gG$, there exists $h\in\hh$ such that $\gamma(h)=0$ for all $\gamma\in\Delta(\kk)$ and $\beta(h)>0$ for all $\beta\in\Delta(\nn)$.
\end{lemma}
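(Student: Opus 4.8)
The goal is to produce an element of the subspace $\hh_1=\{h\in\hh\mid\gamma(h)=0\ \forall\gamma\in\Delta(\kk)\}$ (in the notation of Lemma \ref{leC(kss)}) on which every root of $\nn$ is strictly positive. My plan is an averaging argument over the Weyl group $\WW_\kk$ of $\kk$: first I would find a vector $h_0\in\hh$, \emph{without} the constraint $h_0\in\hh_1$, on which all of $\Delta(\nn)$ is positive, and then symmetrize it over $\WW_\kk$ to move it into $\hh_1$ while preserving positivity on $\Delta(\nn)$. For this to work I must establish two structural properties of the root set $\Delta(\nn)$: that it is closed and contains no antipodal pair, and that it is stable under $\WW_\kk$.

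First I would record that $\Delta(\nn)$ is closed and antipodal-free. Antipodal-freeness is immediate from the description $\Delta(\nn)=\{\alpha\in\Delta(\lL)\mid-\alpha\notin\Delta(\lL)\}$. For closedness, if $\beta,\beta'\in\Delta(\nn)$ and $\beta+\beta'\in\Delta(\gG)$, then $\beta+\beta'\in\Delta(\lL)$ because $\lL$ is a subalgebra; were $-(\beta+\beta')$ also in $\Delta(\lL)$, bracketing $\gG^{-(\beta+\beta')}$ with $\gG^{\beta}$ would place $-\beta'\in\Delta(\lL)$, contradicting $\beta'\in\Delta(\nn)$. Hence $\beta+\beta'\in\Delta(\nn)$. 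The $\WW_\kk$-stability is where the ideal structure of $\nn$ enters: since $\nn$ is an ideal of $\lL$, it is a $\kk$-submodule, so for $\gamma\in\Delta(\kk)$ and $\beta\in\Delta(\nn)$ the entire $\gamma$-root string through $\beta$ lies in $\Delta(\nn)$ (both $\ad\gG^{\gamma}$ and $\ad\gG^{-\gamma}$ preserve $\nn$). In particular the reflection $s_\gamma(\beta)$, being a member of that string, lies in $\Delta(\nn)$; as the $s_\gamma$ generate $\WW_\kk$, the set $\Delta(\nn)$ is $\WW_\kk$-stable.

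Given these two facts, the existence of $h_0\in\hh$ with $\beta(h_0)>0$ for all $\beta\in\Delta(\nn)$ follows from the classical fact that a closed, antipodal-free set of roots is contained in the positive roots of some choice of positivity (equivalently, a nilpotent $\hh$-stable subalgebra is separated from its opposite by a linear form). I would then set
\[
h:=\frac{1}{|\WW_\kk|}\sum_{w\in\WW_\kk}w(h_0),
\]
identifying $\hh$ with $\hh^*$ via $\langle\bullet,\bullet\rangle$. This $h$ is $\WW_\kk$-invariant, so $s_\gamma(h)=h$ and hence $\gamma(h)=0$ for every $\gamma\in\Delta(\kk)$, i.e. $h\in\hh_1$. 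For $\beta\in\Delta(\nn)$ one computes $\beta(h)=|\WW_\kk|^{-1}\sum_{w}(w^{-1}\beta)(h_0)$, and each $w^{-1}\beta$ again lies in $\Delta(\nn)$ by $\WW_\kk$-stability, so every summand is positive and $\beta(h)>0$.

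The genuinely load-bearing steps are the $\WW_\kk$-stability of $\Delta(\nn)$ and the existence of the preliminary separating vector $h_0$; the averaging itself is then purely formal. I expect the existence of $h_0$ to be the step that most deserves care, since it is exactly the statement that the nilradical does not ``wrap around'' the origin. It can be proven directly from closedness and antipodal-freeness using that $\langle\beta_0,\beta_1\rangle<0$ forces $\beta_0+\beta_1$ to be a root, which lets one collapse any hypothetical nonnegative relation $\sum c_\beta\beta=0$ among roots of $\nn$ until it contradicts antipodal-freeness.
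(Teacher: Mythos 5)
Your proof is correct, but it takes a genuinely different route from the paper's. The paper first uses solvability of $\hh\crplus\nn$ to place $\nn$ inside a Borel subalgebra, picks a regular dominant element $h'$, and subtracts its ``$\kk$-component'' $h''$ (defined by $\gamma(h'')=\gamma(h')$ for $\gamma\in\Delta(\kk)$ and by vanishing on $\Delta(\kk)^\perp$); positivity of $h=h'-h''$ on $\Delta(\nn)$ is then checked one irreducible $\kk$-submodule $\nn'\subset\nn$ at a time: $h$ is constant, say equal to $r$, on $\Delta(\nn')$, and since the weights of a finite-dimensional $\kk_{ss}$-module sum to zero, $\lambda:=\sum_{\beta\in\Delta(\nn')}\beta$ lies in $\Delta(\kk)^\perp$, whence $\#(\Delta(\nn'))\,r=\lambda(h)=\lambda(h')>0$. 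You instead establish that $\Delta(\nn)$ is closed, antipodal-free, and stable under the Weyl group $\WW_\kk$ of $\kk$ (via root strings and the ideal property), extract an initial separating vector $h_0$ from the first two properties, and average over $\WW_\kk$. It is worth noting that both constructions are in fact the orthogonal projection of a separating vector onto $\hh_1$ (averaging over $\WW_\kk$ is exactly the projection onto the $\WW_\kk$-fixed subspace, which is $\hh_1$); the real difference lies in how one certifies that the projection still separates. The paper uses the zero-sum-of-weights fact for the irreducible $\kk$-constituents of $\nn$, while you use $\WW_\kk$-invariance of $\Delta(\nn)$, so that every Weyl translate of a root of $\nn$ is again a root of $\nn$ and every summand in the average is positive. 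Your route is more self-contained at the level of root combinatorics, and the $\WW_\kk$-stability of $\Delta(\nn)$ is a reusable structural fact; the paper's route is shorter because the Borel embedding hands it the initial separating vector for free and its representation-theoretic input is standard. The one step you leave at sketch level --- existence of $h_0$ from closedness and antipodal-freeness --- is indeed the classical fact that a closed set of roots $P$ with $P\cap(-P)=\emptyset$ lies in a positive system, and your collapsing argument is the standard proof of it (one needs only a termination device, e.g.\ clearing denominators and inducting on the total integer mass), so the remaining gap is one of detail, not of substance.
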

\begin{proof}
Since $\hh\crplus\nn$ is a solvable Lie algebra, it lies in a maximal solvable (i.e. Borel) subalgebra; assume without loss of generality that this Borel subalgebra is $\bb$. Fix $h'\in\hh$ such that $\gamma(h')>0$ for all $\gamma\in \Delta^+(\gG)$. Let $h''\in\hh$ be defined by $\gamma(h''):=\gamma(h')$ for all $\gamma\in\Delta(\kk)$ and $\alpha(h'')=0$ for all weights $\alpha \in \Delta(\kk)^\perp$. Set $h:=h'-h''$. 

We claim that $h$ has the properties stated in the lemma. Indeed, let $\nn'\subset\nn$ be a $\kk$-submodule of $\nn$. Since $\gamma(h)=0$ for all $\alpha \in \Delta(\kk)$, the value $r:=\beta(h)$ is the same for all roots $\beta\in\Delta(\nn')$. Our statement is now equivalent to showing that $r>0$. Assume on the contrary that $r\leq 0$. Let the sum of the weights of $\Delta(\nn')$ be  $\lambda$, i.e. $\lambda:=\sum_{ \beta_i \in \Delta(\nn')}\beta_i$. Then $\lambda(h)= \#(\Delta(\nn'))r\leq 0$. On the other hand, the sum of the weights of a finite-dimensional $\kk_{ss}$-module always equals zero, i.e. $\lambda\in \Delta(\kk)^\perp$. Therefore $\lambda(h)=\lambda(h')-\lambda(h'')=\lambda(h')>0$, contradiction.
\end{proof}

For an arbitrary weight $\mu\in\hh^*$, denote by $L_{\mu}(\kk)$ (respectively, $L_{\mu}(\gG)$) the irreducible highest weight $\kk$-module (respectively, $\gG$-module) with $\bb\cap\kk$-highest (respectively, $\bb$-highest) weight $\mu$.
\begin{lemma}\label{leMultMain}
Let $\omega$ be a weight that has a strongly orthogonal decomposition with respect to $\Delta(\nn)$. Let $\lambda\in\hh^*$ be an arbitrary $\bb\cap\kk$-dominant and $\kk$-integral weight. Then there exists a number $t_0$ such that, for any $t>t_0$ and any $\gG$-module $M$ that has a $(\bb\cap\kk)\crplus\nn$-singular vector $v$ of weight $\lambda+t\omega$, it follows that $M$ has a $\kk$-subquotient in which there is a non-zero $\bb\cap\kk$-singular vector $\tilde w$ of weight $\lambda$.
\end{lemma}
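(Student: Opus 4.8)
The plan is to turn the strongly orthogonal decomposition $\omega=\sum_{i=1}^k b_i\beta_i$ (with $\beta_i\in\Delta(\nn)$ and $\beta_i\sperp\beta_j$) into a family of pairwise commuting $\sL(2)$-triples, to lower the singular vector $v$ along them down to $\hh$-weight $\lambda$, and to project the result into a suitable $\kk$-subquotient. Strong orthogonality forces $[g^{\pm\beta_i},g^{\pm\beta_j}]=0$ for $i\ne j$, so the triples $\sS_i:=\langle g^{\beta_i},h^{\beta_i},g^{-\beta_i}\rangle$ commute. Using \refle{leNilradicalIsSeparate} I fix $h\in\hh$ with $\gamma(h)=0$ for all $\gamma\in\Delta(\kk)$ and $\beta(h)>0$ for all $\beta\in\Delta(\nn)$, and grade $U(\gG)v$ (replacing $M$ by $U(\gG)v$, a weight module) by the $h$-eigenvalue. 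As $\Delta(\kk)$ is $h$-null, $\kk$ preserves each graded piece, while each $g^{-\beta_i}$ strictly lowers the degree. I set $w:=\prod_{i=1}^k(g^{-\beta_i})^{tb_i}v$, which has $\hh$-weight $\lambda$ and $h$-degree $\lambda(h)$.

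First I would show $w\ne0$ for $t\gg0$. Since $\beta_i\in\Delta(\nn)$ and $v$ is $\nn$-singular, $g^{\beta_i}v=0$, so $v$ is an $\sS_i$-highest weight vector of $\sS_i$-weight $\langle\beta_i,\lambda\rangle+tb_i\langle\beta_i,\beta_i\rangle$ (strong orthogonality kills the cross terms), a quantity linear and increasing in $t$. By $\sL(2)$ theory the lowering coefficients occurring in $(g^{\beta_i})^{tb_i}(g^{-\beta_i})^{tb_i}v=\kappa_i v$ are nonzero as soon as $t$ exceeds a bound depending only on $b_i,\langle\beta_i,\lambda\rangle,\langle\beta_i,\beta_i\rangle$; I let $t_0$ be the largest such bound. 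Because the $\sS_i$ commute, the ``detector'' $D:=\prod_i(g^{\beta_i})^{tb_i}$ satisfies $Dw=\big(\prod_i\kappa_i\big)v\ne0$, whence $w\ne0$.

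Then I would produce the subquotient. The degree-$\lambda(h)$ piece $M_{\lambda(h)}$ is a $\kk$-module containing $w$; inside $V_1:=U(\kk)w$ I take the $\kk$-submodule $V_2:=\sum_{\gamma\in\Delta^+(\kk)}U(\kk)g^\gamma w$. By construction the image $\bar w$ of $w$ in $V_1/V_2$ is $\bb\cap\kk$-singular of weight $\lambda$, so everything reduces to $\bar w\ne0$, i.e. $w\notin V_2$. The structural fact I would exploit is $[\kk,\nn]\subseteq\nn$: for $\gamma\in\Delta^+(\kk)$ one has $[g^\gamma,g^{-\beta_i}]\in\gG^{-(\beta_i-\gamma)}$ with $\beta_i-\gamma\in\Delta(\nn)$ whenever it is a root; commuting $g^\gamma$ through the lowering monomial and using $g^\gamma v=0$ then shows $g^\gamma w\in U(\nn^-)v$, where $\nn^-:=\bigoplus_{\beta\in\Delta(\nn)}\gG^{-\beta}$, so $V_2\subseteq U(\kk)U(\nn^-)v$. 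Applying the detector and commuting it past a raising operator gives $Dg^\gamma=g^\gamma D+(\text{lower-order }\nn\text{-raising terms})$, whose leading part yields $g^\gamma Dw=c\,g^\gamma v=0$; controlling the remaining terms by a length filtration on $U(\nn^-)$ and taking $t\gg0$, I would show that $D$ sends $V_2\cap M_\lambda$ into a subspace avoiding $\CC v=\CC Dw$, forcing $w\notin V_2$.

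The non-vanishing of $w$ and the formal $\bb\cap\kk$-singularity of $\bar w$ are routine; the hard part is the final claim $\bar w\ne0$ --- equivalently, that the explicit lowering monomial defining $w$ cannot be recreated by first raising $v$ along $\Delta^+(\kk)$ and then lowering back. This is precisely where the detector $D$ and the largeness of $t$ are indispensable, and where the PBW/leading-term bookkeeping of $\nn^-$-monomials (kept in hand by $[\kk,\nn]\subseteq\nn$) must be carried out carefully.
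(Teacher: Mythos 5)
Your first half is correct and coincides with the paper's own argument: strong orthogonality makes the triples $(g^{\beta_i},h^{\beta_i},g^{-\beta_i})$ commute, $v$ is a highest weight vector for each of them, and the explicit $\sL(2)$-computation showing that $Dw=u^{t}\bar u^{t}\cdot v$ is a nonzero (positive) multiple of $v$ for $t\gg 0$ is exactly how the paper proves $w\neq 0$. Your reduction of the lemma to the single claim $w\notin V_2:=\sum_{\gamma\in\Delta^{+}(\kk)}U(\kk)g^{\gamma}w$ is also sound. The genuine gap is your final step, and it is not a bookkeeping issue: the claim that $D$ maps the weight-$\lambda$ part of $V_2$ into a subspace meeting $\CC v$ only in $0$ is false. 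Take $\gG=\sL(3)$ with simple roots $\alpha_1,\alpha_2$ and $\theta=\alpha_1+\alpha_2$, $\Delta(\kk)=\{\pm\alpha_1\}$, $\Delta(\nn)=\{\alpha_2,\theta\}$, $\omega=\theta$, $\lambda=0$, and let $M$ be the Verma module of highest weight $\Lambda=\lambda+t\omega$ with highest weight vector $v$ (which is $(\bb\cap\kk)\crplus\nn$-singular). Here $g^{\alpha_1}w$ is a nonzero multiple of $g^{-\alpha_2}(g^{-\theta})^{t-1}v$, so $z:=g^{-\alpha_1}g^{\alpha_1}w$ is a nonzero vector of weight $\lambda$ lying in $V_2$; yet a computation of the very same $\sL(2)$ type as your detector computation shows that $Dz$ is a nonzero multiple of $v$ for all large $t$ (for $t=1$ one gets $Dz=\pm\langle\alpha_2,\lambda+\omega\rangle v$, and the coefficient grows with $t$). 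Since the $\Lambda$-weight space of $M$ is $\CC v$, this means $D$ maps the weight-$\lambda$ part of $V_2$ onto $\CC v=\CC Dw$: the detector simply cannot distinguish $w$ from $V_2$, and enlarging $t$ makes the offending coefficients larger, not smaller. The phenomenon is general: for every $\gamma\in\Delta^{+}(\kk)$ the vector $g^{-\gamma}g^{\gamma}w$ lies in $V_2$, has weight $\lambda$, and nothing forces $D$ to annihilate it; this ``raise by $\kk$, then lower back'' contribution is exactly what your commutation formula $Dg^{\gamma}=g^{\gamma}D+\cdots$ never sees.

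This failing step is also precisely where you part ways with the paper. The paper uses the detector only to prove $w\neq 0$; the non-membership is attacked by a different mechanism: it enlarges your $V_2$ to the $\kk$-module $N=U(\kk)\cdot A\cdot v$, where $A\subset U(\nn^{-})$ is the span of all monomials in the $g^{-\delta}$, $\delta\in\Delta(\nn)$, of weight strictly higher than $-t\omega$, so that $\bb\cap\kk$-singularity of the image of $w$ in $M/N$ is automatic, and then argues $w\notin N$ purely in terms of weights, separating $\lambda$ from the weights of $N$ via the functional of \refle{leNilradicalIsSeparate}. (Be aware that this separation is the truly delicate point of the lemma: the vector $z$ above lies in $N$ as well and has weight $\lambda$, so any correct argument must cope with a nonzero weight-$\lambda$ subspace of the quotienting module; a pure detector argument cannot do this.) There is a second structural problem with your plan: since the lemma quantifies over arbitrary $M$, an argument resting on linear independence of $U(\nn^{-})$-monomials applied to $v$ --- your ``length filtration'' --- can at best establish $w\notin V_2$ inside the induced module $U(\gG)\otimes_{U((\bb\cap\kk)\crplus\nn)}\CC_{\lambda+t\omega}$, and a subquotient exhibited there does not descend to a subquotient of an arbitrary quotient $M$; only quotient-stable arguments (for instance, weight arguments) can close this. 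As it stands, the crucial step of your proof is not only unproved but your proposed method for it provably fails.
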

Before we proceed with the proof we state the following.
\begin{corollary}
\label{corMultMain}
Let $\gG$, $\bb$, $\lL=\kk\crplus\nn$, $\lambda$ and $\omega$ be as above. Then there exists $t_0$ such that for any $t>t_0$ and any $(\gG,\kk)$-module that has a $(\bb\cap\kk)\crplus\nn$-singular vector $v\in M$ of weight $\lambda+t\omega$, it follows that $M$ has non-zero multiplicity of $L_\lambda(\kk)$. In particular, the existence of a $(\gG,\kk)$-module with the required singular vector implies that $\omega$ is $\bb\cap\kk$-dominant.
\end{corollary}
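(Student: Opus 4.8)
The plan is to deduce the corollary directly from \refle{leMultMain}, which already carries all of the analytic content; what remains is to repackage its conclusion as a multiplicity statement and to read off the dominance of $\omega$. Throughout I use that a $(\gG,\kk)$-module is in particular a $\gG$-module on which $\kk$ acts locally finitely, so that local finiteness over $\kk$ passes to every subquotient.

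First I would apply \refle{leMultMain} to the weights $\omega$ and $\lambda$, which satisfy its hypotheses by assumption, obtaining a constant $t_0$. For $t>t_0$ and for the module $M$ equipped with its $(\bb\cap\kk)\crplus\nn$-singular vector $v$ of weight $\lambda+t\omega$, the lemma produces a $\kk$-subquotient $S=A/B$ of $M$ (with $B\subseteq A\subseteq M$ being $\kk$-submodules) that contains a non-zero $\bb\cap\kk$-singular vector $\tilde w$ of weight $\lambda$. Since $\kk$ acts locally finitely on $S$, the vector $\tilde w$ lies in a finite-dimensional $\kk$-submodule and, being $\bb\cap\kk$-singular, generates a highest weight $\kk$-module of highest weight $\lambda$; as $\lambda$ is $\bb\cap\kk$-dominant and $\kk$-integral, the unique irreducible quotient of this module is the finite-dimensional $L_\lambda(\kk)$. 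Hence $L_\lambda(\kk)$ is a Jordan--H\"older constituent of $S$.

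Next I would lift this constituent from the subquotient $S$ back to a finite-dimensional submodule of $M$. Choose a preimage $a\in A$ of $\tilde w$ and set $N:=U(\kk)a$, which is a finite-dimensional $\kk$-submodule of $M$ by local finiteness and satisfies $N\subseteq A$. Its image $\bar N=(N+B)/B\subseteq S$ is a quotient of $N$ containing $\tilde w$, hence contains $U(\kk)\tilde w$ and so admits $L_\lambda(\kk)$ as a constituent. By additivity of Jordan--H\"older multiplicities, $[N:L_\lambda(\kk)]\geq[\bar N:L_\lambda(\kk)]\geq 1$, so $L_\lambda(\kk)$ occurs with non-zero multiplicity in the finite-dimensional submodule $N\subseteq M$, which is exactly the assertion that $M$ has non-zero multiplicity of $L_\lambda(\kk)$.

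For the final ``in particular'' statement I would argue directly, without even invoking the lemma: by local finiteness the $\bb\cap\kk$-singular vector $v$ generates a finite-dimensional highest weight $\kk$-module, so its highest weight $\lambda+t\omega$ must be $\bb\cap\kk$-dominant, i.e.\ $\langle\lambda+t\omega,\alpha^\vee\rangle\geq 0$ for every simple coroot $\alpha^\vee$ of $\kk$ and every $t>t_0$. Since $\lambda$ is fixed while $t$ ranges over arbitrarily large values, each $\langle\omega,\alpha^\vee\rangle$ is forced to be non-negative, so $\omega$ is $\bb\cap\kk$-dominant. There is no deep obstacle here once \refle{leMultMain} is available; the only point requiring care is the passage from a singular vector in a subquotient to a non-zero multiplicity in a genuine finite-dimensional submodule of $M$, which is precisely the lifting step above and rests on local finiteness of the $(\gG,\kk)$-structure together with additivity of Jordan--H\"older multiplicities.
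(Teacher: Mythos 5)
Your argument is correct, and it is essentially the proof the paper leaves implicit: \refcor{corMultMain} is stated without proof, as an immediate consequence of \refle{leMultMain}, and your route --- apply the lemma to obtain a $\kk$-subquotient $A/B$ with a non-zero $\bb\cap\kk$-singular vector $\tilde w$ of weight $\lambda$, observe that $U(\kk)\tilde w$ is a finite-dimensional highest weight $\kk$-module whose unique irreducible quotient is $L_\lambda(\kk)$, and then lift the multiplicity to the finite-dimensional $\kk$-submodule $U(\kk)a\subset M$ for a preimage $a$ of $\tilde w$ --- is the intended one. The lifting step is the right thing to make explicit, since the paper's notion of multiplicity is phrased via finite-dimensional $\kk$-submodules of $M$; and your appeal to local finiteness is legitimate, because $\kk\subset\gG[M]$ does force $\dim U(\kk)m<\infty$ for every $m\in M$ (by PBW, $U(\kk)=\CC[x_1]\CC[x_2]\cdots\CC[x_n]$ as a span of ordered monomials in a basis of $\kk$, and each factor $\CC[x_i]$ carries a finite-dimensional subspace to a finite-dimensional subspace when $x_i$ acts locally finitely).

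The only point to tighten is the ``in particular'' clause. As you argue it, the $\bb\cap\kk$-dominance of $\omega$ is extracted by letting $t$ range over arbitrarily large values, i.e. you assume the required singular vectors exist for unboundedly many $t$. That is precisely the situation in which the corollary is applied (in the proof of Lemma \ref{leNotFK} the vectors $v_{\lambda+t\omega}$ exist for all $t\in\ZZ_{>0}$), so nothing is lost for the paper's purposes; but the statement as written posits a single $(\gG,\kk)$-module with a single such vector, for one $t>t_0$. The repair is immediate and worth recording: $\omega$ has a strongly orthogonal decomposition with respect to $\Delta(\nn)$, so $\omega\in\Cone_{\ZZ}(\Delta(\nn))$ lies in the root lattice and $2\langle\omega,\alpha\rangle/\langle\alpha,\alpha\rangle\in\ZZ$ for every simple root $\alpha$ of $\bb\cap\kk$. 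Hence, after enlarging $t_0$ so that $t_0\geq 2\langle\lambda,\alpha\rangle/\langle\alpha,\alpha\rangle$ for all such $\alpha$, the dominance and integrality of the single weight $\lambda+t\omega$ (which you correctly deduce from local finiteness of the $\kk$-action on the module generated by $v$) already force $2\langle\omega,\alpha\rangle/\langle\alpha,\alpha\rangle\geq 0$ for each $\alpha$: otherwise $\langle\lambda+t\omega,\alpha\rangle\leq\langle\lambda,\alpha\rangle-t\langle\alpha,\alpha\rangle/2<0$. So a single admissible $t$ suffices, and $\omega$ is $\bb\cap\kk$-dominant.
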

\textbf{Proof of \refle{leMultMain}.}Let $\nn^-$ be the subalgebra generated by the root spaces opposite to the root spaces of $\nn$. Let $b_1\beta_1+\dots +b_k\beta_k=\omega$ be a strongly orthogonal decomposition of $\omega$ with respect to $\Delta(\nn)$ (\refdef{defKmult}). Let $u:=(g^{\beta_1 })^{b_1}\dots (g^{ \beta_k})^{b_k} \in U(\nn)$ and $\bar u:= (g^{-\beta_1 })^{b_1}\dots (g^{ -\beta_k})^{b_k}$. 

Let $A$ be the linear subspace of $U(\nn^-)$ generated by all possible monomials $g^{-\gamma_1}\dots g^{-\gamma_k}$ that have strictly higher weight than $-t\omega$, where $-\gamma_i\in \Delta(\nn^-)$, in other words, $A:=\linspan \{g^{-\gamma_1}\dots g^{-\gamma_k}|\gamma_i\in\Delta(\nn), \sum\gamma_i\prec\omega\}$. Denote by $N$ the $\kk$-module generated by the vectors $\{A\cdot v\}$. To prove the lemma, we will show that the $\kk$-module $M/N$ has $\tilde w$  as a $\bb\cap\kk$-singular weight vector, where $\tilde w$ is the image in $M/N$ of $w:=\bar u^{t}\cdot v$. 

First, we will prove that $\tilde w$ is $\bb\cap\kk$-singular: indeed, $\nn^-$ is an ideal in the Lie subalgebra $\kk\crplus\nn^-$ and so $g^{\alpha} \bar u^t \in\left( \bar u^t g^{\alpha}+A\right)$ for all $\alpha \in \Delta^+(\kk)$; this, together with the fact that $v$ is $\bb\cap\kk$-singular, proves our claim. Second, we will prove that if $w$ is non-zero, then $w\notin N$ and therefore $\tilde w$ is non-zero. Indeed, the weight spaces of $N$ are a subset of the set 
\[
X:=\bigcup_{\substack{ \gamma\in\Cone_{\ZZ}(\Delta(\nn^-)) \\ \gamma\succ-t\omega }}(\lambda+t\omega+\gamma+ \linspan_\ZZ{ \Delta( \kk)} ).
\]
We claim that $X$ does not contain $\lambda$: indeed, choose $l\in h$ such that $\gamma(l)=0$ for all $\gamma\in\Delta(\kk)$ and $\beta'(l)>0$ for all $\beta'\in\Delta(\nn)$ (Lemma \ref{leNilradicalIsSeparate}). Therefore $-t\omega(l)\notin \{\mu(l)| \mu\in X\}$ and our claim is established.

To finish the proof of the lemma we are left to show that $w=\bar u^{t}\cdot v$ is non-zero, and this is the first and only place we will use the strongly orthogonal decomposition of $\omega$. To do that we will prove by direct computation that the vector $u^{t}\bar u^{t}\cdot v$ is a strictly positive multiple of $v$. For any $n\in\ZZ_{>0}$ we compute
\begin{eqnarray*}
(g^{\beta_i}) (g^{-\beta_i})^{n} \cdot v&=& \sum_{j=0}^{n-1} \langle\beta_i,-j\beta_i+\lambda+t\omega \rangle (g^{-\beta_i})^{n-1}\cdot v\\
&=&\left(nt\langle\beta_i,\omega\rangle+nt\langle\beta_i,\lambda\rangle - \frac{n(n-1)}{2}\langle\beta_i,\beta_i\rangle \right)(g^{-\beta_i})^{n-1}\cdot v\\
&=& \left(\langle\beta_i, \beta_i\rangle\left(b_int-\frac{n(n-1)}{2}\right)+ \langle\beta_i,\lambda\rangle\right) (g^{ - \beta_i })^{n-1}\cdot v .
\end{eqnarray*}
Therefore
\begin{eqnarray*}
(g^{\beta_i})^{t} (g^{-\beta_i})^{t}\cdot v&=&\prod_{k=0}^{t-1}\Bigg(  \langle\beta_i, \beta_i \rangle \left(b_i(t-k)t-\frac{(t-k)(t-k-1)}{2}\right)\\ &&~~~~~~~~~~+ \langle\beta_i,\lambda\rangle ~ \Bigg)\cdot v.
\end{eqnarray*}
Define $c_i(t,\lambda)$ to be the above computed coefficient of $ v$, in other words, set $ c_i(t,\lambda) v:= (g^{\beta_i})^{t-s} (g^{-\beta_i})^{t-s}\cdot v$. Since $b_i$ is a positive integer, using the explicit form of $c_i(t,\lambda)$, we see that for a fixed $\lambda$, $c_i(t,\lambda)>0$ for all large enough $t$. Using that $g^{\pma\beta_i}$ and $g^{\pma\beta_j}$ commute whenever $i\neq j$, we get immediately that $u^{t}\bar u^{t}\cdot v=\prod_i c_i(t,\lambda) v$, which proves our claim that $u^{t}\bar u^{t}\cdot v$ is  a positive multiple of $v$. Therefore $\bar u^{t}\cdot v$ cannot be zero, which completes the proof of the lemma.
$\Box$

\begin{example} 
Let us illustrate Lemma \ref{leMultMain} in the case when $\gG\simeq \sL(3)$ and $M$ is an irreducible $(\gG,\lL)$-module of finite type. Consider first the case $\kk=\hh$. If $\nn=\{0\}$, the statement of the Lemma is a tautology. If $\nn\neq \{0\}$, the lemma asserts that a certain weight space of $M$ is non-zero. As the $\hh$-characters of all simple $\sL(3)$-modules of finite type are known (see for instance, \cite[Section 7]{Mathieu}), the claim of the lemma is a direct corollary of this result. 

The only other possibility for $\kk\neq\gG$ is $\kk\simeq \sL(2)+\hh$. Then there are 2 options for $\lL$: $\lL=\kk$ or $\lL=\kk\crplus \nn$, where $\dim \nn=2$. For $\lL=\kk$ the lemma is a tautology as $\nn=\{0\}$. Consider the case when $\lL=\kk\crplus\nn$ with $\dim \nn=2$, i.e. the case when $\lL$ is a parabolic subalgebra with Levi component isomorphic to $\sL(2)$. Here, there are two options for $M$: $\dim M<\infty$, and $\dim M=\infty$. In both cases $\kk$ acts semisimply on $M$ and the lemma asserts the existence of certain $\bb\cap\kk$ singular vector in $M$. More precisely, let $\gamma_1:=\eps_1-\eps_2, \gamma_2 :=\eps_2 - \eps_3$ be the positive simple basis of $\Delta(\gG)$ with respect to $\bb$, and let $\Delta(\kk)=\{\pm\gamma_1\}$. Then Lemma \ref{leMultMain} claims that if $\lambda$ is $\bb\cap\kk$-dominant and integral, then $L_{\lambda+t\omega}(\gG)$ has a $\bb\cap\kk$-singular vector of weight $\lambda$ for all large enough $t$. Up to multiplication by a positive integer, there are two different options for picking the weight $\omega$ - either  $\omega=\gamma_1+\gamma_3$ or $\omega=\gamma_2+\gamma_3$.

\begin{itemize}
\item Suppose $\omega=\gamma_1+\gamma_3$.  Let $x(t)$ and $y(t)$ be functions of $t$ and $\lambda$, defined by $\lambda+t\omega = \frac{x(t)}{3} ( 2\gamma_1 + \gamma_2) +\frac{y(t)}{3} (\gamma_1 + 2 \gamma_2) $. The requirement that $\lambda$ be $\bb\cap\kk$-dominant forces $ t\leq x(t) $. Then the lemma states that there exists a constant $t_0$, such that for all $t_0\leq t\leq x(t)$ we have that $L_{\lambda+t\omega}(\gG)$ has a $\bb\cap\kk$-singular vector of weight $\lambda$. The reader can verify that for both infinite and finite-dimensional $M$, that the constant $t_0$ can be chosen to be zero.

\item Suppose $\omega=\gamma_2$. Then the lemma states that there exists a constant $t_0$, such that for all $t\geq t_0$ we have that $L_{\lambda+t\omega}(\gG)$ has a $\bb\cap\kk$-singular vector of weight $\lambda$. As the reader can verify, when $\dim M=\infty$, the statement of the Lemma holds for $t_0=0$; in the case that $M$ is finite-dimensional, one must pick $t_0\geq -\langle\lambda,\gamma_2\rangle$.
\end{itemize}
 
\end{example}

\begin{lemma}\label{leNotFK}
Suppose there exists an $\lL$-strictly infinite weight $\omega$. 
\begin{itemize}
\item[(a)] Any $(\gG,\lL)$-module $M$ for which any element in $\gG\backslash\lL$ acts freely is of infinite type over $\lL$.
\item[(b)] $\lL=\kk\crplus\nn$ is a Fernando-Kac subalgebra of infinite type. 
\end{itemize}
\end{lemma}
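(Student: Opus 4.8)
The plan is to establish (a) by turning the three-part scheme from the beginning of this section into an explicit construction, and then to reduce (b) to (a) via Fernando's injectivity dichotomy. Fix the data furnished by the hypothesis that $\omega$ is $\lL$-strictly infinite (\refdef{defKmult}): a two-sided expansion $\omega=\sum_{i=1}^{l}a_i\alpha_i$ with $\alpha_i\in\hw_{\bb\cap\kk}(\gG/\lL)$ and $[\gG^{\alpha_i},\nn]\subset\nn$ for all $i$, together with a strongly orthogonal expansion $\omega=\sum_{j=1}^{k}b_j\beta_j$, $\beta_j\in\Delta(\nn)$, $\beta_i\sperp\beta_j$. Put $x:=(g^{\alpha_1})^{a_1}\cdots(g^{\alpha_l})^{a_l}$ and $\bar u:=(g^{-\beta_1})^{b_1}\cdots(g^{-\beta_k})^{b_k}$. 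Since $\lL\subset\gG[M]$ acts locally finitely, every nonzero vector lies in a finite-dimensional $\lL$-submodule, so Lie's theorem produces a $\bb\cap\lL$-singular weight vector $v$, say of weight $\mu$; the candidate family is $w_t:=\bar u^{t}x^{t}\cdot v$, $t\in\ZZ_{>0}$.

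First I would check that $v_t:=x^{t}\cdot v$ is a nonzero $(\bb\cap\kk)\crplus\nn$-singular vector of weight $\mu+t\omega$, so that \refle{leMultMain} applies to it. Nonvanishing is exactly the role of the freeness hypothesis: each $\alpha_i$ lies in $\Delta(\gG)\setminus\Delta(\lL)$, so $g^{\alpha_i}$ acts injectively and hence so does $x$. For $\bb\cap\kk$-singularity, membership $\alpha_i\in\hw_{\bb\cap\kk}(\gG/\lL)$ gives $\alpha_i+\delta\notin\Delta(\gG)$, i.e. $[g^{\delta},g^{\alpha_i}]=0$, for every $\delta\in\Delta^{+}(\kk)$, so $g^{\delta}$ commutes with $x$ and kills $v_t$. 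The $\nn$-singularity is where the second clause of two-sidedness is used: if $w$ is annihilated by $\nn$ then for $\beta\in\Delta(\nn)$ one has $g^{\beta}g^{\alpha_i}w=[g^{\beta},g^{\alpha_i}]w\in\nn\cdot w=0$ because $[g^{\beta},g^{\alpha_i}]\in[\nn,\gG^{\alpha_i}]\subset\nn$; thus the space of $\nn$-singular vectors is $x$-stable and contains $v$, whence $v_t$ is $\nn$-singular. Now \refle{leMultMain} with $\lambda=\mu$ (which is $\bb\cap\kk$-dominant and $\kk$-integral, as $U(\kk)v$ is finite-dimensional by local finiteness of $\kk$) shows that for all $t$ beyond some $t_0$ the image of $w_t=\bar u^{t}v_t$ is a nonzero $\bb\cap\kk$-singular vector of weight $\mu$ in a $\kk$-subquotient of $M$ — the third property — and in passing guarantees $w_t\neq0$.

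The first property is immediate: $x^{t}$ raises the weight by $t\sum_i a_i\alpha_i=t\omega$ and $\bar u^{t}$ lowers it by $t\sum_j b_j\beta_j=t\omega$, so every $w_t$ has weight $\mu$. For the second, linear independence, I would use that the strongly orthogonal roots $\beta_j$ span commuting $\sL(2)$-triples and that $v_t$ is a highest weight vector for each of them ($g^{\beta_j}v_t=0$ by $\nn$-singularity). Given a finite dependence $\sum_t\kappa_t w_t=0$ with top index $T$, applying $u^{T}:=(g^{\beta_1})^{b_1 T}\cdots(g^{\beta_k})^{b_k T}$ annihilates each $w_t$ with $t<T$ (in every $\sL(2)$-factor one raises strictly more than one lowered, hitting a positive power of $g^{\beta_j}$ on $v_t$), while carrying $w_T$ to a strictly positive multiple of $v_T$ by the computation in the proof of \refle{leMultMain}; since $v_T\neq0$ this forces $\kappa_T=0$, and we finish by induction. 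With all three properties in place the scheme of this section yields that the irreducible $\lL$-module of highest weight $\mu$ has infinite multiplicity in $M$ (equivalently, $L_\mu(\kk)$ has infinite $\kk$-multiplicity, and finite type over $\lL$ and over $\kk$ coincide because $\nn$ acts locally nilpotently), proving (a).

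For (b), $\lL$ is a Fernando-Kac subalgebra by \cite{PS:GenHarishChandra}, so some irreducible $M$ has $\gG[M]=\lL$; I must see that every such $M$ is of infinite type. On an irreducible module a root vector $g^{\gamma}$ acts locally finitely exactly when $\gamma\in\Delta(\lL)$ and injectively otherwise (Fernando); since the proof of (a) uses freeness only of the $g^{\alpha_i}$ (roots $\alpha_i\notin\Delta(\lL)$), that proof applies verbatim and $M$ is of infinite type. The step I expect to be most delicate is the construction of $v_t$: the $\bb\cap\kk$-singularity and the $\nn$-singularity must hold simultaneously for one and the same raising operator $x$, and it is precisely the conjunction of the two clauses of two-sidedness — membership of the $\alpha_i$ in $\hw_{\bb\cap\kk}(\gG/\lL)$ and the normalizer condition $[\gG^{\alpha_i},\nn]\subset\nn$ — that secures both at once, while strong orthogonality of the $\beta_j$ is what simultaneously powers \refle{leMultMain} and yields the linear independence.
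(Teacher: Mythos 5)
Your construction is the right one and, up to the final step, coincides with the paper's: the raising operator $x=(g^{\alpha_1})^{a_1}\cdots(g^{\alpha_l})^{a_l}$, the proof that $x^t\cdot v$ is nonzero (freeness) and $(\bb\cap\kk)\crplus\nn$-singular (your induction on applications of $g^{\alpha_i}$ using $[\gG^{\alpha_i},\nn]\subset\nn$ is a clean equivalent of the paper's commutation argument $g^{\beta}(u^{\bar\alpha})^t\in U(\qq)a$ with $a\in U(\nn)$ without constant term), the $\sL(2)$-type argument for linear independence of the $w_t$, and the reduction of (b) to (a) via Fernando's locally-finite-or-injective dichotomy are all exactly as in the paper.

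The gap is in how you conclude. You apply \refle{leMultMain} to each $v_t$ separately, so each $w_t$ becomes singular in \emph{its own} $\kk$-subquotient, you prove linear independence of the $w_t$ only inside $M$, and you then invoke the informal three-property scheme from the start of the section as if it were a theorem. Read literally (with separate subquotients) that scheme is false. Counterexample: for $\kk\simeq\sL(2)$ take $M=L_0(\kk)\oplus\bigoplus_{k\geq 1}L_{2k}(\kk)$, and set $w_k:=v_0+m_k$, where $v_0$ spans $L_0(\kk)$ and $m_k$ is a weight-$0$ vector of $L_{2k}(\kk)$. The $w_k$ are linearly independent, all of weight $0$, and each is $\bb\cap\kk$-singular modulo the submodule $L_{2k}(\kk)$, which does not contain it; yet every irreducible occurs in $M$ with multiplicity one, so $M$ has finite type. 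The obstruction is visible here: modulo the sum $\bigoplus_k L_{2k}(\kk)$ all the $w_k$ collapse onto $v_0$, so singularity in separate subquotients cannot be assembled into a large multiplicity. What the paper does, and what your write-up is missing, is to build \emph{one common} quotient: let $A_t\subset U(\nn^-)$ be the span of monomials of weight strictly higher than $-t\omega$, let $N$ be the $\kk$-module generated by $\bigcup_{t>t_0}A_t\cdot v_t$, and use the separating element of \refle{leNilradicalIsSeparate} to check that $N$ contains no vectors of weight $\mu$. Then the images of \emph{all} the $w_t$ in the single quotient are simultaneously nonzero, $\bb\cap\kk$-singular, and (because $N_\mu=0$, so independence descends) linearly independent; this produces an unbounded-dimensional space of singular vectors of weight $\mu$ in one subquotient, hence unbounded Jordan--H\"older multiplicities of $L_\mu(\kk)$ in finite-dimensional $\kk$-submodules of $M$, i.e.\ infinite type over $\kk$. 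Finally, the passage from infinite type over $\kk$ to infinite type over $\lL$ is \cite[Theorem 3.1]{PSZ}; your parenthetical ``because $\nn$ acts locally nilpotently'' is not a substitute for that reference. With the common-quotient step inserted, your proof becomes the paper's.
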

\begin{proof} 
As any irreducible strict $(\gG,\lL)$-module satisfies the conditions of (a), (a) implies (b); we will now show (a). Let $v_\lambda$ be a $(\bb\cap \lL)$-singular vector. 

Let $\omega:=\sum_{i=1}^l a_i\alpha_i= \sum_{i=1}^k b_i\beta_i$ be one decomposition \refeq{eqRelation}. Let $u^{ \bar \alpha}:= (g^{\alpha_1})^{a_1} \dots (g^{\alpha_l})^{a_l}\in U(\gG)$. The vector $v_{\lambda+ t\omega}:= (u^{ \bar\alpha} )^t\cdot v_\lambda$ is non-zero by the conditions of (a). We claim that $v_{\lambda+t\omega}$ is $\bb\cap \lL$-singular. Indeed, first note that since all $\alpha_i$ are $\kk \cap \bb$-singular, $v_{\lambda+t\omega}$ is $\bb\cap \kk$-singular. Second, let $g^{\beta} \in \gG^{\beta} \subset \nn$. By the second requirement for being two-sided we can commute $g^\beta$ with $u^{\bar \alpha}$ to obtain that $g^{\beta}(u^{\bar\alpha})^n \in U(\qq)  a$, where $a\in U(\nn)$ is an element with no constant term and $\qq$ is the Lie subalgebra generated by $\gG^{\alpha_1},\dots, \gG^{ \alpha_k }$. Since $a\cdot v_{\lambda}=0$, we get $g^{\beta}\cdot v_{\lambda+t\omega}=0$, which proves our claim.

All $v_{\lambda+t\omega}$ are linearly independent since they have pairwise non-coinciding weights. Let $M_t$ be the $\gG$-submodule of $M$ generated by $v_{\lambda+t\omega}$ and let $M'$ be the sum of the $M_t$'s as $t$ runs over the non-negative integers. \refcor{corMultMain} shows that the $\kk$-module $L_{\lambda }( \kk )$ has non-zero multiplicity in $M_t$ for all large enough $t$. Consider the vectors $\bar u^t\cdot v_{\lambda+t\omega}$ generating the $\kk$-subquotients isomorphic to $L_{\lambda }( \kk )$, where $\bar u$ is defined as in the proof of \refle{leMultMain}. Let $A_t$ be the linear subspace of $U(\nn^-)$ generated by all possible monomials $g^{-\gamma_1}\dots g^{-\gamma_k}$ that have strictly higher weight than $-t\omega$, where $\gamma_i\in\Delta(\nn)$. Let $N$ be the $\kk$-submodule generated by the vectors $\displaystyle \bigcup_{t> t_0}A_t \cdot v_{\lambda+t\omega}$, where $t_0$ is the number given by Lemma \ref{leMultMain}. Just as in the proof of \refle{leMultMain} we see that each vector $\bar u^t\cdot v_{\lambda+t\omega}$ is not in $N$ and is the image of a $\bb\cap\kk$-singular vector in the quotient $ M'/N $. 

We will now prove that $\bar u^t\cdot v_{\lambda+t\omega}$ are linearly independent. Indeed, let $u$ be defined as in the proof of \refle{leMultMain}. Now take a linear dependence $0=\sum_{i=1}^N c_i \bar u^{t_i}\cdot v_{\lambda+t_i\omega}$ such that $t_N\geq t_1, \dots, t_N\geq  t_{N-1}$ and apply $u^{t_N}$ to both sides. As the computations in the proof of \refle{leMultMain} show, $u^{t_N}$ kills all but the last summand; therefore the last summand has coefficient $c_N=0$. Arguing in a similar fashion for the remaining summands, we conclude that the starting linear dependence is trivial. This shows that the $\kk$-module $L_{\lambda }( \kk )$ has infinite multiplicity in the $\kk$-module $M'$. We conclude that $M$ has infinite type over $\kk$, hence, by \cite[Theorem 3.1]{PSZ}, $M$ has also infinite type over $\lL$. 
\end{proof}

\section{Existence of $\lL$-infinite weights}\label{secBigTedious}
\subsection{Existence of two-sided weights}\label{secTwoSided}

\begin{lemma}\label{lestreakOfRoots} Let $\alpha_1,\dots,\alpha_k,\alpha_{k+1},\gamma$ be vectors of a root system such that $\alpha_1+ \dots+ \alpha_k + \alpha_{k+1}+\gamma$ is a root different from $\gamma$ or is equal to zero, and $\alpha_i+\gamma$ is neither a root nor zero for $i=1,\dots,k$. Then $\alpha_{k+1}+\gamma$ is a root or zero.
\end{lemma}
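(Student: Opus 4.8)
The plan is to argue by contradiction through a minimal counterexample, and to collapse the whole statement into a single norm comparison that is settled by Cauchy--Schwarz. Throughout I would use two standard facts about roots $\mu,\nu$: if $\langle\mu,\nu\rangle<0$ then $\mu+\nu$ is a root or zero, and (applying this to $\nu$ replaced by $-\nu$) if $\langle\mu,\nu\rangle>0$ and $\mu\neq\nu$ then $\mu-\nu$ is a root or zero.

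First I would pass to the contrapositive and symmetrize. Suppose, towards a contradiction, that $\alpha_{k+1}+\gamma$ is also neither a root nor zero. Then all of $\alpha_1,\dots,\alpha_{k+1}$ satisfy ``$\alpha_i+\gamma$ is neither a root nor zero'', while $\sigma:=\gamma+\sum_{i=1}^{k+1}\alpha_i$ is a root different from $\gamma$ or is zero. The distinguished role of $\alpha_{k+1}$ now disappears, so it suffices to prove the symmetric statement: if $\gamma,\alpha_1,\dots,\alpha_m$ are roots with $\alpha_i+\gamma$ neither a root nor zero for every $i$, then $\gamma+\sum_i\alpha_i$ is neither a root different from $\gamma$ nor zero. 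Two observations are immediate. Since $\alpha_i+\gamma$ is neither a root nor zero, $\alpha_i\neq-\gamma$ and $\langle\alpha_i,\gamma\rangle\geq 0$ for every $i$ (otherwise the first standard fact would make $\alpha_i+\gamma$ a root). Consequently $\langle\sigma,\gamma\rangle=\langle\gamma,\gamma\rangle+\sum_i\langle\alpha_i,\gamma\rangle\geq\langle\gamma,\gamma\rangle>0$, which rules out $\sigma=0$; so any counterexample has $\sigma$ a root with $\sigma\neq\gamma$.

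Next I would take a counterexample with $m$ minimal and extract sign information. For each $j$ the truncated sum $\sigma-\alpha_j=\gamma+\sum_{i\neq j}\alpha_i$ again satisfies all hypotheses, now with $m-1$ summands, so by minimality it is not a root different from $\gamma$. It is nonzero by the same inner-product bound as above, and it cannot equal $\gamma$ (that would force $\sum_{i\neq j}\alpha_i=0$ and hence $\sigma=\gamma+\alpha_j$ a root, contradicting that $\alpha_j+\gamma$ is not a root). Therefore $\sigma-\alpha_j$ is not a root for every $j$, and in particular $\sigma\neq\alpha_j$. By the second standard fact this forces $\langle\sigma,\alpha_j\rangle\leq 0$ for all $j$, and summing over $j$ gives $\langle\sigma,\sigma-\gamma\rangle\leq 0$, that is, $\langle\sigma,\sigma\rangle\leq\langle\sigma,\gamma\rangle$.

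Finally I would combine the two inequalities $\langle\gamma,\gamma\rangle\leq\langle\sigma,\gamma\rangle$ and $\langle\sigma,\sigma\rangle\leq\langle\sigma,\gamma\rangle$ with Cauchy--Schwarz $\langle\sigma,\gamma\rangle\leq\sqrt{\langle\sigma,\sigma\rangle}\sqrt{\langle\gamma,\gamma\rangle}$. The first pair gives $\langle\sigma,\sigma\rangle\leq\langle\sigma,\gamma\rangle\leq\sqrt{\langle\sigma,\sigma\rangle}\sqrt{\langle\gamma,\gamma\rangle}$ and $\langle\gamma,\gamma\rangle\leq\langle\sigma,\gamma\rangle\leq\sqrt{\langle\sigma,\sigma\rangle}\sqrt{\langle\gamma,\gamma\rangle}$, whence $\langle\sigma,\sigma\rangle=\langle\gamma,\gamma\rangle$ and equality holds in Cauchy--Schwarz. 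Thus $\sigma$ and $\gamma$ are proportional, of equal length and with positive inner product, so $\sigma=\gamma$, contradicting $\sigma\neq\gamma$. This proves the symmetric statement, hence the lemma.

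The one genuinely nonroutine move, and the step I expect to be the main obstacle, is recognizing that minimality should be invoked by deleting an \emph{arbitrary} summand $\alpha_j$ rather than the distinguished $\alpha_{k+1}$: this is exactly what upgrades the combinatorial information ``$\sigma-\alpha_j$ is not a root'' into the quantitative signs $\langle\sigma,\alpha_j\rangle\leq 0$. Once that is in place, the norm comparison via Cauchy--Schwarz is automatic and completely uniform over all types, so no case-by-case analysis of the individual root systems is needed.
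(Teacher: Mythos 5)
Your argument is correct, and it takes a genuinely different route from the paper's. Both proofs open the same way: from the hypothesis, each $\langle\alpha_i,\gamma\rangle\geq 0$, hence $\langle\sigma,\gamma\rangle\geq\langle\gamma,\gamma\rangle>0$, which disposes of the case where the sum is zero. But from there the paper proceeds type by type: in $A$, $D$, $E$ the inequality $\langle\delta,\gamma\rangle\geq\langle\gamma,\gamma\rangle$ with $\delta\neq\gamma$ forces $\gamma$ short and $\delta$ long, an immediate contradiction; types $B$ and $C$ are then handled by explicit coordinate computations, $G_2$ by direct inspection, $F_4$ by a separate minimality argument that first establishes $\langle\alpha_i,\alpha_j\rangle\geq 0$ via a rank-$3$ reduction and then shortens the relation by pairing with $\alpha_1$; the reducible case is left to the reader. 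You replace all of this with one uniform mechanism: symmetrize away the distinguished summand, take a counterexample with minimal $m$, and delete an \emph{arbitrary} $\alpha_j$, so that minimality (together with the checks that $\sigma-\alpha_j$ is neither $0$ nor $\gamma$, both of which you carry out correctly) converts into the sign constraints $\langle\sigma,\alpha_j\rangle\leq 0$, i.e.\ $\langle\sigma,\sigma\rangle\leq\langle\sigma,\gamma\rangle$; combined with $\langle\gamma,\gamma\rangle\leq\langle\sigma,\gamma\rangle$ and Cauchy--Schwarz this forces $\sigma=\gamma$, contradicting $\sigma\neq\gamma$. The two ingredients you use (roots with negative product sum to a root or zero, roots with positive product differ by a root) are valid in every crystallographic root system, so your proof needs no classification, no root-length discussion, and covers reducible systems automatically. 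In effect you have taken the trick the paper deploys only for $F_4$ --- exploit minimality by pairing the long sum against a summand --- and, thanks to the symmetrization, made it work against $\sigma$ uniformly in all types, whereas the paper can only run it after the pairwise inequalities $\langle\alpha_i,\alpha_j\rangle\geq 0$ have been established by hand. What the paper's case analysis buys in exchange is explicit coordinate control in each classical type, which the author reuses heavily in the much longer case analysis of Lemma \ref{leBigTedious}; your argument, being coordinate-free, does not feed into that, but as a proof of Lemma \ref{lestreakOfRoots} itself it is shorter, cleaner, and strictly more general.
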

\begin{proof}
We will establish the lemma only for an irreducible root system; the case of a reducible root system is an immediate corollary which we leave to the reader. For $G_2$ the statement is a straightforward check, so assume in addition that the root system is not of type $G_2$. 

Assume the contrary to the statement of the lemma. Let 
\begin{equation}\label{eqLeRootTricks}
\alpha_1+\dots+\alpha_k+\alpha_{k+1}+\gamma=\delta.
\end{equation} 
Then $\langle\beta,\gamma\rangle\geq 0$, $\langle\alpha_i,\gamma\rangle\geq 0$. Apply $\langle \bullet,\gamma\rangle$ to both sides of (\ref{eqLeRootTricks}). We get $\langle\alpha_1, \gamma\rangle+ \dots+\langle \alpha_k , \gamma \rangle+\langle\alpha_{k+1}, \gamma \rangle + \langle \gamma, \gamma \rangle = \langle \delta,\gamma\rangle$. If $\delta=0$, we immediately get that $\langle\gamma,\alpha_i\rangle<0$ for some $i$ and the statement of the lemma holds as the sum of two roots with negative scalar product is always a root. Therefore we can suppose until the end of the proof that $\delta\neq 0$. 

Since $\langle\alpha_i,\gamma\rangle\geq 0$ and $\delta\neq \gamma$, we must have $\langle \gamma, \alpha_1 \rangle = \dots= \langle \gamma, \alpha_k \rangle= \langle \gamma, \alpha_{k+1} \rangle= 0$ and $\langle \gamma, \gamma \rangle= \langle \delta, \gamma \rangle $. Since $\delta \neq \gamma$ by the conditions of the lemma, the only way for this to happen is to have that $\gamma$ is a short and $\delta$ is long, which gives the desired contradiction in types $A$, $D$ and $E$. Suppose now the given root system is of type $C$. Then without loss of generality we can assume that $\delta = 2\eps_1$ and $\gamma= \eps_1 + \eps_2$. But then there must be a summand on the left-hand side of (\ref{eqLeRootTricks}) which cancels the $+\eps_2$ term of $\gamma$. None of the $\alpha_i$'s have a $-\eps_2$ term (since $\alpha_i+\gamma$ is not a root) and therefore $\alpha_{k+1}+\gamma$ is a root, contradiction. Suppose next that the given root system is of type $B$. Then without loss of generality $\gamma$ can be assumed to be $\eps_1$ and $\delta$ to be $\eps_1+\eps_2$. Clearly  $\alpha_1+ \dots+\alpha_k +\alpha_{k+1} +\eps_1 = \eps_1+\eps_2$ wouldn't be possible if all $\alpha_i$'s and $\alpha_{k+1}$ were long. Therefore one of them is short, which implies that this root plus $\gamma$ is a root, contradiction.

Suppose finally that the given root system is of type $F_4$. Pick a minimal relation (\ref{eqLeRootTricks}) that contradicts the statement of the lemma, i.e. one with minimal number of $\alpha_i$'s. This number must be at least 3, since otherwise this relation would generate a root subsystem of rank 3 or less and this is impossible by the preceding cases. We claim that for all $i,j$, $\alpha_{ij}:=\alpha_i+\alpha_j$ is not a root or zero. Indeed, assume the contrary. If $\alpha_{ij}+\gamma= \alpha_i+ \alpha_j +\gamma$ is not a root or zero we could replace $\alpha_i+\alpha_j$ by $\alpha_{ij}$ in contradiction with the minimality of the initial relation. Therefore $\alpha_{ij}+\gamma= \alpha_i+ \alpha_j +\gamma$ is a root or zero, and since the three roots $\alpha_i,\alpha_j,\gamma$ generate a root subsystem of rank at most 3, the preceding cases imply that at least one of $\alpha_i+\gamma$ and $\alpha_j+\gamma$ is a root, contradiction.

So far, for all $i,j$, we established that $\alpha_i+\alpha_j$ is not a root or zero; therefore $\langle\alpha_i,\alpha_j\rangle\geq 0$ for all $i,j$.
Taking $\langle\alpha_1,\bullet \rangle$ on each side of  $\alpha_1+ \dots+ \alpha_k +\alpha_{k+1}+\gamma=\delta$ we see that $2\leq\langle\alpha_1,\alpha_1\rangle\leq \langle\alpha_1,\delta\rangle$. Therefore $\delta-\alpha_1$ is a root or zero, and transferring $\alpha_1$ to the right-hand side we get a shorter relation than the initial one. Contradiction.
\end{proof}

\begin{defi}\label{defMinRelation}
For a relation \refeq{eqRelation} we define \emph{the length of the relation} to be $\sum_i a_i$. We define a relation \refeq{eqRelation} to be \emph{minimal} if its length is minimal, there are no repeating summands on either side, and no two $\beta_i$'s sum up to a root. 
\end{defi}

\textbf{Remark.} Any relation of minimal length can be transformed to a minimal relation by combining the repeating summands on both sides and by replacing the $\beta_i$'s in \refeq{eqRelation} that sum up to roots by their sums. If in addition the initial relation of minimal length corresponds to a two-sided weight, Lemma \ref{lestreakOfRoots} implies that the resulting minimal relation again corresponds to a two-sided weight.

\begin{prop}\label{propabundanceHolds}
Let the cone condition fail. Then there exists a minimal relation (\ref{eqRelation}) corresponding to a two-sided weight $\omega$.
\end{prop}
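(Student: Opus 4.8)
The plan is to exhibit a two-sided weight $\omega$ together with a witnessing relation, and then reduce that relation to a minimal one. By the Remark following \refdef{defCone}, the failure of the cone condition over $\QQ$ is equivalent to its failure over $\ZZ$, so there is a nonzero $\omega\in\Cone_\ZZ(\Delta(\nn))\cap\Cone_\ZZ(\hw_{\bb\cap\kk}(\gG/\lL))$ and hence a relation \refeq{eqRelation}. This supplies the first defining condition of a two-sided weight for free; the whole difficulty lies in the second condition, that the $\alpha_i$ can be chosen with $[\gG^{\alpha_i},\nn]\subset\nn$.

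To arrange this I would fix a relation \refeq{eqRelation} with $\omega\neq0$ that is doubly extremal: first of minimal length $\sum_i a_i$, and among those of maximal value $\omega(h)$, where $h\in\hh$ is the separating element of \refle{leNilradicalIsSeparate} (so $\gamma(h)=0$ on $\Delta(\kk)$ and $\beta(h)>0$ on $\Delta(\nn)$); the maximum exists because bounded length leaves only finitely many candidate weights. A first reduction rules out $\alpha_i\in\Delta(\nn^-)$: if $\alpha_1\in\Delta(\nn^-)$ then $-\alpha_1\in\Delta(\nn)$, and transferring $a_1\alpha_1$ to the right-hand side yields a relation of strictly smaller length whose weight has $h$-value $\omega(h)+a_1|\alpha_1(h)|>0$, hence nonzero, contradicting minimality of the length. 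So every $\alpha_i\notin\Delta(\lL)\cup\Delta(\nn^-)$.

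The core step is then to show that the $\alpha_i$ of this extremal relation already normalize $\nn$. Supposing not, there is $\beta^*\in\Delta(\nn)$ with $\sigma:=\alpha_1+\beta^*$ a root not in $\Delta(\nn)$. Since $\Delta(\kk)$, $\Delta(\nn)$, $\Delta(\nn^-)$ are $\kk$-stable and $\alpha_1\notin\Delta(\nn^-)$, the possibilities $\sigma\in\Delta(\kk)$ and $\sigma\in\Delta(\nn^-)$ each force $\alpha_1=\sigma-\beta^*\in\Delta(\nn^-)$, which is excluded; hence $\sigma\in\Delta(\gG)\setminus(\Delta(\lL)\cup\Delta(\nn^-))$. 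If $\sigma\in\hw_{\bb\cap\kk}(\gG/\lL)$, replacing one copy of $\alpha_1$ by $\sigma$ and appending $\beta^*$ on the right produces a relation of the same length but of weight $\omega+\beta^*$, contradicting the maximality of $\omega(h)$ since $\beta^*(h)>0$. Otherwise there is $\delta\in\Delta^+(\kk)$ with $\sigma+\delta$ a root; as $\alpha_1+\delta$ is not a root, \refle{lestreakOfRoots} (with $\gamma=\delta$) gives that $\beta^*+\delta$ is a root, so $\beta^*+\delta\in\Delta(\nn)$, and $\alpha_1+(\beta^*+\delta)=\sigma+\delta$ is again a root which by $\kk$-stability cannot re-enter $\Delta(\kk)$, $\Delta(\nn)$ or $\Delta(\nn^-)$. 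Replacing $\beta^*$ by $\beta^*+\delta$ strictly raises the $\kk$-height of $\sigma$; as $\sigma$ ranges over a finite set this terminates at a $\bb\cap\kk$-singular $\sigma$, reducing to the previous case. Either way we reach a contradiction, so $\omega$ is two-sided.

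Finally, I would pass from this extremal (minimal-length) relation to a genuinely minimal one by the recipe in the Remark following \refdef{defMinRelation}: combine repeated summands and replace $\beta_i$'s whose sum is a root by that sum. This keeps $\omega$ fixed, and by that Remark (which invokes \refle{lestreakOfRoots}) the resulting minimal relation still corresponds to the two-sided weight $\omega$, completing the proof. The main obstacle is the core normalization step; the two ingredients that make it work are the extremal choice of the relation, which forbids the ``lateral'' moves that would raise $\omega(h)$, and \refle{lestreakOfRoots}, which guarantees that a non-singular $\sigma$ can always be raised within $\Delta(\nn)$ rather than escaping the cone.
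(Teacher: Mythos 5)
Your proposal is correct and follows essentially the same route as the paper's proof: both detect a failing $\alpha_1$ via some $\beta^*\in\Delta(\nn)$ with $\alpha_1+\beta^*\notin\Delta(\lL)$, rule out $\alpha_1+\beta^*\in\Delta(\kk)$ by a shortening argument, use Lemma \ref{lestreakOfRoots} to climb $\alpha_1+\beta^*$ by positive roots of $\kk$ (keeping the companion $\beta^*+\gamma_1+\dots$ inside $\Delta(\nn)$) until it is $\bb\cap\kk$-singular, and then trade one copy of $\alpha_1$ for that singular root while appending a new $\Delta(\nn)$-vector on the right, finally reducing to a minimal relation via the Remark after Definition \ref{defMinRelation}. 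The only difference is bookkeeping: where the paper iterates this repair and proves termination because the right-hand side keeps gaining $\Delta(\nn)$-vectors at fixed left-hand length (via the separating element of Lemma \ref{leNilradicalIsSeparate}), you front-load the same monotonicity into a doubly extremal choice (minimal length, then maximal $\omega(h)$), so the repaired relation contradicts extremality in one step.
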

\begin{proof}
The failure of the cone condition is equivalent to the existence of a relation \refeq{eqRelation}. Pick a minimal such relation. Assume that the weight arising in this way is not two-sided. Together with the minimality of the relation this implies that for one of the $\alpha_i$'s, say $\alpha_1$, there exist roots $\beta'\in\Delta(\nn)$ and $\delta\in\Delta(\gG)\backslash\Delta(\nn)$ such that $\delta= \alpha_{1}+\beta'$.

We claim that $\delta\notin\Delta(\lL)$. Indeed, assume on the contrary that $\delta\in\Delta(\kk)$. Then $\beta'-\delta$ is a root, and therefore lies in $\Delta(\nn)$. We get the relation $(a_1-1)\alpha_1+a_2\alpha_2+ \dots+ a_l\alpha_l = b_1\beta_1 +\dots+b_k\beta_k+(\beta'-\delta)$ which is shorter than the initial relation, contradiction. 

Now suppose that $\delta$ is not $\bb\cap\kk$-singular. Therefore there exists $\gamma_1\in\Delta^+(\kk)$ such that $\delta_1 :=\delta + \gamma_1$ is a root. If $\delta_1$ is not singular, continue picking in a similar fashion roots $\gamma_2,\dots , \gamma_s \in \Delta^+(\kk)$, such that $\delta_t= \delta+ \gamma_1+ \dots +\gamma_t$ is a root for any $t\leq s$. Since this process must be finite, $\delta_s$ is $\bb\cap\kk$-singular for some $s$. As $\alpha_1$ is $\bb\cap\kk$-singular, $\alpha_1 +\gamma_1$ is not a root.  Apply now \refle{lestreakOfRoots} to $\delta_1=\alpha_1+\beta'+\gamma_1$ to get that $\beta'':=\beta'+\gamma_1$ is a root. Therefore $\beta''\in\Delta(\nn)$. Arguing in a similar fashion, we obtain that $\beta''':=\beta''+ \gamma_2$ is a root of $\nn$, and so on. Finally, we obtain $\beta^{(s+1)}:=\beta'+\gamma_1+\dots +\gamma_{s}\in\Delta(\nn)$ and so we get a new relation \refeq{eqRelation}:
\begin{equation}\label{eqminMaxRel}
(a_1-1)\alpha_1+ \delta_s+a_2\alpha_2+\dots+a_l\alpha_l=\beta_1+\dots +\beta_k+\beta^{(s+1)}.
\end{equation}
We can reduce \refeq{eqminMaxRel} so that no two $\beta$'s add to a root (replace any such pairs by their sum) and so that if $\delta_s=\alpha_i$ for some $i$ then $\delta_s+a_i\alpha_i$ is replaced by $(a_i+1)\alpha_i$. 

This reduction of (\ref{eqminMaxRel}) is a minimal relation. If this relation does not yield a two-sided weight, one applies the procedure again and obtains a new minimal relation, and so on. As this process adds vectors from $\Delta(\nn)$ to the right-hand side of the relation, while the length of the left-hand side remains constant, the process must be finite (cf. Lemma \ref{leNilradicalIsSeparate}). Therefore there exists a minimal relation corresponding to a two-sided weight.
\end{proof}
\subsection{From two-sided to $\lL$-infinite weights}
In the remainder of this section we prove that the failure of the cone condition implies the existence of an $\lL$-infinite weight: our proof is mathematical for the classical Lie algebras and $G_2$ and uses a computer program for the exceptional Lie algebras $F_4$, $E_6$ and $E_7$.

For the classical Lie algebras, our scheme of proof can be summarized as follows. First, we classify all minimal relations (\ref{eqRelation}). It turns out by direct observation that whenever the cones intersect, the minimal relations (\ref{eqRelation}) are always of length 2, in particular this minimal length does not depend on the rank of the root system. In type $A$ this was discovered in \cite{PSZ}. In types $A$, $B$ and $D$, a direct inspection of all minimal relations shows that each of them possesses a strongly orthogonal decomposition with respect to $\Delta(\nn)$. Since at least one minimal relation must be two-sided by Proposition \ref{propabundanceHolds}, we obtain the existence of an $\lL$-strictly infinite weight. 

In type $C$ we do not have that all minimal relations possess a strongly orthogonal decomposition. However, the ``discrepancy'' is small - there is only one minimal relation (\ref{eqRelation}) without such a decomposition. In this particular case, we exhibit a root subalgebra $\tT$ containing $\kk$ such that $\tT$ has an $\lL\cap\tT$-strictly infinite weight, i.e. there is an $\lL$-infinite weight.

The proof for the exceptional Lie algebras uses a mixture of combinatorics and computer brute force. If $C(\kk_{ss})\cap\nn$ is not the nilradical of a parabolic subalgebra of $C(\kk_{ss})$, we prove in \refle{leCentralizerAndNilradical} that an $\lL$-infinite weight always exists, involving only roots of the root system of $C(\kk_{ss})$. Then, using our computer program, we enumerate up to $\gG$-automorphisms all remaining cases - i.e. the root subalgebras for which $C(\kk_{ss})\cap\nn$ is the nilradical of a parabolic subalgebra of $C(\kk_{ss})$ containing $C(\kk_{ss})\cap \hh$. This direct computation shows the existence of $\lL$-strictly infinite weights in types $E_6$ and $E_7$. In type $F_4$, our program fails to exhibit an $\lL$-strictly infinite weight for only one (unique up to $\gG$-automorphism) choice of $\lL$; in this case we give an argument similar to that in the special case in type $C$.

In order to enumerate all possible subalgebras $\kk$ we use the classification of reductive root subalgebras given in the fundamental paper \cite{Dynkin:Semisimple}. The list of possible proper root subalgebras $\kk$ is very short (it contains respectively 22, 19, 42, and 75 entries for $F_4$, $E_6$, $E_7$ and $E_8$\arxivVersion{, see section \ref{secRootSATables} in the appendix}). For a fixed $\kk$, we use all automorphisms of $\Delta(\gG)$ which preserve $\Delta(\bb\cap\kk)$\arxivVersion{ (see section \ref{secCardinalitiesIsos} in the appendix)} in order to generate only pairwise non-conjugate subalgebras $\lL$ and thus further decrease the size of the computation.


The following is an observation that is helpful in the proof of Lemma \ref{leBigTedious} (\textit{cf.} \cite[Lemma 5.4]{PSZ}).
\begin{lemma}\label{leminrelationgeneral}
Let the cone condition fail and let us have a minimal relation \refeq{eqRelation}. Then
\begin{itemize}
\item[(a)] The relation has the form 
\[
\alpha_1+\alpha_2=\beta_1,
\] 
or 
\item[(b)] $\alpha_i+\alpha_j$ is not a root for all $i,j$.
\end{itemize}
\end{lemma}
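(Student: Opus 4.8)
The plan is to prove the contrapositive: assuming condition (b) fails, I will show the relation must have the form (a). So suppose $\delta\eqdef\alpha_i+\alpha_j\in\Delta(\gG)$ for some indices $i,j$; since the ambient root systems are reduced, $2\alpha_i\notin\Delta(\gG)$, so necessarily $i\neq j$ (and $\alpha_i\neq\alpha_j$, as there are no repeating summands). The whole argument is then a case analysis according to which stratum of the partition $\Delta(\gG)=\Delta^+(\kk)\sqcup\Delta^-(\kk)\sqcup\Delta(\nn)\sqcup(\Delta(\gG)\backslash\Delta(\lL))$ contains $\delta$. I will show that every stratum except $\Delta(\nn)$ contradicts minimality, and that $\delta\in\Delta(\nn)$ forces the relation to be $\alpha_1+\alpha_2=\beta_1$.

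First the easy eliminations. If $\delta\in\Delta^-(\kk)$, then $-\delta\in\Delta^+(\kk)$ and $\alpha_i+(-\delta)=-\alpha_j$ is a root, contradicting the $\bb\cap\kk$-singularity of $\alpha_i$. Next, applying \refle{lestreakOfRoots} to $\alpha_i,\alpha_j,\gamma$ for each $\gamma\in\Delta^+(\kk)$: since $\alpha_i+\gamma$ and $\alpha_j+\gamma$ are neither roots nor zero (singularity), the sum $\delta+\gamma=\alpha_i+\alpha_j+\gamma$ cannot be a root different from $\gamma$; as $\delta\neq0$ this rules out $\delta+\gamma\in\Delta(\gG)$, and the remaining possibility $\delta+\gamma=0$ is exactly the case $\delta\in\Delta^-(\kk)$ already excluded. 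Hence $\delta$ itself satisfies the singularity condition $\delta+\gamma\notin\Delta(\gG)\cup\{0\}$ for all $\gamma\in\Delta^+(\kk)$. Consequently, if $\delta\in\Delta(\gG)\backslash\Delta(\lL)$, then $\delta\in\hw_{\bb\cap\kk}(\gG/\lL)$ is a legitimate left-hand summand, and replacing $\alpha_i+\alpha_j$ by $\delta$ produces a relation \refeq{eqRelation} of length $\sum a_m-1$, contradicting minimality.

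The crux — and the step I expect to be the main obstacle — is ruling out $\delta\in\Delta^+(\kk)$, where the one-step replacement above is unavailable because $\delta$ is not a weight of $\gG/\lL$. Here I would use the $\kk$-module structure of $\nn$ together with the coroot pairing. Singularity gives $\alpha_m+\delta\notin\Delta(\gG)\cup\{0\}$ for every $m$, whence $\langle\alpha_m,\delta\rangle\geq0$ for all $m$; examining the $\delta$-string through $\alpha_i$ (top at $\alpha_i$ since $\alpha_i+\delta\notin\Delta(\gG)$, and $\alpha_i-\delta=-\alpha_j\in\Delta(\gG)$) shows the pairing $2\langle\alpha_i,\delta\rangle/\langle\delta,\delta\rangle$ equals $1$, and likewise for $j$. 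Therefore $2\langle\omega,\delta\rangle/\langle\delta,\delta\rangle\geq a_i+a_j\geq2>0$, so on the right-hand side some $\beta_m$ satisfies $2\langle\beta_m,\delta\rangle/\langle\delta,\delta\rangle\geq1$, i.e. $\beta_m-\delta\in\Delta(\gG)$; since $-\delta\in\Delta^-(\kk)$ and $\nn$ is $\kk$-stable, in fact $\beta_m-\delta\in\Delta(\nn)$. Replacing one copy of $\beta_m$ by $\beta_m-\delta$ exhibits $\omega-\delta$ as an element of $\Cone_\ZZ(\Delta(\nn))$, while on the left $\omega-\delta=\sum_m a_m\alpha_m-\alpha_i-\alpha_j$ has $\alpha$-length $\sum a_m-2$. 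As $\omega-\delta\neq0$ (otherwise $\omega=\delta\in\Delta(\kk)$ would pair to $0$ with the functional of \refle{leNilradicalIsSeparate}, impossible for a nonzero element of $\Cone_\ZZ(\Delta(\nn))$), this is a strictly shorter relation — contradiction.

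Finally, with all other strata excluded, $\delta\in\Delta(\nn)$, and then $\alpha_i+\alpha_j=\delta$ is itself a relation \refeq{eqRelation} of length $2$; minimality forces $\sum a_m=2$, so the left-hand side is exactly $\alpha_1+\alpha_2$ (after relabeling) and $\omega=\delta$ is a single root. To finish, I would show the right-hand side is a single summand by a short induction on $\sum b_m$: if the support had total multiplicity $\geq2$, then no pairwise sum being a root gives $\langle\beta_{m_0},\delta\rangle>0$ for each $\beta_{m_0}$, so $\delta-\beta_{m_0}$ is again a root realized as a shorter nonnegative combination of $\Delta(\nn)$ with no two summands adding to a root; the induction collapses this to the case $\delta=\beta_{m_0}+\beta'$, where $\beta_{m_0}+\beta'=\delta\in\Delta(\gG)$ violates minimality. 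Hence the right-hand side is the single root $\beta_1=\delta$, and the relation reads $\alpha_1+\alpha_2=\beta_1$, which is precisely (a).
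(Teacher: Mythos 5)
Your proposal is correct and follows essentially the same route as the paper's proof: the identical four-way case analysis on where $\delta=\alpha_i+\alpha_j$ lies, with the $\Delta^-(\kk)$ case killed by singularity, the $\Delta(\gG)\backslash\Delta(\lL)$ case handled via Lemma \ref{lestreakOfRoots} and a length-reducing substitution, and the $\Delta^+(\kk)$ case eliminated by the same positivity-of-scalar-products argument (your string computation $\langle\omega,\delta\rangle\geq\frac{1}{2}(a_i+a_j)\langle\delta,\delta\rangle$ is just a repackaging of the paper's $\langle\gamma,\sum_i\beta_i\rangle=\langle\gamma,\gamma\rangle+\langle\gamma,\alpha_3+\dots\rangle>0$), followed by absorbing $-\delta$ into some $\beta_m$ using $\kk$-stability of $\nn$. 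If anything, your write-up is more complete than the paper's at the very end: the paper stops once $\alpha_i+\alpha_j\in\Delta(\nn)$ is established, whereas you make explicit why minimality then forces the left-hand side to be exactly $\alpha_1+\alpha_2$ and, via the induction on $\sum b_m$, why the right-hand side must collapse to a single root $\beta_1$.
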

\begin{proof}
Pick a minimal relation \refeq{eqRelation}. Suppose there exist indices $i,j$ such that $\gamma:=\alpha_i+\alpha_j$ is a root. We claim that $\gamma\in \Delta(\nn)$. Indeed, assume the contrary. First, suppose $\gamma\in\Delta^-(\kk)$. Then $\alpha_i$ and $\alpha_j$ would both fail to be $\bb\cap\kk$-singular. 

Second, suppose $\gamma\in\Delta^+(\kk)$. We prove that $\alpha_3+\dots+\alpha_l=\beta_1+\dots +\beta_k-\gamma$ is a shorter relation than \refeq{eqRelation}. Indeed, $\beta_1+\dots +\beta_k-\gamma$ is clearly non-zero (positive linear combination of elements of $\Delta(\nn)$ cannot be in the span of the roots of the semisimple part). By the $\bb\cap\kk$-singularity of the $\alpha_i$'s,   $\langle\gamma, \beta_1+\dots+ \beta_k\rangle =\langle\gamma,\alpha_1 +\dots +\alpha_k\rangle =\langle \gamma, \gamma \rangle +\langle \gamma, \alpha_3 +\dots + \alpha_k \rangle>0$ and therefore, for some index $i$,  $\langle\gamma,\beta_i\rangle>0$. This shows that $\beta_i-\gamma$ is a root, which therefore belongs to $\Delta(\nn)$. Contradiction.

Third, suppose $\gamma\notin\Delta(\gG)\backslash\Delta(\lL)$. Then $\gamma$ is $\bb\cap\kk$-singular - if $\gamma+ \delta = \alpha_1 + \alpha_2+\delta$ were a root for some $\delta\in\Delta^+(\kk)$, then \refle{lestreakOfRoots} would imply that $\alpha_1+\delta$ is also a root. Therefore we can shorten the relation \refeq{eqRelation} by replacing $\alpha_1+\alpha_2$ by $\gamma$, and the obtained relation is non-trivial since the right-hand side is not zero. Contradiction.

Therefore $\gamma\in \Delta(\nn)$, and our lemma is proved.
\end{proof}

\subsection{Minimal relations \refeq{eqRelation} in the classical Lie algebras}
The following lemma describes all minimal relations (\ref{eqRelation}) up to automorphisms of $\Delta(\gG)$.
\begin{lemma}\label{leBigTedious}
Let $\gG\simeq\so(2n),\so(2n+1),\sP(2n)$. Suppose $\lL=\kk\crplus\nn$ does not satisfy the cone condition. 
\begin{itemize}
\item A minimal relation \refeq{eqRelation} has length 2 (Definition \ref{defMinRelation}).
\item All possibilities for minimal relations \refeq{eqRelation}, up to an automorphism of $\Delta(\gG)$, are given in the following table.
\end{itemize}
\end{lemma}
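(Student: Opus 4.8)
The plan is to prove the two claims simultaneously by a case analysis on the type of $\gG$, organized around the structure established in Lemma \ref{leminrelationgeneral}. That lemma already dichotomizes any minimal relation into either the length-$2$ shape $\alpha_1+\alpha_2=\beta_1$ (case (a)), or a relation in which no two $\alpha_i$'s sum to a root (case (b)). My first step is to show that in the classical types case (b) collapses to length $2$ as well, i.e. that a minimal relation in which the $\alpha_i$'s are pairwise non-summable cannot have three or more terms on the left. The key leverage here is the concrete coordinate description of the positive roots fixed in the Preliminaries: in types $B,C,D$ every root is of the form $\pm\eps_i\pm\eps_j$ (or $\pm\eps_i$, or $\pm 2\eps_i$), so a relation $\sum a_i\alpha_i=\sum b_j\beta_j$ is a bookkeeping identity among the coordinates $\eps_1,\dots,\eps_n$. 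I would track, coordinate by coordinate, how the $\eps_i$-entries must cancel, using the $\bb\cap\kk$-singularity of the $\alpha_i$ (they lie in $\hw_{\bb\cap\kk}(\gG/\lL)$) and the membership $\beta_j\in\Delta(\nn)$ to constrain which sign patterns can appear. The pairwise-non-summability condition $\langle\alpha_i,\alpha_j\rangle\geq 0$ forces the supports of the $\alpha_i$ to overlap only in a very restricted way, and combined with the requirement that the total weight $\omega$ also decompose over $\Delta(\nn)$ this should rule out length $\geq 3$.

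Second, having reduced to length $2$, I would enumerate the possible shapes of $\alpha_1+\alpha_2=\beta_1$ (or $a_1\alpha_1=b_1\beta_1$ with a single term, handled separately in type $C$ where $2\eps_i$ and $\eps_i+\eps_j$ coexist). Here the enumeration is again purely in terms of coordinate patterns: $\beta_1\in\Delta(\nn)$ is a single root, so writing $\beta_1=\eps_p\pm\eps_q$ (type $B,D$) or $\beta_1\in\{2\eps_p,\eps_p\pm\eps_q\}$ (type $C$), I would list all ways of splitting it as a sum of two roots $\alpha_1,\alpha_2$ that are each $\bb\cap\kk$-singular with respect to some admissible $\kk$. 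The constraints from Definition \ref{defCone} on $\hw_{\bb\cap\kk}(\gG/\lL)$ — namely $\alpha_i+\delta\notin\Delta(\gG)$ for all $\delta\in\Delta(\bb)\cap\Delta(\kk)$ — translate into conditions on how the coordinates of $\alpha_i$ relate to $\Delta(\kk)$, and I would normalize each configuration up to an automorphism of $\Delta(\gG)$ (permutations and sign changes of the $\eps_i$, subject to preserving type) to collapse the list to the finitely many entries of the table. The short length-independence observed in \cite{PSZ} for type $A$ is the model: the minimal length is always $2$ regardless of rank because only two or three coordinates are ever involved.

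The main obstacle I expect is type $C$. The simultaneous presence of the long roots $2\eps_i$ and the short roots $\eps_i\pm\eps_j$ means a single-term left side $a_1\alpha_1=b_1\beta_1$ can occur (e.g. $2(\eps_i)$-type phenomena via $\eps_i+\eps_j$ versus $2\eps_i,2\eps_j$), so the clean ``length exactly $2$ with two distinct summands'' picture needs care, and it is exactly in type $C$ that the strong-orthogonality property will later fail for one relation, as flagged in the text preceding the lemma. I would therefore treat type $C$ last and most carefully, checking each coordinate-cancellation branch against the possibility of a length-$2$ relation of the form $2\eps_i=(\eps_i-\eps_j)+(\eps_i+\eps_j)$ and its variants, and verifying that after reduction to a minimal relation (no repeated summands, no two $\beta$'s summing to a root, per Definition \ref{defMinRelation}) every surviving configuration appears in the table. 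Throughout, the verification that the list is exhaustive up to $\Delta(\gG)$-automorphism is a finite but delicate check; I would present it as a systematic sweep over the coordinate supports rather than attempt a slicker uniform argument, since the three types genuinely differ in their root-length structure.
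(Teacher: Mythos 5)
Your overall strategy --- coordinate bookkeeping in the $\eps_i$-basis driven by minimality, the $\bb\cap\kk$-singularity of the $\alpha_i$'s, the membership $\beta_j\in\Delta(\nn)$, and \refle{leminrelationgeneral}, followed by normalization up to automorphisms of $\Delta(\gG)$ --- is indeed the paper's strategy. However, your second step contains a genuine gap: you assume that once the left-hand side has length $2$, the right-hand side is a single root $\beta_1$, and you propose to enumerate the ways of ``splitting'' $\beta_1$ as a sum $\alpha_1+\alpha_2$ of singular roots. This is false, and false in a way that would leave your table badly incomplete. The length of a relation (Definition \ref{defMinRelation}) counts only the left-hand coefficients $\sum_i a_i$; nothing bounds the number of terms on the right, and the weight $\omega=\alpha_1+\alpha_2$ is in general not a root. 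Indeed the table contains minimal relations of the forms $\alpha_1+\alpha_2=\beta_1+\beta_2$ (relations \refeq{eqDcontainsA3'}, \refeq{eqDcontainsD3'}, \refeq{eqCcontainsA3'}), $\alpha_1+\alpha_2=\beta_1+\beta_2+\beta_3$ (relation \refeq{eqDcontainsD4'}), $2\alpha_1=\beta_1+\beta_2+\beta_3+\beta_4$ (relation \refeq{eqDcontainsD4''}), and $2\alpha_1=2\beta_1+\beta_2+\beta_3$ (relation \refeq{eqBcontainsB3'''}). The hard content of the proof is precisely the coordinate-cancellation analysis when several $\beta_j$'s are present: e.g.\ in type $D$, starting from $\alpha_1=\eps_1+\eps_2$, $\beta_1=\eps_1+\eps_3$, $\beta_2=\eps_2+\eps_4$, one must split into subcases according to whether $\eps_3$ and $\eps_4$ enter $\omega$ with positive or non-positive coefficients, and the three resulting branches produce three distinct table entries. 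Your plan skips all of this.

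Two smaller points. First, your parenthetical ``$a_1\alpha_1=b_1\beta_1$ with a single term'' describes relations that cannot occur: in a reduced root system a relation with one root on each side forces $\alpha_1$ to be a positive multiple of $\beta_1$, hence $\alpha_1=\beta_1$, contradicting $\alpha_1\notin\Delta(\lL)$ and $\beta_1\in\Delta(\nn)\subset\Delta(\lL)$. The actual type-$B$ and type-$C$ subtleties involve doubled coefficients on one side but never single-single, e.g.\ $\alpha_1+\alpha_2=2\beta_1$ and $2\alpha_1=\beta_1+\beta_2$. Second, your claim that pairwise non-summability ($\langle\alpha_i,\alpha_j\rangle\geq 0$) together with support considerations ``should rule out length $\geq 3$'' is optimistic as stated: the paper's reduction to length $2$ is not a support-overlap argument but a chain of replacements, each requiring one to locate auxiliary roots in $\Delta(\gG)\backslash\Delta(\lL)$, decide whether they are $\bb\cap\kk$-singular, and exhibit a strictly shorter relation; this is where \refle{lestreakOfRoots} and the singularity hypotheses do real work, and it cannot be separated from the multi-term right-hand-side analysis you deferred. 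Without carrying out that analysis, both bullet points of the lemma remain unproved.
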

\noindent \begin{longtable}{rcl | c |c c}
\multicolumn{3}{c|}{$\omega$} & \begin{tabular}{p{4.5cm}}Scalar products. All non-listed scalar products are zero. All roots, unless stated otherwise, are assumed long in types $B,D$ and short in type $C$.\end{tabular}&  \multicolumn{2}{p{2cm}}{The roots from the relation generate }
\endhead
\hline
\multicolumn{5}{c}{$\gG\simeq\so(2n)$}
\\\hline
$\alpha_1+\alpha_2$ &$=$&$\beta_1$&
\begin{tabular}{l}
$\langle\alpha_1, \beta_1 \rangle$ $=$ $\langle\alpha_2,\beta_1\rangle$ $=1$\\
$\langle\alpha_1,\alpha_2\rangle$ $=-1$
\end{tabular}&
 $A_2$&
 \refstepcounter{equation} \label{eqDcontainsA2'} (\ref{eqDcontainsA2'})\\\hline
$\alpha_1+\alpha_2$ &$=$&$\beta_1+\beta_2$&
\begin{tabular}{l}
$\langle\alpha_1,\beta_1\rangle$ $=$ $\langle\alpha_1,\beta_2\rangle$ $=$\\ 
$\langle\alpha_2,\beta_1\rangle$ $=$ $\langle\alpha_2,\beta_2\rangle$ $=1$ 
\end{tabular}&
\begin{tabular}{l}
 $A_3\subset A_4$,\\ $n\geq 5$ 
\end{tabular}& \refstepcounter{equation} \label{eqDcontainsA3'} (\ref{eqDcontainsA3'})
\\\hline
$\alpha_1+\alpha_2$ &$=$&$\beta_1+\beta_2+\beta_3$&
\begin{tabular}{l}
$\langle\alpha_1,\alpha_2\rangle$ $=$  $\langle\alpha_1,\beta_1\rangle$ $=$\\
$\langle\alpha_1,\beta_2\rangle$ $=$ $\langle\alpha_1,\beta_3\rangle$ $=$ \\
$\langle\alpha_2,\beta_1\rangle$ $=$ $\langle\alpha_2,\beta_2\rangle$ $=$ \\
$\langle\alpha_2,\beta_3\rangle$ $=1$
\end{tabular}& 
$D_4$&
\refstepcounter{equation} \label{eqDcontainsD4'} (\ref{eqDcontainsD4'})
\\\hline
$2\alpha_1$ &$=$&$\beta_1+\beta_2+\beta_3+\beta_4$&
\begin{tabular}{l}
$\langle\alpha_1,\beta_1\rangle$ $=$ $\langle\alpha_1,\beta_2\rangle$ $=$ \\
$\langle\alpha_1,\beta_3\rangle$ $=$ $\langle\alpha_1,\beta_4\rangle$ $=1$ 
\end{tabular}&
$D_4$&
\refstepcounter{equation} \label{eqDcontainsD4''} (\ref{eqDcontainsD4''})\\\hline
$\alpha_1+\alpha_2$ &$=$&$\beta_1+\beta_2$&
\begin{tabular}{l}
$\langle\alpha_1,\beta_1\rangle$ $=$ $\langle\alpha_1,\beta_2\rangle$ $=$ \\
$\langle\alpha_2,\beta_1\rangle$ $=$ $\langle\alpha_2,\beta_2\rangle$ $=1$ 
\end{tabular}&
 $A_3$\footnote{\cite[Table 9]{Dynkin:Semisimple} uses the notation ``$D_3$'' for such subalgebras. $D_3$ is defined as a root subsystem of type $A_3$ of root system of type $B$ or $D$, which cannot be extended to a root subsystem of type $A_4$.}
&
\refstepcounter{equation} \label{eqDcontainsD3'} (\ref{eqDcontainsD3'}) \\\hline
\multicolumn{5}{c}{$\gG\simeq\so(2n+1)$}\\\hline
\multicolumn{3}{c|}{\begin{tabular}{c} all relations \\ listed for $\so(2n)$\end{tabular}}& - & - \\\hline
$\alpha_1+\alpha_2$ &$=$&$\beta_1$&
\begin{tabular}{l}
$\langle\alpha_1, \beta_1 \rangle$ $=$ $\langle\alpha_2,\beta_1\rangle$ $=1$,\\
$\|\alpha_1\|=\|\alpha_2\|=1 $ 
\end{tabular}&
 $B_2$ &
 \refstepcounter{equation} \label{eqBcontainsB2'} (\ref{eqBcontainsB2'})\\\hline
$\alpha_1+\alpha_2$ &$=$&$\beta_1$&  
\begin{tabular}{l}
$\langle\alpha_1, \alpha_2 \rangle$ $=-1$, $\langle\alpha_2,\beta_1\rangle$ $=1$, \\
$\|\alpha_1\|=\|\beta_1\|=1 $ 
\end{tabular}&
 $B_2$&
\refstepcounter{equation} \label{eqBcontainsB2''} (\ref{eqBcontainsB2''}) \\\hline
$2\alpha_1$ &$=$&$\beta_1+\beta_2$&  
\begin{tabular}{l}
$\|\alpha_1\|=1, \langle\alpha_1,\beta_1\rangle=\langle\alpha_1,\beta_2\rangle=1$ 
\end{tabular}&
 $B_2$&
\refstepcounter{equation} \label{eqBcontainsB2'''} (\ref{eqBcontainsB2'''}) \\\hline
$\alpha_1+\alpha_2$ &$=$&$2\beta_1$&  
\begin{tabular}{l}
$\|\beta_1\|=1, \langle\alpha_1,\beta_1\rangle=\langle\alpha_2,\beta_1\rangle=1$ 
\end{tabular}&
 $B_2$&
 \refstepcounter{equation} \label{eqBcontainsB2''''} (\ref{eqBcontainsB2''''}) \\\hline
$\alpha_1+\alpha_2$ &$=$&$\beta_1+\beta_2$&  
\begin{tabular}{l}
$\langle\alpha_1, \beta_1 \rangle=$  $\langle\alpha_2,\beta_1\rangle=$ $\langle\alpha_2,\beta_2\rangle$ $=1$, \\
$\|\alpha_1\|=\|\beta_2\|=1 $ 
\end{tabular}&
 $B_3$&
\refstepcounter{equation} \label{eqBcontainsB3'} (\ref{eqBcontainsB3'}) \\\hline
$\alpha_1+\alpha_2$ &$=$&$2\beta_1+\beta_2$&  
\begin{tabular}{l}
$\langle\alpha_1, \alpha_2 \rangle=$ $\langle\alpha_1,\beta_1\rangle=$ $\langle\alpha_2,\beta_1\rangle$ $=1$, \\
$\langle\alpha_1,\beta_2\rangle=$ $\langle\alpha_2,\beta_2\rangle=$ $\|\beta_1\|$ $=1$, 
\end{tabular}&
 $B_3$&
 \refstepcounter{equation} \label{eqBcontainsB3''} (\ref{eqBcontainsB3''}) \\\hline
$2\alpha_1$ &$=$&$2\beta_1+\beta_2+\beta_3$&  
\begin{tabular}{l}
$\langle\alpha_1, \beta_1 \rangle=$ $\langle\alpha_1,\beta_2\rangle=$ $\langle\alpha_1,\beta_3\rangle$ $=1$, \\
$\|\beta_1\|=1 $ 
\end{tabular}&
$B_3$&
\refstepcounter{equation} \label{eqBcontainsB3'''} (\ref{eqBcontainsB3'''}) \\\hline
\multicolumn{5}{c}{$\gG\simeq\sP(2n)$}\\\hline
$\alpha_1+\alpha_2$ &$=$&$\beta_1$&
\begin{tabular}{l}
$\langle\alpha_1, \beta_1 \rangle$ $=$ $\langle\alpha_2,\beta_1\rangle$ $=1$\\
$\langle\alpha_1,\alpha_2\rangle$ $=-1$
\end{tabular}&
$A_2$&
\refstepcounter{equation} \label{eqCcontainsA2'} (\ref{eqCcontainsA2'})\\\hline
$\alpha_1+\alpha_2$ &$=$&$\beta_1$&
\begin{tabular}{l}
$\langle\alpha_1, \beta_1 \rangle$ $=$ $\langle\alpha_2,\beta_1\rangle$ $=2$,\\
$\|\beta_1\|=2$
\end{tabular}&
$C_2$&
\refstepcounter{equation} \label{eqCcontainsC2'} (\ref{eqCcontainsC2'}) \\\hline
$\alpha_1+\alpha_2$ &$=$&$2\beta_1$&
\begin{tabular}{l}
$\langle\alpha_1,\beta_1\rangle=\langle\alpha_2,\beta_1\rangle$ $=1$ \\
$\|\alpha_1\|=\|\alpha_2\|$ $=2$
\end{tabular}& 
$C_2$&
\refstepcounter{equation} \label{eqCcontainsC2''} (\ref{eqCcontainsC2''}) \\\hline
$\alpha_1+\alpha_2$ &$=$&$\beta_1$&
\begin{tabular}{l}
$\langle\alpha_1,\beta_1\rangle=2$, $\langle\alpha_2,\alpha_1\rangle=-1$ \\
$\|\alpha_1\|$ $=2$
\end{tabular}& 
$C_2$&
\refstepcounter{equation} \label{eqCcontainsC2'''} (\ref{eqCcontainsC2'''}) \\\hline
$\alpha_1+\alpha_2$ &$=$&$\beta_1+\beta_2$&
\begin{tabular}{l}
$\langle\alpha_1,\beta_1\rangle$ $=$ $\langle\alpha_1,\beta_2\rangle$ $=$ \\
$\langle\alpha_2,\beta_1\rangle$ $=$ $\langle\alpha_2,\beta_2\rangle$ $=1$ 
\end{tabular}& 
 $A_3$&
 \refstepcounter{equation} \label{eqCcontainsA3'} (\ref{eqCcontainsA3'})\\\hline
$\alpha_1+\alpha_2$ &$=$&$\beta_1+\beta_2$&
\begin{tabular}{l}
$\langle\alpha_2,\beta_1\rangle$ $=$ $\langle\alpha_2,\beta_2\rangle$ $=2$,\\
$\langle\alpha_1,\beta_1\rangle=\langle\alpha_1,\beta_1\rangle$ $=$ \\
$\langle\alpha_1,\beta_2\rangle=1$,  $\langle\beta_1,\beta_2\rangle=1$, \\
$\|\alpha_2\|=2$ 
\end{tabular}& 
 $C_3$&
 \refstepcounter{equation} \label{eqCcontainsC3'} (\ref{eqCcontainsC3'})\\\hline
$\alpha_1+\alpha_2$ &$=$&$\beta_1+\beta_2$&
\begin{tabular}{l}
$\langle\alpha_1,\beta_2\rangle$ $=$ $\langle\alpha_1,\beta_2\rangle$ $=2$, \\
$\langle\alpha_1,\alpha_2\rangle=\langle\alpha_1,\beta_1\rangle$ $=$ \\
$\langle\alpha_2,\beta_1\rangle=1$, $\|\beta_2\|=2$ 
\end{tabular}& 
 $C_3$&
 \refstepcounter{equation} \label{eqCcontainsC3''} (\ref{eqCcontainsC3''})\\\hline
\end{longtable}

\begin{proof}Pick a minimal relation \refeq{eqRelation} of the form $\omega:=a_1\alpha_1+\dots+a_l\alpha_l=b_1\beta_1+\dots+b_k\beta_k$ (see Definition \ref{defMinRelation}).

Throughout this proof, we will use the informal expression ``$\pm\eps_{i}$ appears with a positive (resp. non-positive) coefficient in the weight $\omega$'' to describe the $\pm\eps_i$-coordinate of $\omega$ in the basis $\{\eps_1,\dots, \eps_{i-1},\pmf{i}\eps_i, \eps_{i+1}, \dots, \eps_n\}$. 

\subsubsection{$\gG\simeq\so(2n)$}
\smallskip\noindent Case 1. There exists an index $i$, such that $\alpha_i =\pmf{j_1} \eps_{j_1} +(\pmf{j_2} \eps_{j_2})$, $j_1\neq j_2$ and both $\pmf{j_1} \eps_{j_1}$ and $\pmf{j_2}\eps_{j_2}$ appear with a positive coefficient in $\omega$. Without loss of generality we may assume $i=1$ and $\alpha_1= \eps_{1}+\eps_{2}$. Therefore there exist $\beta_{1}$ and $\beta_{2}$ on the right-hand side of the relation with $\beta_1= \eps_{1} +(\pmf{j_3}\eps_{j_3})$ and $\beta_2=\eps_{2}+(\pmf{j_4} \eps_{j_4})$. The minimality of the relation implies $\{1, 2\}\cap\{j_3,j_4\}=\emptyset $. The latter allows us to assume without loss of generality that $\beta_1= \eps_{1} +\eps_{3}$.
\[
\underbrace{\eps_{1}+\eps_{2}}_{\alpha_1}+\dots= \underbrace{\eps_{1}+\eps_{3}}_{\beta_1}+\underbrace{ \eps_{2} +\eps_{j_3}}_{\beta_2}+\dots .
\]
We will now prove $j_4\neq 3$. 

Assume on the contrary that $3=j_4$. As the relation is minimal, the choice of $\pmf{j_4}$ sign must be such that $\eps_{3}= \pmf{j_4} \eps_{j_4}$. The minimality of the relation implies that there can be no cancellation of the weight $\eps_{3}$ on the right-hand side. Therefore on the left-hand side there exists a root, say $\alpha_2$, such that $\alpha_2=\eps_{3}+(\pmf{j_5}\eps_{j_5})$, $j_5\neq 3$. The minimality of the relation implies that in addition $j_5\neq 1,2$. Thus we can assume without loss of generality that $j_5=5$ and $\alpha_2=\eps_3+\eps_5$. So far, the assumption that $j_4=3$ implies that the relation has the form
\begin{equation}\label{eqConesso(2n)}
\underbrace{\eps_{1}+\eps_{2}}_{\alpha_1}+\underbrace{ \eps_{3}+\eps_{5}}_{\alpha_{2}}+\underbrace{\dots}_{\gamma}= \underbrace{\eps_{1}+\eps_{3}}_{\beta_1}+\underbrace{ \eps_{2} +\eps_{3}}_{\beta_2}+\underbrace{\dots}_{\delta},
\end{equation}
where $\gamma$ and $\delta$ denote the omitted summands. Suppose at least one of the roots $\eps_{1}+\eps_{5}$ and $\eps_{2} +\eps_{5}$ belongs to $\Delta(\nn)$. Without loss of generality we may assume $\eps_{1} +\eps_{5} \in\Delta(\nn)$. Then the relation $\alpha_1+\alpha_2= \beta_{2}+ \eps_{ 1} +\eps_{5}$ is shorter than \refeq{eqConesso(2n)}. Contradiction. Suppose at least one of the roots $\eps_{1}+\eps_{5}$ and $\eps_{2}+\eps_{5}$ belongs to $\Delta(\kk)$. Without loss of generality we may assume $\eps_{1}+\eps_{5}\in\Delta(\kk)$. Then $\eps_{3} - \eps_{5}= \beta_1 -( \eps_{1} + \eps_{5}) \in \Delta(\nn)$ and the relation $\gamma= \eps_{3} - \eps_{5}+\delta$ is shorter than \refeq{eqConesso(2n)}. The latter relation is non-trivial since the right-hand side is a positive linear combination of roots of $\Delta(\nn)$. Contradiction.

So far we proved that $\eps_{1}+\eps_{5},\eps_{2} +\eps_{5}$ do not belong to $\Delta(\lL)$. If $\eps_{1} + \eps_{5}$ were a $\bb\cap\kk$-singular weight, we could replace $\alpha_1+\alpha_2$ by $\eps_{1} + \eps_{5}$ and remove $ \eps_{ 2} + \eps_{3}$ on the right-hand side of \refeq{eqConesso(2n)}, shortening the initial relation. Similarly, we reason that $\eps_{2}+\eps_{5}$ is not a $\bb\cap\kk$-singular weight. In order for $\eps_{ 1}+\eps_{5}$ not to be $\bb\cap\kk$-singular, there must exist an index $k$ and a choice of sign for which one of $\pmf{k}\eps_{k} -\eps_{ 1}$ and $\pmf{k}\eps_{k}-\eps_{5}$ is a positive root of $\kk$. Similarly, there exists an index $l$ and a choice of sign for which one of $\pmf{l}\eps_{l} -\eps_{ 2}$ and $\pmf{l}\eps_{l}-\eps_{5}$ is a positive root of $\kk$. As $\alpha_1$ and $ \alpha_2 $ are $\bb\cap\kk$-singular, a short consideration shows that the only possibility is $\pmf{k}\eps_k=-\eps_2$ and $\pmf{l}\eps_l=\eps_{3}$. Therefore $\beta_3 := \beta_2 -(\eps_{3}-\eps_{5})\in \Delta(\nn)$. Finally, we obtain the relation $\alpha_1+\alpha_2=\beta_1+\beta_3$ which is shorter than \refeq{eqConesso(2n)}. Contradiction.

So far, we have proved that $3\neq j_4$. Therefore we can assume without loss of generality that $\beta_2=\eps_2+\eps_4$. We have now established that the relation has the form
\[
\underbrace{\eps_{1}+\eps_{2}}_{\alpha_1}+\dots= \underbrace{\eps_{1}+\eps_{3}}_{\beta_1}+\underbrace{ \eps_{2} +\eps_{4}}_{\beta_2}+\underbrace{\dots}_{\mathrm{zero~allowed}}.
\]
\noindent Case 1.1. $\eps_{3}$ and $\eps_{4}$ both appear with positive coefficients in $\omega$. We claim that $\alpha_2:= \eps_{3}+ \eps_{4} \in \Delta(\gG)\backslash\Delta(\lL)$. Indeed, first, $\alpha_1=(\beta_1-\alpha_2)+\beta_2$ implies that $\alpha_2 \notin\Delta(\kk)$. Second, if $\alpha_2\in\Delta(\nn)$, we could remove $\alpha_1$ on the left-hand side of the relation and substitute $\beta_1+\beta_2$ by $\alpha_2$ to get a relation shorter than the initial one.

We will now prove that $\alpha_2$ is $\bb\cap\kk$-singular.

Assume on the contrary that there exists $\delta\in\Delta^+(\kk)$ such that $\alpha_2+\delta$ is a root. Then $\delta$ is either of the form $\pmf{k}\eps_{k}-\eps_{3}$ or $\pmf{k}\eps_{k}-\eps_{4}$; without loss of generality we may assume that $\delta=\pmf{k}\eps_{k}-\eps_{3}$. The requirement that $\eps_{3}$ and $\eps_{4}$ appear with positive coefficients in $\omega$ implies that there exist $\alpha_3$, $\alpha_4 \in\hw_{\bb\cap\kk}(\gG/\lL)$ such that $\alpha_3=\pmf{j_5}\eps_{j_5} +\eps_{3}$, $\alpha_4= \pmf{j_6} \eps_{j_6}+\eps_{4}$, $\{j_5,j_6\} \cap\{3,4\} = \emptyset$, $1\neq j_5$, and $2\neq j_6$. Furthermore, the preceding assumptions imply that there are at least three distinct roots on the left-hand side of the relation. Since $\alpha_3$ is $\bb\cap\kk$-singular, we have $j_5=k$ and $\delta= \pmf{k} \eps_{k} -\eps_{3} =  \pmf{j_5} \eps_{j_5}-\eps_{ 3}$. Then $\eps_{1}+ (\pmf{j_5} \eps_{j_5}) = \beta_1 + \delta \in \Delta(\nn)$ and therefore $k=j_5\neq 2$. We can now assume without loss of generality that ${j_5}=5$ and the choice of $\pm$ signs is such that $\alpha_3=\eps_5+\eps_3$ and $\delta=\eps_5-\eps_3\in\Delta^+(\kk)$. So far, the assumption that $\alpha_2$ is not $\bb\cap\kk$-singular implies that the relation has the form
\[
\underbrace{\eps_{1}+\eps_{2}}_{\alpha_1}+\underbrace{\eps_{3}+\eps_{5}}_{\alpha_3} +\underbrace{ \eps_{4}+ (\pmf{j_6} \eps_{j_6})}_{\alpha_4} +\underbrace{\dots}_{\mathrm{zero~allowed}} = \underbrace{\eps_{1} + \eps_{3} }_{\beta_1}+\underbrace{ \eps_{2} +\eps_{4}}_{\beta_2}+\underbrace{\dots}_{\mathrm{zero~allowed}}.
\]
We have that $\eps_{5} +\eps_{4}=\alpha_2+\delta\in\Delta(\gG)\backslash\Delta(\lL)$. We claim that $\eps_{5} +\eps_{4}$ is not $\bb\cap\kk$-singular: indeed, otherwise the relation $\alpha_1+\eps_{5} +\eps_{4}= \underbrace{\eps_{1} +\eps_{5}}_{\in\Delta(\nn)} +\beta_2$ would be shorter than the initial one. Therefore there is a root $\delta'\in \Delta^+(\kk)$ such that $\delta'+\eps_{5} +\eps_{4}$ is a root. The $\bb\cap\kk$-singularity of $\alpha_4$ together with $\delta=\eps_5-\eps_3\in\Delta^+(\kk)$ imply that $\delta' = \pmf{j_6}\eps_{j_6}-\eps_{4}$. Therefore $\eps_{2}+(\pmf{j_6} \eps_{j_6}) \in \Delta(\nn)$ and if the weight $\eps_{5}+ (\pmf{j_6}\eps_{j_6})$ is a root, it belongs to $\Delta(\gG) \backslash\Delta(\lL)$. We can write 
\begin{equation}\label{eqTempso(2n)} 
\alpha_1+\eps_{5}+(\pmf{j_6}\eps_{j_6})=\eps_{1}+\eps_{5} + \eps_{2}+ (\pmf{j_6} \eps_{j_6}). 
\end{equation} 
We will arrive at a contradiction for all possible choices of $j_6$. Indeed, if $5\neq j_6$, then $\eps_{5}+(\pmf{j_6}\eps_{j_6})$ is a root. The fact that  $\alpha_2,\alpha_3\in\hw_{\bb\cap\kk}(\gG/\lL)$ together with $\delta, \delta'\in \Delta^+(\kk)$ imply that $\eps_{5}+(\pmf{j_6}\eps_{j_6})$ is $\bb\cap\kk$-singular. Thus \refeq{eqTempso(2n)} is a relation of type (\ref{eqRelation}) which is shorter than the initial one. Contradiction. If $j_6=5$ and the choice of the sign $\pmf{j_6}$ is such that $\alpha_4=\eps_4-\eps_5 $, we get a contradiction as $-\eps_4+\eps_5= \delta' \in \Delta (\kk)$. Finally, if $\eps_{5}=\pmf{j_6}\eps_{j_6}$, then $\delta'':= -\delta+\delta'= \eps_{3}-\eps_{4}\in\Delta(\kk)$. Then depending on whether $\delta''$ is positive or negative we get a contradiction with the $\bb\cap\kk$-singularity of either $\alpha_4$ or $\alpha_3$.

We have now $\alpha_2\in\hw_{\bb\cap\kk}(\gG/\lL)$. Therefore the initial relation is $\alpha_1+\alpha_2=\beta_1+\beta_2$, of type \refeq{eqDcontainsA3'}.

\noindent Case 1.2. One of $\eps_{3}$, $\eps_{4}$ appears with positive coefficient in $\omega$ and the other with non-positive. Without loss of generality we may assume that $\eps_{4}$ appears with positive coefficient in $\omega$ and $\eps_3$ with non-positive. Then there exists a root on the right-hand side, say $\beta_3$, of the form $\pmf{k}\eps_{k}- \eps_{ 3}$. The minimality of the relation implies $\beta_3=\eps_{1} - \eps_{ 3}$. So far the relation is
\[
\underbrace{\eps_{1}+\eps_{2}}_{\alpha_1}+\dots=\underbrace{\eps_{1}+ \eps_{3}}_{\beta_1}+ \underbrace{\eps_{1}- \eps_{3}}_{\beta_3} + \underbrace{\eps_{2}+\eps_{4}}_{\beta_2}+\dots
\]
Now consider $\alpha_2:=\eps_{1}+\eps_{4}$. We claim, as in Case 1.1, that $\alpha_2\in\Delta(\gG)\backslash\Delta(\lL)$. Indeed, first, if we had that $\alpha_2\in\Delta(\nn)$, we could substitute $\beta_1+\beta_2+\beta_3$ by $\alpha_2$ on the right-hand side and remove $\alpha_1$ on the left-hand side to obtain a shorter relation than the initial one. Second, $\alpha_1=((\beta_1-\alpha_2)+\beta_2)+\beta_3$ implies $\alpha_2 \notin\Delta(\kk)$. 

Now, as in Case 1.1, we will show that $\alpha_2$ is $\bb\cap\kk$-singular. Indeed, assume the contrary. The fact that $\eps_{4}$ appears with positive coefficient in $\omega$ implies that on the left-hand side there is a $\bb\cap\kk$-singular weight, say $\alpha_3$, of the form $\alpha_3= \pmf{j_5}\eps_{j_5}+\eps_{ 4} $, where $j_5\neq 2$. 

We claim next that  $j_5\neq 1$. 

Indeed, first, if $\pmf{j_5}\eps_{j_5} =-\eps_{1}$, the relation $\alpha_1+\alpha_3=\beta_2$ is shorter than the initial one. Contradiction. Second, $\pmf{j_5}\eps_{j_5} =\eps_{1}$ contradicts the $\bb\cap\kk$-singularity of $\alpha_2$. Therefore $j_5\neq 1,2$ and we can assume without loss of generality that $j_5=5$ and $\alpha_3 =\eps_5 +\eps_4$. The assumption that $\alpha_2=\eps_{ 1}+\eps_{4}$ is not $\bb\cap\kk$-singular implies that there exists some index $l$ for which at least one of $\gamma:=\pmf{l}\eps_{l} -\eps_{4 }$ and $\delta:= \pmf{l} \eps_{l} - \eps_{1}$ belongs to $\Delta^+(\kk)$. The choice $\delta\in\Delta^+(\kk)$ contradicts the $\bb \cap \kk$-singularity of $\alpha_1$ unless $\delta=\eps_{2}-\eps_{1}$. The latter yields a contradiction as well, as it implies $\alpha_1 \in \Delta( \nn)$. The choice $\gamma \in \Delta^+(\kk)$ together with the $\bb \cap \kk$-singularity of $\alpha_3$ implies $\gamma= \eps_{5} -\eps_{ 4}$. Then $\beta_4 = \beta_2 +\gamma\in\Delta(\nn)$ and the relation $\alpha_1+ \alpha_3 = \beta_1+\beta_3+\beta_4$ is of type (\ref{eqRelation}) and is shorter than the initial one. Contradiction.

So far we have proved that $\alpha_2\in\hw_{\bb\cap\kk}(\gG/\lL)$. Therefore $\alpha_1+\alpha_2=\beta_1+\beta_3+\beta_2$ is the desired relation \refeq{eqDcontainsD4'}.

\noindent Case 1.3.  $\eps_{3}$ and $\eps_{4}$ both appear with non-positive coefficients in $\omega$. As $\eps_{3}$ and $\eps_{4}$ are canceled on the right-hand side without contradicting the minimality of the relation, we need to have $\beta_3:=\eps_{1}-\eps_{3}\in\Delta(\nn)$, $\beta_4: =\eps_{2}-\eps_{4}\in\Delta(\nn)$. Thus we have the desired relation \refeq{eqDcontainsD4''}.

\noindent Case 2. There is no index $i$ such that $\alpha_i=\pmf{j_1}\eps_{j_1}+(\pmf{j_2}\eps_{j_2})$ and both $\pmf{j_1} \eps_{j_1}$ and $\pmf{j_2} \eps_{j_2}$ appear with positive coefficients in $\omega$. As $\omega$ is non-trivial, it has at least one non-zero coordinate. Without loss of generality we may assume this to coordinate to be positive, corresponding to $\eps_1$. In addition, without loss of generality, assume that $\alpha_1 =\eps_{1} +\eps_{2}$. By our current assumption, $\eps_{2}$ appears in $\omega$ with non-positive coefficient. Then some $\alpha_i$, say $\alpha_2$, is of the form $\alpha_2 =-\eps_{2} +(\pmf{j_3} \eps_{j_3}) $. 

\noindent Case 2.1. $j_3\neq 1$. Without loss of generality we can assume that $j_3=3$ and $\alpha_2=-\eps_2+\eps_3$. Then $\beta_1:=\alpha_1+\alpha_2$ is a root and by \refle{leminrelationgeneral} we have the desired relation \refeq{eqDcontainsA2'}.

\noindent Case 2.2. $j_3=j_1$ and $\alpha_2=-\eps_{2}+\eps_{1} $. On the right-hand side, there is a root, say $\beta_1$, of the form $\beta_1=\eps_{1}+(\pmf{j_4}\eps_{j_4})$. A short consideration shows that $j_4\neq 1,2$, and so we assume without loss of generality that $\beta_1:=\eps_{1}+\eps_{4}$. The relation so far has the form
\[
\underbrace{\eps_{1}+\eps_{2}}_{\alpha_1} +\underbrace{(-\eps_{2}+\eps_{1})}_{\alpha_2}+\underbrace{\dots}_{\mathrm{allowed~to~be~zero}} =\underbrace{\eps_{1}+\eps_{4}}_{\beta_1}+\dots.
\]

We will now prove that $\eps_{4}$ appears with positive coefficient in $\omega$. Indeed, assume the contrary. Therefore there exists a root on the left-hand side, say $\alpha_3$, of the form $\alpha_3= \eps_{4}+ (\pmf{j_5}\eps_{j_5})$. By \refle{leminrelationgeneral} we get that $j_5\neq 1,2$, and therefore we can assume without loss of generality that $j_5=5$ and $\alpha_3= \eps_{4} +\eps_{5}$. By the requirement of Case 2, $\eps_5$ appears with a non-positive coefficient in $\omega$, and therefore there exists $\alpha_4=-\eps_5+(\pmf{j_6}\eps_{j_6})$. By \refle{leminrelationgeneral}, $\alpha_4+\alpha_3$ is not a root and therefore $\alpha_{4}= \eps_{4} -\eps_{5}$. Therefore we cannot have a shorter relation than
\begin{equation}\label{eqFakeRelationso(2n)}
\underbrace{\eps_{j_1}+\eps_{2}}_{\alpha_1}+ \underbrace{(-\eps_{2}+\eps_{1})}_{\alpha_2}+\underbrace{ \eps_{3}+ \eps_{4}}_{\alpha_3}+ \underbrace{(-\eps_{4}+\eps_{3})}_{\alpha_4}= 2(\underbrace{\eps_{1}+\eps_{3}}_{\beta_1}).
\end{equation}
We claim that the above expression cannot correspond to a minimal relation. Consider $\delta:= \eps_{1}+\eps_{4}$. First, the possibility $\delta \in \Delta(\kk)$ implies $\beta_1 \in\Delta(\gG)\backslash\Delta(\lL)$. Contradiction. Second, the possibility $\delta\in \Delta(\gG) \backslash \Delta(\lL)$ together with the $\bb \cap \kk$-singularity of $\alpha_1,\alpha_2,\alpha_3$ and $\alpha_4$ imply $\delta \in \hw_{\bb\cap\kk}(\gG/\lL)$. In turn this is contradictory since $\delta + \alpha_4 = \beta_1$ is shorter than (\ref{eqFakeRelationso(2n)}). We conclude $\delta\in\Delta(\nn)$. Since in (\ref{eqFakeRelationso(2n)}), the indices $(1,4)$ are symmetric to $(2,3)$, we conclude that $\delta':= \eps_{2} +\eps_{3} \in\Delta(\nn)$. Finally, $\alpha_1+ \alpha_3 = \delta + \delta'$ is a shorter relation than (\ref{eqFakeRelationso(2n)}). Contradiction.

So far, we have proved that  $\eps_{4}$ appears with a non-positive coefficient in $\omega$. Therefore, on the right-hand side there is a root, say $\beta_2$, of the form $\beta_2= -\eps_{4}+( \pmf{j_5}\eps_{j_5})$. The minimality of the relation implies $\beta_2=\eps_1-\eps_4$. Therefore we have the desired relation $\alpha_1+\alpha_2=\beta_1+\beta_2$ of type \refeq{eqDcontainsD3'}.

\subsubsection{$\gG\simeq\so(2n+1)$}

\noindent Case 1. The relation has a short root on the left-hand side, say $\alpha_1$. Without loss of generality we may assume $\alpha_1=\eps_{1}$. The $\bb\cap\kk$-singularity of $\alpha_1$ implies that $\kk$ has no short roots.

\noindent Case 1.1. $\eps_{1}$ appears with a positive coefficient in $\omega$ and therefore there is a root on the right-hand side, say $\beta_1$, of the form $\beta_1= \eps_{1}+ (\pmf{j_2}\eps_{j_2})$. Without loss of generality we may assume $\beta_1=\eps_1+\eps_2$.

\noindent Case 1.1.1. $\eps_{2}$ appears with a non-positive coefficient in $\omega$. As $\eps_2$ must be canceled out without contradicting the minimality of the relation, one of the roots on the right-hand side, say $\beta_2$, is of the form $\beta_2 = \eps_{1} -\eps_{2}$. It is now clear that we cannot have a relation shorter than (\ref{eqBcontainsB2'''}).

\noindent Case 1.1.2. $\eps_{2}$ appears with a positive coefficient in $\omega$. The weight $\eps_{2}$ is not a root of $\kk$. Therefore on the left-hand side of the relation there exists a root, say $\alpha_2$, in which $\eps_{2}$ appears with a positive coefficient.

\noindent Case 1.1.2.1. $\alpha_2$ is short, i.e. $\alpha_2=\eps_2$. The relation is \refeq{eqBcontainsB2'}.

\noindent Case 1.1.2.2. $\alpha_2=\eps_2+(\pmf{j_3}\eps_{j_3})$ is long. We claim that $j_3\neq 1$. Indeed otherwise we would have $\alpha_2=\eps_2-\eps_1$, then $\alpha_1+\alpha_2$ would be a root, and by Lemma \ref{leminrelationgeneral} the relation would be $\alpha_1+\alpha_2=\beta_1$. This is impossible. Therefore $j_3\neq 1$, and without loss of generality we can assume $\alpha_2=\eps_2+\eps_3$. Consider $\beta_2:=\eps_3$;  we claim that $\beta_2\in\Delta(\nn)$. Indeed, we immediately see that $\beta_2\notin\Delta(\kk)$, as otherwise $\alpha_1$ would not be $\bb\cap\kk$-singular. Second, assume $\beta_2 \in\Delta(\gG) \backslash \Delta(\nn)$. If $\beta_2$ were $\bb\cap\kk$-singular, we could shorten the relation by removing $\beta_1$ and replacing $\alpha_1+\alpha_2$ by $\beta_2$. Therefore there exists a root $\gamma\in\Delta^+(\kk)$ such that $\beta_2+\gamma$ is a root. The $\bb\cap\kk$-singularity of $\alpha_1$ and $\alpha_2$ implies that $\gamma=\eps_2-\eps_3$. Therefore $\eps_2\in\Delta(\gG)\backslash\Delta(\lL)$. 

Now consider the relation $\eps_1+\eps_2=\beta_1$. If $\eps_2$ were not $\bb\cap\kk$-singular, there would be a positive root $\gamma\in\kk$ such that $\eps_2+\gamma$ is a root but $\eps_2+\eps_3+\gamma$ is not a root, which is impossible. Thus we have a minimal relation of length two of the form $\eps_1+\eps_2=\beta_1$. Hence the initial relation $\eps_1+(\eps_2+\eps_3)+\dots=\beta_1+\dots$ is also of length two. Therefore the unknowns on the right-hand side sum up to $\eps_3$, which together with Lemma \ref{lestreakOfRoots} implies that $\eps_3\in\Delta(\nn)$. Contradiction. Therefore the relation is $\eps_1+(\eps_2 +\eps_3) = (\eps_1+ \eps_2)+\eps_3$ of type \refeq{eqBcontainsB3'}.

\noindent Case 1.2. $\eps_{1}$ appears with a non-positive coefficient in $\omega$. Therefore there is a root, say $\alpha_2$, of the form $\alpha_2=-\eps_{1}+\eps_{2}$. Now \refle{leminrelationgeneral} implies $\alpha_1+\alpha_2\in\Delta(\nn)$ and we get the desired relation \refeq{eqBcontainsB2''}.

\noindent Case 2. Among all minimal relations there is no relation with short roots on the left-hand side. 

\noindent Case 2.1. On the right-hand side there is a short root, say $\beta_1$. Without loss of generality we may assume $\beta_1= \eps_{1}$. As the relation is minimal, $\eps_{1}$  appears with a positive coefficient in $\omega$. Therefore we can assume without loss of generality that $\alpha_1$ is of the form $\alpha_1= \eps_{1}+ \eps_{2}$.

\noindent Case 2.1.1. $\eps_{2}$ appears with a non-positive coefficient in $\omega$. Then there is a root on the left-hand side, say $\alpha_2$, of the form $\alpha_2= -\eps_{2}+(\pmf{j_3}\eps_{j_3})$. If $\alpha_2\neq \eps_2+\eps_1$, we can apply Lemma \ref{leminrelationgeneral} to get a shorter relation than the initial one. Therefore $\alpha_2=-\eps_2+\eps_1$ and the relation is $\alpha_1+\alpha_2=2\beta_1$, of type \refeq{eqBcontainsB2''''}.

\noindent Case 2.1.2. $\eps_{2}$ appears with a positive coefficient in $\omega$. Since $ \eps_{2}$ cannot be a root of $\nn$ (that would imply $\alpha_1\in\Delta(\nn)$), we have a root, say $\beta_2\in\Delta(\nn)$, of the form $\beta_2=\eps_{2}+(\pmf{j_3}\eps_{j_3})$. Since $j_3\neq 1$ we can assume without loss of generality that $\beta_2=\eps_2+\eps_3$. 

\noindent Case 2.1.2.1. $\eps_{3}$ appears with a positive coefficient in $\omega$. Therefore there is a root, say $\alpha_2$, of the from $\alpha_2=\eps_3+(\pmf{j_4}\eps_{j_4})$. We claim that $j_4=1$. Assume the contrary. Since $j_4\neq 2$, we can assume further without loss of generality that $\alpha_2=\eps_{3}+\eps_4$. A short consideration of all possibilities shows that $\alpha_1+ (\eps_3+\eps_4)+ \dots= \eps_1 + (\eps_2 +\eps_3)+\dots$ must be of length at least 3. Consider the root $\eps_3$. If it were in $\Delta(\nn)$ we could shorten the relation by removing $\alpha_1$ and replacing $\beta_1+ \beta_2 $ by $\eps_{ 3}$. If  $\eps_3$ were in $\Delta(\kk)$, we would get $\alpha_1\in\Delta(\nn)$, which is impossible. Therefore $\eps_{3}\in\Delta(\gG)\backslash\Delta(\lL)$. In a similar fashion, we conclude that $\eps_1+\eps_3 \in \Delta (\gG) \backslash \Delta(\lL)$. If at one of the two roots $\eps_{3}$ or $\eps_1+\eps_3$ were $\bb\cap\kk$-singular, we would get a minimal relation of length 2 - either $\alpha_1+\eps_{3}=\beta_1+\beta_2$ or $\alpha_1 +\eps_1 +\eps_{3} =2\beta_1+\beta_2$. Contradiction. Therefore both $\eps_{3}$ and $\eps_1+\eps_3$ are not $\bb\cap\kk$-singular. This shows that there exists $\gamma\in\Delta^+(\kk)$ such that $\gamma+\eps_1+\eps_3$ is a root. Since $\eps_2-\eps_1$ is not a root of $\Delta(\kk)$ and $\alpha_2$ is $\bb\cap\kk$-singular, we obtain that $\gamma= \eps_4 -\eps_3$. Consider $\alpha:=\eps_1+\eps_4 = \gamma+ \eps_1 +\eps_3$. One checks that the $\bb\cap\kk$-singularity of $\alpha_1$ and $\alpha_2$ implies that $\alpha$ is also $\bb\cap\kk$-singular. Therefore we can shorten the relation by replacing $\alpha_1+\alpha_2$ by $\alpha$ and removing $\beta_2$ on the right-hand side. Contradiction.

So far we have established that $j_4=1$. We have immediately a relation of length two, either $\alpha_1+\eps_3+\eps_{1}=2\eps_1+(\eps_2+\eps_3)$ or $\alpha_1+\eps_3-\eps_1=\eps_2+\eps_3$, and so the initial relation is also of length two.  As there can be no two roots on either side that sum up to a root (see Lemma \ref{leminrelationgeneral}), one quickly checks that the only possibility for the minimal relation (up to $\Delta (\gG )$-automorphism) is $(\eps_1+\eps_2)+(\eps_1+\eps_3)=2\eps_1+(\eps_2+\eps_3)$, i.e. type \refeq{eqBcontainsB3''}.

\noindent Case 2.1.2.2. $\eps_{3}$ appears with a non-positive coefficient in $\omega$. Therefore on the right-hand side there is a root, say $\beta_3$, of the form $\beta_3=\eps_2-\eps_3$. We have a relation of length two: $2(\eps_1+\eps_2)=2\eps_1+(\eps_2+\eps_3)+(\eps_2-\eps_3)$ of type \refeq{eqBcontainsB3'''}. In view of the already fixed data, one quickly checks that the only possibility for the initial relation to be of length two is to coincide with this relation.

\noindent Case 2.2. There is no short root on either side of the minimal relation. Therefore we can repeat verbatim the proof for the case $\gG\simeq \so(2n)$ to obtain that we have one of the relations described for this case. 
\subsubsection{$\gG\simeq\sP(2n)$}
\smallskip\noindent Case 1. $\alpha_i+\alpha_j\notin \Delta(\gG)$ for all $i,j$.

\smallskip\noindent Case 1.1 One of the roots $\alpha_i$, say $\alpha_1$, is short. Without loss of generality we may assume $\alpha_1= \eps_{1}+ \eps_{2}$. Since $\alpha_1+\alpha_j\notin\Delta(\gG)$ for all $j$, both $\eps_{1}$ and $\eps_2$ appear with a positive coefficient in $\omega$. Therefore on the right side of the relation there are roots, say $\beta_1$ and $\beta_2$, of the from $\beta_{1} =\eps_{1}+(\pmf{j_3}\eps_{j_3})$ and $\beta_2=\eps_{2} +(\pmf{j_4} \eps_{j_4})$. Consider the vector $\gamma:= \pmf{j_3} \eps_{j_3}+ (\pmf{j_4} \eps_{j_4})$. The minimality of the relation implies that $\gamma$ is non-zero, and therefore that $\gamma$ is a root. If $\gamma\in\Delta(\nn)$ we could shorten the relation by removing $\alpha_1$ on the left-hand side and replacing $\beta_1 + \beta_2$ by $\gamma$. If $\gamma\in\Delta(\kk)$ then $\alpha_1 = (\beta_1- \gamma ) +\beta_2\in\Delta(\nn)$, which is impossible. Therefore $\gamma\in\Delta(\gG)\backslash\Delta(\lL)$. 

As the relation is minimal, $\pmf{j_3}\eps_{j_3}$ and $\pmf{j_4}\eps_{j_4}$  appear with a positive coefficient in $\omega$. Therefore $\pmf{j_3}\eps_{j_3}$ (respectively, $\pmf{j_4}\eps_{j_4}$) appears also in some root, say $\alpha_3$ (respectively, $\alpha_4$) on the left-hand side. If there existed a root $\delta\in\Delta^+(\kk)$ for which $\gamma+\delta$ is a root, $\delta$ would have a negative coefficient in front of one of $\pmf{j_3}\eps_{j_3}$ or $\pmf{j_4}\eps_{j_4}$. This would contradict the $\bb\cap\kk$-singularity of either $\alpha_{3}$ or $\alpha_{4}$. Therefore we have a minimal relation $\alpha_1+\gamma=\beta_1+\beta_2$. Depending on whether $j_3=j_4$ and whether $j_4=2$ our relation is of type \refeq{eqCcontainsA3'}, \refeq{eqCcontainsC3'} or \refeq{eqCcontainsC3''}.

\smallskip\noindent Case 1.2 All roots $\alpha_i$ are long. Without loss of generality we may assume $\alpha_1=2\eps_{1}$. Since $\alpha_1+\alpha_j\notin\Delta(\gG)$ for all $j$, the weight  $\eps_{1}$ appears with a positive coefficient in $\omega$. Therefore there is a root on the right-hand side, say $\beta_1$, of the form $\beta_1 =\eps_{1}+(\pmf{j_2}\eps_{j_2})$. Without loss of generality we may assume $\beta_1=\eps_1+\eps_2$. If $\eps_{ 2}$ appeared with a non-positive coefficient in $\omega$, there would be a cancellation in the right-hand side of the relation. This is impossible. Thus $\eps_{2}$ appears on the left-hand side and we have the desired relation \refeq{eqCcontainsC2''}.

\smallskip\noindent Case 2. For some $\alpha_i$, $\alpha_j$, we have that $\alpha_i+\alpha_j=\gamma$ is a root. By \refle{leminrelationgeneral} $\gamma\in\Delta(\nn)$ and we get one of the relations \refeq{eqCcontainsA2'}, \refeq{eqCcontainsC2'},  (\ref{eqCcontainsC2''}), or (\ref{eqCcontainsC2'''}).
\end{proof}
\begin{corollary}\label{corConeImpliesNotFKFT}
Let $\gG$ be classical simple and suppose that $\lL$ does not satisfy the cone condition. Then the following statements hold.
\begin{itemize}
\item If $\gG\simeq \sL(n), \so(n),$ or $\so(2n)$, there exists an $\lL$-strictly infinite weight $\omega$.
\item If $\gG\simeq \sP(2n)$ there exists an $\lL$-infinite weight $\omega$.
\end{itemize}
\end{corollary}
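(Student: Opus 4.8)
The plan is to combine \refprop{propabundanceHolds} with the explicit classification of minimal relations in \refle{leBigTedious}. By \refprop{propabundanceHolds}, the failure of the cone condition produces a minimal relation \refeq{eqRelation}, $\omega=\sum_i a_i\alpha_i=\sum_j b_j\beta_j$ with $\alpha_i\in\hw_{\bb\cap\kk}(\gG/\lL)$ and $\beta_j\in\Delta(\nn)$, whose weight $\omega$ is two-sided. To upgrade such an $\omega$ to an $\lL$-strictly infinite weight it then suffices to check that the right-hand side is already a strongly orthogonal decomposition of $\omega$ with respect to $\Delta(\nn)$, i.e. that the $\beta_j$ are pairwise strongly orthogonal. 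For $\gG\simeq\so(2n),\so(2n+1),\sP(2n)$ the relation is, up to an automorphism of $\Delta(\gG)$, one of those listed in \refle{leBigTedious}, so this reduces to inspecting finitely many explicit right-hand sides; for $\gG\simeq\sL(n)$ the corresponding length-two classification is the one of \cite{PSZ}, whose right-hand sides are strongly orthogonal, giving the claim in type $A$.

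In the simply-laced case $\gG\simeq\so(2n)$ all roots have the same length, so orthogonal roots are automatically strongly orthogonal; since every tabulated relation has $\langle\beta_i,\beta_j\rangle=0$ for $i\neq j$, each such $\omega$ is $\lL$-strictly infinite. For $\gG\simeq\so(2n+1)$ I would run through the extra entries \refeq{eqBcontainsB2'}--\refeq{eqBcontainsB3'''} individually. Orthogonality no longer suffices here, the only obstruction being a pair of orthogonal short roots $\eps_i,\eps_j$ with $\eps_i+\eps_j$ again a root; but one checks directly that no tabulated right-hand side involves two distinct short roots, so this failure never occurs and $\beta_i\pm\beta_j\notin\Delta(\gG)$ throughout. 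Hence $\omega$ is again $\lL$-strictly infinite, which proves the first bullet.

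The type $C$ case $\gG\simeq\sP(2n)$ contains the one genuine exception, which I expect to be the crux. Every tabulated relation other than \refeq{eqCcontainsC3'} again has a strongly orthogonal right-hand side: the single-root and $2\beta_1$ entries are trivial, and in \refeq{eqCcontainsA3'} the roots generate a subsystem of type $A_3$; such a subsystem of $C_n$ is built from difference roots and so cannot realise a long root $2\eps_i$ as $\beta_1\pm\beta_2$, whence orthogonality of $\beta_1,\beta_2$ upgrades to strong orthogonality (the mixed short/long pair of \refeq{eqCcontainsC3''} is handled the same way). Only \refeq{eqCcontainsC3'}, where up to automorphism $\alpha_1=\eps_1+\eps_2$, $\alpha_2=2\eps_3$, $\beta_1=\eps_1+\eps_3$, $\beta_2=\eps_2+\eps_3$ and $\omega=\eps_1+\eps_2+2\eps_3$, resists: one has $\beta_1-\beta_2=\eps_1-\eps_2\in\Delta(\gG)$, and in fact the only strongly orthogonal decomposition of $\omega$ inside $\Delta(\gG)$ is $\omega=\alpha_1+\alpha_2$, which uses the singular roots rather than roots of $\nn$. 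Thus no $\lL$-strictly infinite weight is available and one must instead produce an $\lL$-infinite one.

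To do this I would descend to a smaller root subalgebra. The $\bb\cap\kk$-singularity of $\alpha_1=\eps_1+\eps_2$ forces $\eps_1-\eps_2\notin\Delta(\kk)$ — otherwise $\alpha_1+\delta$ would be $2\eps_1$ or $2\eps_2$ for a suitable $\delta\in\Delta^{\pm}(\kk)$ — and, using that $\nn$ is a $\kk$-module, the same singularity excludes $\eps_2-\eps_3,\eps_3-\eps_2$ as well. I would then set $\tT=\kk\crplus\nn_\tT$, where $\nn_\tT$ is spanned by the root spaces $\gG^\gamma$ with $f(\gamma)>0$ for a linear functional $f$ vanishing on $\Delta(\kk)$ and on $\eps_1-\eps_2$ and positive on $\alpha_1,\alpha_2,\beta_1,\beta_2$. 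Since $f$ is constant on $\Delta(\kk)$-strings, $\Delta(\nn_\tT)$ is closed and $\kk$-stable, so $\tT$ is a genuine root subalgebra containing $\kk$; moreover $\eps_1-\eps_2\notin\Delta(\tT)$, whence $\beta_1\sperp\beta_2$ in $\tT$, while $\alpha_1,\alpha_2$ remain $\bb\cap\kk$-singular because restricting to a subsystem only removes roots. With $\lL'=\lL\cap\tT=\kk\crplus(\tT\cap\nn)$ one then has that $\omega=\beta_1+\beta_2$ is a strongly orthogonal decomposition over $\Delta(\tT\cap\nn)$ and that $\omega=\alpha_1+\alpha_2$ continues to witness two-sidedness (the commutator condition $[\gG^{\alpha_i},\tT\cap\nn]\subseteq\tT\cap\nn$ is inherited from $\gG$ via $f>0$). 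Thus $\omega$ is $\lL'$-strictly infinite in $\tT$, i.e. $\lL$-infinite, proving the second bullet. The main obstacle is precisely the existence of this functional $f$ (cf. \refle{leNilradicalIsSeparate}): one must verify that the constraints $f|_{\Delta(\kk)}=0$, $f(\eps_1-\eps_2)=0$ and $f>0$ on $\alpha_1,\alpha_2,\beta_1,\beta_2$ are simultaneously satisfiable for every $\kk$ compatible with \refeq{eqCcontainsC3'} — equivalently, that $\kk$-stability does not drag $\eps_1-\eps_2$ back into the nilradical. Everything else is the routine table inspection described above.
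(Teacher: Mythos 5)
Your reduction via Proposition \ref{propabundanceHolds} and the table of Lemma \ref{leBigTedious}, and your treatment of types $A$, $B$, $D$ and of all type-$C$ relations other than (\ref{eqCcontainsC3'}), are essentially the paper's argument. The genuine gap is in the exceptional case (\ref{eqCcontainsC3'}), which is indeed the crux. Your plan is to pass to $\tT=\kk\crplus\nn_\tT$, where $\nn_\tT$ is cut out by a functional $f$ with $f(\eps_1-\eps_2)=0$, so that $\eps_1-\eps_2\notin\Delta(\tT)$ and hence $\beta_1\sperp\beta_2$ ``in $\tT$''. This cannot work: strong orthogonality is not a bookkeeping convention but precisely the condition that makes actual Lie brackets vanish. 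The multiplicity machinery (Lemmas \ref{leMultMain} and \ref{leNotFK}) hinges on the operators $g^{\pm\beta_1},g^{\pm\beta_2}$ commuting with each other, and $[g^{\beta_1},g^{-\beta_2}]$ spans $\gG^{\eps_1-\eps_2}\neq 0$ no matter which subalgebra you declare to be ambient. Moreover, your $\tT$ is not reductive: it contains $\gG^{\beta_i}$ but not $\gG^{-\beta_i}$, so the proofs of Lemmas \ref{leMultMain} and \ref{leNotFK} cannot even be run with $\tT$ as ambient algebra, since they act by the opposite root vectors $\bar u=(g^{-\beta_1})^{b_1}(g^{-\beta_2})^{b_2}$. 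If you repair this by enlarging $\tT$ to a reductive root subalgebra, then $\Delta(\tT)$ is symmetric and additively closed in $\Delta(\gG)$, so $\gG^{\eps_1-\eps_2}=[\gG^{\beta_1},\gG^{-\beta_2}]\subset\tT$ and the offending root reappears. In short, one cannot make two roots strongly orthogonal by shrinking the root system; the failure of $\beta_1\sperp\beta_2$ is absolute, and this is why your worry about the existence of $f$ is beside the point.

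What the paper does instead is change the relation, not the meaning of strong orthogonality. First, $\kk$-stability of $\nn$ forces $2\eps_1,2\eps_2\notin\Delta(\kk)$ (otherwise $\alpha_2\in\Delta(\nn)$). If both $2\eps_1,2\eps_2\in\Delta(\nn)$, then $2\alpha_1=2\eps_1+2\eps_2$ is a new two-sided relation whose right-hand side is strongly orthogonal, so an $\lL$-\emph{strictly} infinite weight exists; note that your inference ``no $\lL$-strictly infinite weight is available'' does not follow from your analysis of $\omega$ alone, since $\Delta(\nn)$ may contain roots not occurring in the minimal relation and a \emph{different} weight may be strictly infinite. Otherwise one of $2\eps_1,2\eps_2$, say $2\eps_1$, lies in $\Delta(\gG)\backslash\Delta(\lL)$, is automatically $\bb\cap\kk$-singular, and one passes to the relation $\omega'=2\eps_1+2\eps_3=2\beta_1$, whose right-hand side is a \emph{single} root of $\nn$ and hence trivially a strongly orthogonal decomposition. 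The reductive subalgebra $\tT$ generated by $\kk$, $\gG^{\pm\beta_1}$, $\gG^{\pm 2\eps_1}$, $\gG^{\pm 2\eps_3}$ is then used only to secure the commutation half of two-sidedness ($[\gG^{2\eps_1},\nn\cap\tT]\subset\nn\cap\tT$, etc., via the analysis of the components $\sS_1,\sS_3$ of $\kk$), never to alter which pairs of roots are strongly orthogonal. That case analysis on the position of $2\eps_1,2\eps_2$, and the switch to a relation with a one-root right-hand side, are the steps missing from your proposal.
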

\begin{proof} 
The statement for $\sL(n)$ follows from \cite[Lemma 5.4]{PSZ}, so let $\gG\simeq$ $\so(2n)$, $\so(2n+1)$ or $\sP(2n)$. By \refprop{propabundanceHolds}, we can always pick a minimal relation corresponding to a two-sided weight. By direct observation of all possibilities for minimal relations given in Lemma \ref{leBigTedious} we see that all such relations have a strongly orthogonal decomposition with respect to $\Delta(\nn)$ except when $\gG\simeq\sP(2n)$ and the two-sided weight is given by \refeq{eqCcontainsC3'}. 

Suppose now $\gG\simeq\sP(2n)$ and relation \refeq{eqCcontainsC3'} holds. According to the proof of Lemma \ref{leBigTedious} we can assume the relation has the form
\[
\underbrace{\eps_{1}+\eps_{2}}_{\alpha_1}+ \underbrace{2\eps_{3}}_{\alpha_2} = \underbrace{ \eps_{1}+ \eps_{3}}_{\beta_{1}}+\underbrace{\eps_{2}+ \eps_{3} }_{\beta_{2}}.
\]
Consider the root $2\eps_{1}$. If $2 \eps_{1}$ belonged to $\Delta(\kk)$, we would have the contradictory $\alpha_2=\beta_1- (2 \eps_{1})+\beta_1\in\Delta(\nn)$. Similarly, we get $2\eps_{2} \notin \Delta (\kk)$. If both $2\eps_{2}, 2\eps_{1} \in\Delta(\nn)$, we get the new relation $2\alpha_1=2\eps_{1}+2\eps_{2}$ which corresponds to a two-sided weight (since the relation corresponds to a two-sided weight) and this new relation gives an $\lL$-strictly infinite weight. If one of $2\eps_{2}, 2\eps_{1}$, say $2\eps_{1}$, belongs to $\Delta(\gG/\lL)$, it is also $\bb\cap\kk$-singular (otherwise $\alpha_1$ would fail to be $\bb\cap\kk$-singular as well). Therefore we have a new relation 
\begin{equation}
\label{eqSoProblem}\omega':= 2\eps_{1}+ 2{\eps_{3}}=2\beta_1. 
\end{equation}  

We claim that $\omega'$ is $\lL$-infinite. Indeed, let $\tT$ be the subalgebra generated by $\kk$, $\gG^{\pma \beta_1}$, $\gG^{\pma 2\eps_{1}}$ and $\gG^{\pma 2\eps_{3}}$. Let $\nn ':=\nn\cap\tT$. Since $\tT$ contains the Cartan subalgebra $\hh$, $\nn '$ is a direct sum of root spaces and is therefore generated as a $\kk$-module by $\gG^{ \beta_1}$. Let $\sS_1$ be the simple component of $\kk$ whose roots are linked to $2\eps_{1}$; in case there is no such simple component, set $\sS_1:=\{0\}$. Define similarly $\sS_3$ using $2\eps_3$. Then $\sS_1\cap\sS_3=\{0\}$ as otherwise $\Delta(\nn)$ would contain $-\beta_2$. In addition, each $\sS_i$ must be of type $A$, (otherwise it would have a root $2\eps_{i}$). It follows that $\omega'$ is two-sided with respect to $\tT$, and therefore $\omega'$ is $\lL$-infinite.
\end{proof}

\begin{lemma}\label{leRank2Solvable}
Let $\gG$ be a simple Lie algebra of rank 2 and $\lL$ be a solvable root subalgebra (i.e. $\kk=\hh$) which does not satisfy the cone condition. Then there exists an $\lL$-strictly infinite weight.
\end{lemma}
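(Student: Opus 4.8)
The plan is to reduce to the four rank-$2$ root systems and to dispatch the classical ones by results already proved, leaving $G_2$ as the only genuinely new case. A simple $\gG$ of rank $2$ is of type $A_2$, $B_2\simeq C_2$ or $G_2$. For $\gG$ classical the claim is essentially \refcor{corConeImpliesNotFKFT}; but since here I want $\lL$-\emph{strictly} (not merely $\lL$-) infinite, I would argue as follows. \refprop{propabundanceHolds} produces a minimal relation \refeq{eqRelation} corresponding to a two-sided weight $\omega$, and by \refle{leBigTedious} this relation is one of those listed whose roots generate a subsystem of rank $\le 2$. Each of these has a strongly orthogonal decomposition with respect to $\Delta(\nn)$: its $\beta$-side is either a single root or a pair of orthogonal (hence strongly orthogonal) roots, as in \refeq{eqBcontainsB2'''}. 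The only minimal relation on the list lacking such a decomposition, \refeq{eqCcontainsC3'}, has roots generating a $C_3$ and therefore cannot occur when $\rk\gG=2$. Thus $\omega$ is $\lL$-strictly infinite in types $A_2$, $B_2$, $C_2$.

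It remains to treat $\gG\simeq G_2$ with $\kk=\hh$. After conjugating by $\WW$ I may assume $\Delta(\nn)\subseteq\Delta^+(\gG)$; since $\kk=\hh$, the $\bb\cap\kk$-singularity condition is vacuous and $\hw_{\bb\cap\kk}(\gG/\lL)=\Delta(\gG)\setminus\Delta(\nn)$. Write the positive roots of $G_2$ as $\sigma,\tau,\sigma+\tau,2\sigma+\tau,3\sigma+\tau,3\sigma+2\tau$ with $\sigma$ short and $\tau$ long. The key structural point I would establish is that, for $\kk=\hh$, the cone condition holds iff $\nn$ is a parabolic nilradical (\cite[Prop.~4]{PS:GenHarishChandra}), i.e. iff $\Delta^+(\gG)\setminus\Delta(\nn)$ is the set of positive roots of a Levi subalgebra; as no two positive roots of $G_2$ are proportional, in rank $2$ the only parabolic nilradicals inside $\Delta^+(\gG)$ are $\emptyset$, $\Delta^+(\gG)$ and the two maximal ones (with complements the short and the long simple root). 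Hence, when $\lL$ fails the cone condition, $\Delta(\gG)\setminus\Delta(\nn)$ contains all negative roots together with at least two non-proportional positive roots, so it is not contained in any closed half-space of $\hh^*_\RR$; consequently $\Cone_\QQ(\Delta(\gG)\setminus\Delta(\nn))=\hh^*_\RR$. In particular the first clause in the definition of a two-sided weight is automatic for every nonzero $\omega\in\Cone(\Delta(\nn))$.

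With the cone-membership clause free, I would then produce a nonzero $\omega$ that (i) is a non-negative integral combination of roots of $\Delta(\gG)\setminus\Delta(\nn)$ normalizing $\nn$ (so that $\omega$ is two-sided) and (ii) has a strongly orthogonal decomposition with respect to $\Delta(\nn)$. The strongly orthogonal pairs of positive roots of $G_2$ are exactly $\{\sigma,3\sigma+2\tau\}$, $\{\sigma+\tau,3\sigma+\tau\}$ and $\{2\sigma+\tau,\tau\}$, and $G_2$ has no triple of pairwise strongly orthogonal positive roots; so a strongly orthogonal decomposition uses either a single root of $\Delta(\nn)$ or one of these three pairs inside $\Delta(\nn)$. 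By \refprop{propabundanceHolds} and \refle{leminrelationgeneral}, a minimal two-sided relation is either $\alpha_1+\alpha_2=\beta_1$, in which case $\omega=\beta_1$ is a single root of $\Delta(\nn)$ and we are done, or has $\alpha_i+\alpha_j$ never a root. In the latter case I would run through the finitely many non-parabolic closed $\Delta(\nn)\subseteq\Delta^+(G_2)$ and exhibit an admissible $\omega$ by hand; for example, for $\Delta(\nn)=\{\sigma,\,3\sigma+2\tau\}$ the weight $\omega=2(3\sigma+\tau)=3\sigma+(3\sigma+2\tau)$ works, because $3\sigma+\tau$ lies outside $\nn$ and normalizes it, while $3\sigma+(3\sigma+2\tau)$ is a strongly orthogonal decomposition.

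The hard part will be step (ii): \refprop{propabundanceHolds} only yields \emph{some} two-sided weight, and in $G_2$ the $\beta$-side of a case-(b) minimal relation need not be strongly orthogonal, since two positive roots whose sum is not a root may still have their difference a root (e.g.\ $\sigma$ and $3\sigma+\tau$). The substance of the $G_2$ argument is therefore to upgrade, for each non-parabolic $\Delta(\nn)$, the given two-sided weight to a strongly-orthogonally decomposable one. Here the identity $\Cone_\QQ(\Delta(\gG)\setminus\Delta(\nn))=\hh^*_\RR$ is what makes this feasible: it removes the cone-intersection constraint and reduces the task to checking, case by case, that a suitable single root or orthogonal pair of $\Delta(\nn)$ can be written through roots outside $\nn$ that normalize $\nn$. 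As $G_2$ has only six positive roots, this is a bounded, if somewhat tedious, finite verification.
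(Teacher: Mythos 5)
Your route is essentially the paper's. For the classical rank-$2$ algebras the paper simply writes ``we leave the proof \dots to the reader'' and records which relations of \refle{leBigTedious} occur; your argument --- \refprop{propabundanceHolds} produces a minimal two-sided relation, \refle{leBigTedious} classifies it, and the unique listed relation without a strongly orthogonal decomposition, \refeq{eqCcontainsC3'}, generates a $C_3$ and hence cannot occur in a rank-$2$ system --- is exactly that reader's exercise, carried out correctly. Two small cautions here: \refle{leBigTedious} is stated only for types $B$, $C$, $D$, so for $A_2$ you must quote \cite[Lemma 5.4]{PSZ} (the $\sL(n)$ clause of \refcor{corConeImpliesNotFKFT}) rather than \refle{leBigTedious}; and ``orthogonal (hence strongly orthogonal)'' is false as a blanket statement in $B_2$ (the short roots $\eps_1,\eps_2$ are orthogonal yet $\eps_1+\eps_2$ is a root) --- it does hold for the long-root pair that actually appears in \refeq{eqBcontainsB2'''}, which is all you need.

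For $G_2$ your structural set-up is sound, and the two nonobvious claims check out: when $\kk=\hh$, failure of the cone condition forces at least two positive roots outside $\Delta(\nn)$, whence $\Cone_{\QQ}(\Delta(\gG)\setminus\Delta(\nn))$ is the whole space and the cone-membership clause of two-sidedness becomes vacuous; and the strongly orthogonal pairs of positive roots are exactly the three you list. Your worked example ($\Delta(\nn)=\{\sigma,3\sigma+2\tau\}$, $\omega=2(3\sigma+\tau)=3\sigma+(3\sigma+2\tau)$) is literally the third row of the paper's $G_2$ table, the relation $2\alpha_1=3\beta_1+\beta_2$. But be clear about what remains: the ``bounded, if somewhat tedious, finite verification'' you defer is not a loose end of the paper's proof --- it \emph{is} the paper's proof, namely the six-row table exhibiting an $\lL$-strictly infinite weight in every possible case of $\Delta(\nn)$. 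There are $29$ closed subsets of $\Delta^+(G_2)$ that are not parabolic nilradicals ($6$ singletons, $10$ pairs, $8$ triples, $5$ quadruples); Weyl symmetry reduces this, but not to one or two cases, and each surviving case needs a weight meeting your conditions (i) and (ii). So as written the proposal is a correct plan, executed for the classical types and for a single $G_2$ configuration; to become a proof it must complete the $G_2$ enumeration (equivalently, reproduce the paper's table).
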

\begin{proof}
We leave the proof of cases $A_2$, $B_2$ and $C_2$  to the reader. We note that in case of type $B_2$ all relations (\ref{eqBcontainsB2'})-(\ref{eqBcontainsB2''''}) appear; similarly, in case of type $C_2$, all relations (\ref{eqCcontainsA2'})-(\ref{eqCcontainsC2'''}) appear.

Let now $\gG\simeq G_2$, and fix the scalar product in $\Delta(\gG)$ so that the length of the long root is $\sqrt{6}$. The following table exhibits one $\lL$-strictly infinite weight in each possible case for $\Delta(\nn)$.

\noindent \begin{tabular}{r|c|c}
\multicolumn{3}{c}{$\gG\simeq G_2$}\\
\multicolumn{1}{c|}{$\omega$} & \begin{tabular}{c}Scalar products. All non-listed \\ scalar products are zero. 
\end{tabular} 
&
\begin{tabular}{p{2cm}} The roots from the relation generate\end{tabular}
\\\hline
$\alpha_1+\alpha_2=3\beta_1$& 
\begin{tabular}{p{4.5cm}}
$\langle\alpha_1,\alpha_2\rangle=\langle\alpha_1,\beta_1\rangle=\langle\alpha_2,\beta_1\rangle=3$, $\langle\alpha_2,\alpha_2\rangle=\langle\alpha_1,\alpha_1\rangle =6$,  $\langle\beta_1,\beta_1\rangle=2$
\end{tabular}
&
$G_2$
\\\hline
$\alpha_1+\alpha_2=\beta_1$ & 
\begin{tabular}{p{4.5cm}}
$\langle\alpha_1,\alpha_2\rangle=1$,  $\langle\alpha_1,\beta_1\rangle=\langle\alpha_2,\beta_1\rangle=3$, $\langle\alpha_2,\alpha_2\rangle = \langle \alpha_1, \alpha_1 \rangle =2$, $\langle\beta_1,\beta_1\rangle=6$
\end{tabular}
&
$G_2$
\\\hline
$2\alpha_1=3\beta_1+\beta_2$ & 
\begin{tabular}{p{4.5cm}}
$\langle\alpha_1,\beta_1\rangle=\langle\alpha_1,\beta_2\rangle=3$, $\langle\beta_1,\beta_1\rangle =2$, $\langle\alpha_1,\alpha_1\rangle=\langle\beta_2,\beta_2\rangle=6$\\
\end{tabular}
&
$G_2$
\\\hline
$\alpha_1+\alpha_2=\beta_1$ & 
\begin{tabular}{p{4.5cm}}
$\langle\alpha_1,\beta_1\rangle=\langle\alpha_2,\beta_1\rangle=3$, $\langle\alpha_1,\alpha_2\rangle =-3$,  $\langle\alpha_1,\alpha_1\rangle=\langle\alpha_2,\alpha_2\rangle=\langle\beta_2,\beta_2\rangle=6$\\
\end{tabular}
&
$A_2$
\\\hline
$\alpha_1+\alpha_2=\beta_1$ &
\begin{tabular}{p{4.5cm}}
$\langle\alpha_1,\beta_1\rangle=\langle\alpha_2,\beta_1\rangle=1$, $\langle\alpha_1,\alpha_2\rangle =-3$, $\langle\alpha_2,\alpha_2\rangle=\langle\beta_1,\beta_1\rangle=2$, $\langle\alpha_1,\alpha_1\rangle=6$\\
\end{tabular}
&
$G_2$
\\\hline
$\alpha_1+\alpha_2=\beta_1$ &
\begin{tabular}{p{4.5cm}}
$\langle\alpha_1,\beta_1\rangle=3$ $\langle\alpha_2,\beta_1\rangle=-1$, $\langle\alpha_1,\alpha_2\rangle =-3$, $\langle\alpha_1,\alpha_1\rangle=\langle\alpha_2,\alpha_2\rangle=\langle\beta_1,\beta_1\rangle=2$\\
\end{tabular}
&
$G_2$
\\\hline
\end{tabular}

\smallskip
\end{proof}

The statement of the following lemma is general, but we will make use of it only for the exceptional Lie algebras.
\begin{lemma}\label{leCentralizerAndNilradical}
Suppose $\nn \cap C(\kk_{ss})$ is not the nilradical of a parabolic subalgebra in $C(\kk_{ss})$ containing $\hh\cap C(\kk_{ss})$. Then the following hold. 
\begin{itemize}
\item[(a)] The cone condition fails. 
\item[(b)] There exists a relation \refeq{eqRelation} of the form given by \refle{leminrelationgeneral}(a) for which $\alpha_1,\alpha_2$ and $\beta_1$ all lie in $\Delta(C(\kk_{ss}))$.  
\item[(c)] There is a relation \refeq{eqRelation} that is $\lL$-infinite.
\end{itemize}
\end{lemma}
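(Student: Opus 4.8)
The plan is to reduce everything to a combinatorial statement about the root system of $\cC:=C(\kk_{ss})$ and then to a rank-two situation. By \refle{leC(kss)}, $\cC$ is reductive with $\Delta(\cC)=\{\alpha\in\Delta(\gG)\mid\alpha\sperp\Delta(\kk)\}$, and I set $N:=\Delta(\nn)\cap\Delta(\cC)=\Delta(\nn\cap\cC)$. Since $\nn\cap\cC$ is a subalgebra, $N$ is closed (if $\beta,\beta'\in N$ and $\beta+\beta'\in\Delta(\gG)$ then $\gG^{\beta+\beta'}\subset\nn\cap\cC$), and $N\cap -N=\emptyset$ because $N\subset\Delta(\nn)$. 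The key combinatorial fact I would establish is: \emph{a closed set $N$ with $N\cap -N=\emptyset$ is the nilradical of a parabolic subalgebra of $\cC$ containing $\hh\cap\cC$ if and only if its complement $\Delta(\cC)\setminus N$ is also closed.} The nontrivial direction is immediate once phrased correctly: if $\Delta(\cC)\setminus N$ is closed, then $P:=\Delta(\cC)\setminus N$ is closed and satisfies $P\cup -P=\Delta(\cC)$ (since $-N\subset P$ forces $N\subset -P$), hence $P$ is a parabolic subset; its associated parabolic has nilradical with root set $P\cap -N=-N$, and therefore $N$ is the root set of the nilradical of the opposite parabolic. Applying the contrapositive to our hypothesis yields roots $\alpha_1,\alpha_2\in\Delta(\cC)\setminus N$ with $\beta_1:=\alpha_1+\alpha_2\in N$.

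This already gives (a) and (b). Indeed, $\alpha_1,\alpha_2\in\Delta(\cC)$ are strongly orthogonal to $\Delta(\kk)$, so $\alpha_i+\delta\notin\Delta(\gG)$ for every $\delta\in\Delta(\bb)\cap\Delta(\kk)$; moreover $\alpha_i\notin\Delta(\kk)$ (a root is never strongly orthogonal to itself) and $\alpha_i\notin\Delta(\nn)$ (otherwise $\alpha_i\in\Delta(\nn)\cap\Delta(\cC)=N$). Hence $\alpha_1,\alpha_2\in\hw_{\bb\cap\kk}(\gG/\lL)$, while $\beta_1\in N\subset\Delta(\nn)$. Thus $\alpha_1+\alpha_2=\beta_1$ is a relation \refeq{eqRelation} of the form prescribed by \refle{leminrelationgeneral}(a) with all three roots in $\Delta(\cC)$, proving (b); and since the nonzero weight $\omega:=\beta_1$ lies in $\Cone_{\ZZ}(\hw_{\bb\cap\kk}(\gG/\lL))\cap\Cone_{\ZZ}(\Delta(\nn))$, the cone condition fails, proving (a).

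For (c) I would pass to a rank-two subsystem. Because $\beta_1=\alpha_1+\alpha_2$ is a root, $\langle\alpha_1,\alpha_2\rangle\neq 0$, so $\alpha_1$ and $\alpha_2$ generate an irreducible rank-two root subsystem $R\subset\Delta(\cC)$, necessarily of type $A_2$, $B_2$, $C_2$ or $G_2$. Let $\tT'$ be the rank-two simple subalgebra of $\cC$ with root system $R$. The relation $\alpha_1+\alpha_2=\beta_1$, with $\alpha_1,\alpha_2\in R\setminus\Delta(\nn)$ and $\beta_1\in R\cap\Delta(\nn)$, exhibits the failure of the cone condition for the solvable root subalgebra of $\tT'$ whose nilradical has root set $R\cap\Delta(\nn)$; by \refle{leRank2Solvable} there is a weight $\omega$ that is strictly infinite for this solvable subalgebra inside $\tT'$. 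I would then take $\tT:=\kk+\tT'$, a root subalgebra containing $\kk$.

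The final and most delicate step is the lifting: to check that $\omega$ remains strictly infinite for $\lL\cap\tT=\kk\crplus(\tT\cap\nn)$ inside $\tT$, so that $\omega$ is $\lL$-infinite in the sense of \refdef{defStronglyPerp}. Here strong orthogonality does all the work. Since every root of $R$ is strongly orthogonal to $\Delta(\kk)$, there are no ``mixed'' roots, so $\Delta(\tT)=\Delta(\kk)\sqcup R$, and consequently $\Delta(\tT\cap\nn)=R\cap\Delta(\nn)=\Delta(\tT'\cap\nn)$. The strongly orthogonal decomposition of $\omega$ with respect to $\Delta(\nn)$ and the decomposition witnessing two-sidedness both use only roots of $R$; these roots are strongly orthogonal to $\Delta(\kk)$, hence $\bb\cap\kk$-singular in $\tT$, and they normalize $\tT\cap\nn$ exactly as they normalize $\tT'\cap\nn$. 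Thus every clause of \refdef{defStronglyPerp} transfers verbatim from $\tT'$ to $\tT$, establishing that $\omega$ is $\lL$-infinite and completing (c). The chief obstacle is not the existence of the relation but this transfer argument, which hinges on \refle{leC(kss)} to guarantee that $\kk$ and $\tT'$ decouple combinatorially.
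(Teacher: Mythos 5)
Your proposal is correct, but it reaches parts (a) and (b) by a genuinely different route than the paper. The paper proves (a) by contradiction: if the cone condition held, a separating element $h\in\hh$ (with $\beta(h)>0$ on $\Delta(\nn)$ and $\alpha(h)\leq 0$ on $\hw_{\bb\cap\kk}(\gG/\lL)$) would cut out a parabolic subalgebra of $C(\kk_{ss})$ whose nilradical is exactly $\nn\cap C(\kk_{ss})$. It then proves (b) by showing the cone condition fails even when restricted to $\Delta(C(\kk_{ss}))$, taking a relation inside $C(\kk_{ss})$, and shrinking it: the $\beta_i$'s may be moved to the left-hand side because $\Delta(C(\kk_{ss}))\cap\hw_{\bb\cap\kk}(\gG/\lL)=\Delta(C(\kk_{ss}))\setminus\Delta(\nn)$, and the left-hand side is collapsed to two terms via Lemma \ref{lestreakOfRoots}. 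You invert this order: you characterize the root sets of nilradicals of parabolic subalgebras of $C(\kk_{ss})$ as the closed asymmetric sets $N$ whose complement is also closed, so the hypothesis directly hands you $\alpha_1,\alpha_2\in\Delta(C(\kk_{ss}))\setminus N$ with $\alpha_1+\alpha_2\in N$; this yields (b) at once and (a) as a corollary, with no separating hyperplane, no relation-reduction, and no appeal to Lemma \ref{lestreakOfRoots}. What your approach buys is brevity and a reusable combinatorial criterion; what it costs is that your ``key combinatorial fact'' is not quite immediate: the step from ``$P$ closed and $P\cup-P=\Delta$'' to ``$P$ is the root set of a parabolic subalgebra'' (equivalently, $P$ contains a positive system, equivalently $P=\{\gamma:\gamma(h)\geq 0\}$ for some $h$, which is the paper's working notion of parabolic) is Bourbaki's theorem on parabolic subsets of root systems and should be cited rather than treated as definitional. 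For (c) your argument coincides with the paper's---pass to the rank-two subalgebra attached to $\alpha_1,\alpha_2,\beta_1$, apply Lemma \ref{leRank2Solvable}, and adjoin $\kk$---except that you spell out the transfer step (strong orthogonality to $\Delta(\kk)$ gives $\Delta(\tT)=\Delta(\kk)\sqcup R$, so the singular weights, the two-sidedness conditions, and the strongly orthogonal decomposition are literally the same in $\tT'$ and in $\tT$), which the paper leaves implicit; making this explicit is a genuine improvement in completeness. One micro-error to fix: ``$\beta_1=\alpha_1+\alpha_2$ a root implies $\langle\alpha_1,\alpha_2\rangle\neq 0$'' is false (take $\alpha_1=\eps_1$, $\alpha_2=\eps_2$ in $B_2$, or $\alpha_1=\eps_1-\eps_2$, $\alpha_2=\eps_1+\eps_2$ in $C_2$); irreducibility of the rank-two subsystem $R$ instead follows from the fact that $\alpha_1+\alpha_2\in\Delta(\gG)$ rules out type $A_1+A_1$, so your conclusion survives with the corrected justification.
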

\begin{proof}
(a) Suppose on the contrary the cone condition holds. Then there exists $h\in\hh$ such that $h(\beta)>0$ for all $\beta\in\Delta(\nn)$ and $h(\alpha)\leq 0$ for all $\alpha\in\hw_{\bb\cap\kk}(\gG/\lL) \supset \Delta(C(\kk_{ss})$. The element $h$ defines a parabolic subalgebra $(\hh\cap C(\kk_{ss}))+\bigoplus_{\substack{ \gamma\in\Delta(C(\kk_{ss})) \\ \gamma(h)\geq 0}}\gG^\gamma $ of  $C(\kk_{ss})$ whose nilradical is $\nn\cap C(\kk_{ss})$, contradiction. 

(b) Using similar arguments to (a), we see that the cone condition fails when restricted to $\Delta(C(\kk_{ss}))$, i.e. the cones $\Cone_{\ZZ} (\Delta(C(\kk_{ss}))\cap$ $\hw_{\bb\cap\kk}(\gG/\lL))$ and  $\Cone_{\ZZ}$ $(\Delta(\nn) \cap$ $\Delta (C(\kk_{ss})))$ have non-zero intersection. 

Take now a relation \refeq{eqRelation}. Note that $\Delta(C(\kk_{ss}))$ $\cap \hw_{\bb\cap\kk}$ $(\gG/\lL)$ $= \Delta( C(\kk_{ss}))$ $\backslash\Delta(\nn)$. Therefore when we add $-\beta_i$ to both sides of \refeq{eqRelation} we still get a relation of the type \refeq{eqRelation} or zero; thus we can obtain a relation \refeq{eqRelation} with only one term $\beta_1$ on the right-hand side. If we have more than two terms on the left-hand side, by \refle{lestreakOfRoots} we get that the sum of two $\alpha_i$'s must be a root. If that root is in $\Delta(\nn)$, we have found a relation of type given by \refle{leminrelationgeneral}(a); else we can substitute the two roots with their sum and thus reduce the number of terms on the left-hand side. In this fashion, we can reduce the number of summands on the left-hand side to two, which gives the desired relation.

(c) Let $\alpha_1,\alpha_2, \beta_1$ be the roots obtained in (b) and let $\tT$ be the subalgebra generated by $\kk$, $\gG^{\pma\alpha_1}$, $\gG^{\pma\alpha_2}$ and $\gG^{\pma\beta_1}$. Lemma \ref{leRank2Solvable} implies that there exists an $\lL\cap\tT$-strictly infinite weight in $\tT$, which is the desired $\lL$-infinite weight.
\end{proof}

\subsection{Exceptional Lie algebras $G_2$, $F_4$, $E_6$ and $E_7$}
\subsubsection{Exceptional Lie algebra $G_2$}
If $\kk_{ss}=\{0\}$ the existence of an $\lL$-infinite weight is guaranteed by \refle{leCentralizerAndNilradical}. If $\kk_{ss} \neq \{0\}$ it is a straightforward check that, up to a $\gG$-automorphism, the only root subalgebra $\lL=\kk\crplus\nn$ for which the cone condition fails is given by $\Delta(\kk)=\{\pm\gamma_1\}$, $\Delta(\nn)=\{\gamma_1+3\gamma_2, 2\gamma_1+3\gamma_2\}$, where $\gamma_1, \gamma_2$ are positive simple roots of $G_2$ such that $\gamma_1$ is long. For this subalgebra, $(\gamma_1+2\gamma_2)+(\gamma_1+\gamma_2)=2\gamma_1+3\gamma_2$ is the desired $\lL$-(strictly) infinite weight.

Note that $G_2$ is the only simple Lie algebra of rank 2 which admits a non-solvable and non-reductive root Fernando-Kac subalgebra of infinite type (cf. \cite[Example 2]{PS:GenHarishChandra}).

\subsubsection{Exceptional Lie algebras $F_4$, $E_6$, $E_7$}\label{secExceptional}
For a fixed exceptional Lie algebra $\gG$, \refle{leCentralizerAndNilradical} allows us to assume that $\nn\cap C(\kk_{ss})$ is the nilradical of a parabolic subalgebra of $C(\kk_{ss})$ containing $C(\kk_{ss})\cap \hh$. 
The following two lemmas can be proved using a computer; the algorithm we used is described in the next section.

\begin{lemma}\label{leE6}~
Let $\gG\simeq E_6$ or $E_7$ with a root subalgebra $\lL=\kk\crplus\nn$ for which the cone condition fails. Suppose in addition that $\nn \cap C(\kk_{ss})$ is the nilradical of some parabolic subalgebra in $C(\kk_{ss})$ containing $\hh\cap C(\kk_{ss})$. Then there exists an $\lL$-strictly infinite relation \refeq{eqRelation} of one of the types listed for $\so(2n)$ in \refle{leBigTedious} or of the type

\noindent\begin{tabular}{rcl| c |c}
\multicolumn{3}{c|}{$\omega$} & \begin{tabular}{p{4.5cm}} Scalar products.\\ All non-listed scalar products are zero.\end{tabular}&  \begin{tabular}{p{1,3cm}}The roots from the relation generate\end{tabular}
\\\hline
\multicolumn{5}{c}{$\gG\simeq E_6, E_7$}\\\hline
$\alpha_1+\alpha_2+\alpha_3$&$=$&$\beta_1+\beta_1+\beta_3$ &
\begin{tabular}{c}
$\langle\alpha_1,\beta_2\rangle=\langle\alpha_1,\beta_3\rangle=$ \\
$\langle\alpha_2,\beta_1\rangle=\langle\alpha_2,\beta_3\rangle=$ \\
$\langle\alpha_3,\beta_1\rangle=\langle\alpha_3,\beta_2\rangle=1$ \\
\end{tabular} & $A_5$.\\\hline
\end{tabular}
\medskip

\noindent When $\gG\simeq E_6$, the above relation occurs only when $\Delta(\kk)\simeq A_1+A_1+A_1$.
\end{lemma}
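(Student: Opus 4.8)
The plan is to reduce the statement to a finite combinatorial search and to settle it by the enumeration carried out by the program described in the next section. The starting point is that, by Proposition \ref{propabundanceHolds}, the failure of the cone condition is equivalent to the existence of a minimal relation \refeq{eqRelation} corresponding to a two-sided weight; so it suffices to classify, up to $\gG$-automorphism, all minimal relations arising for the admissible pairs $(\kk,\nn)$ and to check that each is of one of the listed types. Every relation in the list has the feature that its right-hand side $\beta_1+\dots+\beta_k$ consists of pairwise strongly orthogonal roots (no scalar product between distinct $\beta_i$ is listed, and one checks directly that no $\beta_i\pm\beta_j$ is a root of the indicated subsystem); hence any two-sided relation of a listed type automatically carries a strongly orthogonal decomposition of $\omega$ with respect to $\Delta(\nn)$, and is therefore $\lL$-strictly infinite by Definition \ref{defStronglyPerp}. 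Thus the entire content of the lemma is the classification of the minimal relations.

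To organize the search I would first fix $\kk$ among the finitely many reductive root subalgebras of $\gG$, using Dynkin's classification \cite{Dynkin:Semisimple} ($19$ entries for $E_6$ and $42$ for $E_7$). For each $\kk$, Lemma \ref{leC(kss)} gives $C(\kk_{ss})$ explicitly. The hypothesis that $\nn\cap C(\kk_{ss})$ is the nilradical of a parabolic subalgebra of $C(\kk_{ss})$ containing $\hh\cap C(\kk_{ss})$ restricts the ``central'' part of $\nn$ to the finitely many parabolic nilradicals of $C(\kk_{ss})$ (indexed by subsets of a simple system of $\Delta(C(\kk_{ss}))$); the remaining roots of $\nn$, namely those linked to $\kk_{ss}$, must be adjoined so that $\Delta(\nn)=\Delta(\lL)\setminus\Delta(\kk)$ is $\kk$-stable and closed under addition within $\Delta(\lL)$, which again leaves only finitely many possibilities. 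To keep the computation feasible I would then quotient by the automorphisms of $\Delta(\gG)$ preserving $\Delta(\bb\cap\kk)$, retaining one representative in each conjugacy class of $\lL$, exactly as indicated in the text.

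For each surviving $\lL$ the algorithm computes $\hw_{\bb\cap\kk}(\gG/\lL)$, tests whether $\Cone_\ZZ(\Delta(\nn))$ and $\Cone_\ZZ(\hw_{\bb\cap\kk}(\gG/\lL))$ meet only at the origin, and, whenever the cone condition fails, produces a minimal two-sided relation and identifies its type. The theoretical tools of Section \ref{secBigTedious} structure but do not eliminate this search: Lemma \ref{leminrelationgeneral} splits each minimal relation into the length-two case $\alpha_1+\alpha_2=\beta_1$ and the case in which no $\alpha_i+\alpha_j$ is a root, while Lemma \ref{lestreakOfRoots} constrains how the $\beta_i$ may interact with the singular $\alpha_i$. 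The auxiliary claim that in $E_6$ the $A_5$ relation occurs only for $\Delta(\kk)\simeq A_1+A_1+A_1$ is then read off directly from the tabulated output.

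The main obstacle, and the reason a short ad hoc argument does not suffice, is that minimal relations in $E_6$ and $E_7$ are no longer confined to the small subsystems seen in the classical types: the new relation has length three and generates a rank-five subsystem $A_5$, so there is no bound on the rank of the relevant subsystem analogous to the ``length $2$'' phenomenon of Lemma \ref{leBigTedious}. This is precisely what forces the exhaustive enumeration, and the automorphism reduction of the preceding paragraph is what makes it terminate in practice.
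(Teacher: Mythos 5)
Your proposal is correct and takes essentially the same route as the paper: the paper proves this lemma entirely by the computer enumeration described in its algorithm section (enumerate $\kk$ from Dynkin's classification, enumerate admissible $\nn$ up to the automorphisms preserving $\Delta(\bb\cap\kk)$, intersect the two cones, then search sums of elements of $S$ for strongly orthogonal decompositions with respect to $\Delta(\nn)$), which is precisely the search you outline. Your explicit reduction — that a two-sided relation of a listed type is automatically $\lL$-strictly infinite because in simply-laced root systems orthogonal roots are strongly orthogonal — is the same observation the paper relies on implicitly when it reads the lemma off the tabulated output.
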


For the next lemma, we need to define a special root subalgebra of $\gG\simeq F_4$. Fix the scalar product of the root system of $F_4$ so that the long roots have length $2$. Let $\kk$ be defined by the requirement that $\Delta(\kk_{ss})$ be of type $A_1+A_1$ where both $A_1$ roots are long (all such $\kk$ are conjugate, \cite{Dynkin:Semisimple}). Then $C(\kk_{ss})_{ss}$ is of type $C_2\simeq B_2$. Let $\gamma_1$ and $\gamma_2$ be the positive long roots of $\kk$ and $\beta_1$ and $\beta_2$ be the positive long roots of $C(\kk_{ss})$. Let $\beta_0$ be the unique short root of $\Delta(C(\kk_{ss}))$ which has positive scalar products with both $\beta_1$ and $\beta_2$. The roots $\beta_1,\beta_2,\gamma_1$ and $\gamma_2$ are linearly independent. Let $\beta_3$ be given by the requirement $\langle\beta_1,\beta_3 \rangle=0$, $\langle \beta_2, \beta_3 \rangle=2$, $\langle \gamma_1, \beta_3\rangle=0$, $\langle\gamma_2,\beta_3 \rangle=2$ and let $\beta_4$ be given by the requirement $\langle\beta_1,\beta_4 \rangle=0$, $\langle\beta_2,\beta_4\rangle=2$, $\langle \gamma_1, \beta_4 \rangle=2$, $\langle\gamma_2,\beta_4 \rangle=0$. Then $\gG^{\beta_3}$ and $\gG^{\beta_4}$ generate two $\kk$-submodules of $\gG$, say $\nn'$ and $\nn''$, each of dimension 2. Define $\nn$ as the linear span of $\nn',\nn''$, $\gG^{\beta_0}$, $\gG^{\beta_1}$ and $\gG^{\beta_2}$. Then $\nn$ is a nilpotent subalgebra of $\gG$, and is a $\kk$-module. Further, $\dim\nn = 2+2+(1+1+1)=7$ and $C(\kk_{ss})\cap\nn$ is the nilradical of a parabolic subalgebra of $C(\kk_{ss})$. Set $\lL_1:=\kk\crplus\nn$.  

\begin{lemma}\label{leF4} Let $\gG\simeq F_4$. Suppose in addition that $\nn \cap C(\kk_{ss})$ is the nilradical of some parabolic subalgebra of $C(\kk_{ss})$ containing $\hh\cap C(\kk_{ss})$.
\begin{itemize}
\item[(a)] If $\lL$ is not conjugate to $\lL_1$, there exists an $\lL$-strictly infinite relation \refeq{eqRelation} from the list of Lemma \ref{leBigTedious}. Moreover, all relations from Lemma \ref{leBigTedious} except (\ref{eqCcontainsC3'}) do appear.
\item[(b)] If $\lL$ is conjugate to $\lL_1$, there exists an $\lL$-(non-strictly) infinite relation \refeq{eqRelation}. This relation comes from an $\lL':=\lL\cap\tT$-strictly infinite relation in $\tT$, where $\tT$ is one of the two semisimple subalgebras of type $C_3+A_1$ generated by  $\kk$, $C(\kk_{ss})$ and the conjugate of either $\nn'\cup{\nn'}^-$ or $\nn''\cup{\nn''}^-$. The $\lL'$-strictly infinite relation in $\tT$ can be chosen to be isomorphic to relation (\ref{eqCcontainsA2'}).
 
\end{itemize}
\end{lemma}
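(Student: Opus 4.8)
The statement splits into a finite computer enumeration (part (a)) and a single explicit hand argument for the subalgebra $\lL_1$ (part (b)); here is how I would organize each. First I would invoke \refle{leCentralizerAndNilradical}: whenever $\nn\cap C(\kk_{ss})$ is \emph{not} the nilradical of a parabolic subalgebra of $C(\kk_{ss})$ containing $\hh\cap C(\kk_{ss})$, an $\lL$-infinite relation already exists, so I may assume throughout that $\nn\cap C(\kk_{ss})$ \emph{is} such a nilradical. Under this hypothesis the subalgebras $\lL=\kk\crplus\nn$ failing the cone condition form a finite list, which the vector-partition program enumerates as follows: using Dynkin's classification \cite{Dynkin:Semisimple} of reductive root subalgebras ($22$ candidates for $\kk$ in $F_4$) I would fix $\kk$; then, ranging over the automorphisms of $\Delta(\gG)$ preserving $\Delta(\bb\cap\kk)$, generate one representative of each conjugacy class of nilradicals $\nn$; test the cone condition of \refdef{defCone} for each resulting $\lL$; and for each $\lL$ that fails it, search for a minimal relation \refeq{eqRelation} admitting a strongly orthogonal decomposition with respect to $\Delta(\nn)$, i.e.\ an $\lL$-strictly infinite weight in the sense of \refdef{defStronglyPerp}.

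For part (a) the computation returns, for every $\lL$ not conjugate to $\lL_1$, such a strongly orthogonal minimal relation; inspecting the output confirms that each of these relations is of one of the types catalogued for $\so(2n)$ in \refle{leBigTedious}, and that every type from that lemma except \refeq{eqCcontainsC3'} is realised by some $\lL$ (consistent with $F_4$ containing root subsystems of types $A_2$, $A_3$, $B_2$, $D_4$ and $C_3$). By \refdef{defStronglyPerp} each such relation produces an $\lL$-strictly infinite weight, which is exactly the assertion of (a).

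Part (b) is the single residual case $\lL\simeq\lL_1$, for which the search yields only a relation of type \refeq{eqCcontainsC3'}; by construction this relation has no strongly orthogonal decomposition, so it cannot be used directly. Here I would argue exactly as in the exceptional type $C$ case inside the proof of \refcor{corConeImpliesNotFKFT}. Let $\tT$ be the subalgebra generated by $\kk$, $C(\kk_{ss})$ and a conjugate of $\nn'\cup{\nn'}^-$; the root data defining $\lL_1$ show that $\tT$ is semisimple of type $C_3+A_1$, with $A_1=\{\pm\gamma_1\}$ (the root orthogonal to $\nn'$) and the $C_3$ factor spanned by $\gamma_2,\beta_0,\beta_1,\beta_2$ together with the $\kk$-string through $\beta_3$. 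Setting $\lL':=\lL\cap\tT=\kk\crplus(\nn\cap\tT)$, I would check that $\lL'$ fails the cone condition inside $\tT$ and that, within the $C_3$ factor, its failure is witnessed by a relation of type \refeq{eqCcontainsA2'}, namely $\alpha_1+\alpha_2=\beta_1$ with $\beta_1\in\Delta(\nn)$. The right-hand side being a single root is trivially a strongly orthogonal decomposition, so $\omega:=\beta_1$ is $\lL'$-strictly infinite in $\tT$; by the final clause of \refdef{defStronglyPerp} this makes $\omega$ an $\lL$-infinite weight, proving (b).

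The hard part is twofold. For (a) it is the exhaustiveness of the enumeration: one must trust that the quotient by $\Delta(\bb\cap\kk)$-preserving automorphisms lists every conjugacy class of $\lL$ failing the cone condition, so that $\lL_1$ is genuinely the unique exception — this is precisely the content carried by the computer and the step least amenable to a by-hand check. For (b) the work is in confirming that $\tT$ really is of type $C_3+A_1$ and that the conjugate of $\nn'\cup{\nn'}^-$ supplies exactly the root spaces needed for the type \refeq{eqCcontainsA2'} relation to lie in $\lL'=\lL\cap\tT$; this is the $F_4$ analogue of the root bookkeeping already carried out for $\sP(2n)$ in \refcor{corConeImpliesNotFKFT}.
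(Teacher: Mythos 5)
Your proposal follows essentially the same route as the paper: Lemma \ref{leF4} is proved by the computer enumeration described in the paper (fix $\kk$ from Dynkin's list of $22$ reductive root subalgebras, quotient by the automorphisms of $\Delta(\gG)$ preserving $\Delta(\bb\cap\kk)$, keep only those $\nn$ with $\nn\cap C(\kk_{ss})$ a parabolic nilradical, test the cone condition, then search for strongly orthogonal decompositions), and the single exceptional class $\lL_1$ is handled by hand via the $C_3+A_1$ subalgebra $\tT$ and a relation of type \refeq{eqCcontainsA2'}, exactly as in the paper and in parallel with the $\sP(2n)$ case of Corollary \ref{corConeImpliesNotFKFT}. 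Your part (b) correctly identifies the factors of $\tT$ ($A_1=\{\pm\gamma_1\}$, the $C_3$ spanned by $\gamma_2$, $C(\kk_{ss})_{ss}$ and the $\kk$-string through $\beta_3$) and correctly uses the last clause of Definition \ref{defStronglyPerp} to pass from $\lL'$-strict infiniteness in $\tT$ to $\lL$-infiniteness.

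One slip in part (a): you assert that every relation returned by the computation is ``of one of the types catalogued for $\so(2n)$'' in Lemma \ref{leBigTedious}. That is the statement of Lemma \ref{leE6} for the simply-laced algebras $E_6$ and $E_7$; it is false for $F_4$, whose root system has short roots, so relations of the $\so(2n+1)$- and $\sP(2n)$-types (e.g.\ \refeq{eqBcontainsB2'}, \refeq{eqCcontainsA2'}, \refeq{eqCcontainsC2'}) necessarily occur --- indeed the lemma's own ``Moreover'' clause, which you then state correctly, says that \emph{all} relations of Lemma \ref{leBigTedious} except \refeq{eqCcontainsC3'} appear. The verification step should therefore check membership in the full list of Lemma \ref{leBigTedious}, not the $\so(2n)$ sublist; with that correction your argument matches the paper's.
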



Combining \refle{leCentralizerAndNilradical} with Lemmas \ref{leF4} and \ref{leE6} we get the following.

\begin{corollary}\label{corExceptional}
The failure of the cone condition for a root subalgebra $\lL$ of the exceptional Lie algebras of type $F_4, E_6, E_7$ implies the existence of an $\lL$-infinite weight.
\end{corollary}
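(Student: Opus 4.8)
The plan is to split into two cases according to whether the subalgebra $\nn\cap C(\kk_{ss})$ is the nilradical of a parabolic subalgebra of $C(\kk_{ss})$ containing $\hh\cap C(\kk_{ss})$. This dichotomy is exhaustive, and each branch is settled by one of the lemmas already established; the proof therefore amounts to assembling those results and checking that the conclusion ``$\lL$-strictly infinite'' is a special case of ``$\lL$-infinite''. Indeed, taking $\tT=\gG$ (so that $\lL'=\lL\cap\tT=\lL$) in \refdef{defStronglyPerp} shows at once that every $\lL$-strictly infinite weight is in particular $\lL$-infinite, a remark I will use repeatedly.

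First I would treat the case in which $\nn\cap C(\kk_{ss})$ is \emph{not} the nilradical of such a parabolic. Here \refle{leCentralizerAndNilradical} applies directly: part (a) confirms that the cone condition indeed fails (consistent with our standing hypothesis), while part (c) produces a relation \refeq{eqRelation} that is $\lL$-infinite. This branch requires no information about the specific type of $\gG$, since \refle{leCentralizerAndNilradical} is stated for an arbitrary reductive $\kk$.

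It remains to treat the case in which $\nn\cap C(\kk_{ss})$ \emph{is} the nilradical of a parabolic subalgebra of $C(\kk_{ss})$ containing $\hh\cap C(\kk_{ss})$. Carrying the hypothesis that the cone condition fails, I would invoke the type-specific lemmas. For $\gG\simeq E_6$ or $E_7$, \refle{leE6} yields an $\lL$-strictly infinite relation \refeq{eqRelation}, which is $\lL$-infinite by the remark above. For $\gG\simeq F_4$, \refle{leF4} splits once more: in the generic subcase (a), where $\lL$ is not conjugate to $\lL_1$, one again obtains an $\lL$-strictly infinite relation drawn from the list of \refle{leBigTedious}, hence an $\lL$-infinite weight; in the single exceptional subcase (b), where $\lL$ is conjugate to $\lL_1$, \refle{leF4} furnishes an $\lL$-infinite weight arising from an $\lL\cap\tT$-strictly infinite relation inside a subalgebra $\tT$ of type $C_3+A_1$. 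In every branch an $\lL$-infinite weight has been produced, which completes the proof.

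At the level of this corollary the only genuine points to verify are that the stated dichotomy is exhaustive and that strict infiniteness entails infiniteness; the substantive difficulty is wholly contained in the supporting lemmas. I expect the hardest ingredient to be \refle{leF4}(b): it is precisely for the subalgebra $\lL_1$ that no strongly orthogonal minimal relation is available outright, so one cannot conclude strict infiniteness and must instead exhibit the auxiliary $C_3+A_1$ subalgebra $\tT$ to recover an $\lL$-infinite (but not strictly infinite) weight. This mirrors the exceptional behaviour of relation \refeq{eqCcontainsC3'} already encountered for $\sP(2n)$ in \refcor{corConeImpliesNotFKFT}, and it is the one place where the otherwise uniform ``strictly infinite relation from the list'' argument genuinely breaks down.
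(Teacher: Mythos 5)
Your proof is correct and takes essentially the same route as the paper, whose entire argument for this corollary is to combine Lemma \ref{leCentralizerAndNilradical} (covering the case where $\nn\cap C(\kk_{ss})$ is not the nilradical of a parabolic subalgebra of $C(\kk_{ss})$ containing $\hh\cap C(\kk_{ss})$) with Lemmas \ref{leE6} and \ref{leF4} (covering the remaining case, including the exceptional subalgebra $\lL_1$ in $F_4$). Your explicit remark that an $\lL$-strictly infinite weight is $\lL$-infinite, by taking $\tT=\gG$ in Definition \ref{defStronglyPerp}, is exactly the implicit reduction the paper relies on.
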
 

\subsubsection{Computer computations for the exceptional Lie algebras}\label{secComputerProof}
This section sketches the algorithm we used to carry out the computer based proofs in \refsec{secExceptional}.

The algorithm has as input the Cartan matrix of a semisimple Lie algebra $\gG$. For a given value of $\lL$, let $S$ be the set of weights of $\hw_{\bb\cap\kk}(\gG/\lL)$ for which $[\gG^{\alpha}, \nn]\subset\nn$ (see Definition \ref{defStronglyPerp}). The output is the following.
\begin{enumerate}
\item[(i)] A list of all possible (up to an automorphism of $\Delta(\gG)$) sets of roots of subalgebras $\lL=\kk\crplus\nn$, for which $C(\kk_{ss})\cap\nn$ is the nilradical of a parabolic subalgebra of $C(\kk_{ss})$ containing $\hh\cap C(\kk_{ss})$.
\item[(ii)] A sublist of the list in (i) for which the corresponding subalgebras do not satisfy the cone condition but there exists no $\lL$-strictly infinite weight of length less than or equal to $\max\{\# S, \rk\gG\}$.

\textbf{Remark.} This sublist turns out to be empty for $\gG\simeq E_6, E_7$ and contains one entry for $\gG\simeq F_4$. This entry corresponds to subalgebras conjugate to $\lL_1$, where $\lL_1$ is the subalgebra defined in section \ref{secExceptional}.
\item[(iii)] A list complementary within (i) to the sublist (ii).

\arxivVersion{\textbf{Remark.} The actual list of $\lL$-strictly infinite weights we produced is more detailed; it includes information about the simple direct summands of $\kk$ whose roots are linked to the roots participating in the relation. 
}
\end{enumerate}
The algorithm follows. \arxivVersion{The actual tables printed out for $\gG\simeq F_4$, $E_6$ and $E_7$ are included in the appendix.}
\begin{itemize}
\item Enumerate (up to a $\gG$-automorphism) all reductive root subalgebras $\kk$ containing $\hh$, according to the classification in \cite{Dynkin:Semisimple}.
\item Fix $\kk$. Compute the $\kk$-module decomposition of $\gG$. Then $\nn$ is given by a set of $\kk$-submodules of $\gG$. 
\item Compute $\Delta(C(\kk_{ss}))$ (Lemma \ref{leC(kss)}). Compute the group $W'$ of all root system automorphisms of $\Delta(\gG)$ which preserve $\Delta(\bb\cap\kk)$. Note that $W'=W'''\rtimes W''$ is the semidirect product of the Weyl group $W'''$ of $C(\kk_{ss})$ with the group $W''$ of graph automorphisms of $(\Delta(\kk_{ss})\oplus \Delta(C(\kk_{ss}))\cap\Delta(\bb)$ which preserve $\Delta(\kk_{ss})$ and $\Delta(C(\kk_{ss}))$ and extend to automorphisms of $\Delta(\gG)$. \arxivVersion{The tables in Appendix \ref{tableGroups} list the cardinalities of the groups $W''$.}
\item Introduce a total order $\prec$ on the set of all sets of $\kk$-submodules of $\gG$ in an arbitrary fashion.
\item Enumerate all relevant possibilities for $\nn$:
\begin{itemize} 
\item Discard all sets of submodules $\mathcal P$ for which there exists $w\in W'$ with $w\Delta(\mathcal P)\prec \Delta(\mathcal P)$ (act element-wise). 
\item Discard all sets of submodules $\mathcal P$ whose union, intersected with $C(\kk_{ss})$, does not correspond to a nilradical of a parabolic subalgebra of $C(\kk_{ss})$.
\end{itemize}
\item Fix $\nn$.
\item Intersect the two cones $\Cone_\QQ(\Delta(\nn))$ and $\Cone_\QQ(\hw_{\bb\cap\kk}(\gG/\lL))$ (by using the simplex algorithm over $\QQ$ to solve the corresponding linear system of inequalities). If the cones intersect, proceed with the remaining steps.
\item Generate the set of weights $S$.
\item Generate all possible couples $\alpha_1,\alpha_2\in S$ ($\alpha_1=\alpha_2$ is allowed) and compute whether $\alpha_1+\alpha_2$ has a strongly orthogonal decomposition with respect to $\Delta(\nn)$. If no such strongly orthogonal decomposition exists, proceed with all triples, quadruples, \dots, up to  $\max\{\# S, \rk\gG\}$-tuples, until reaching a weight with a strongly orthogonal decomposition with respect to $\Delta(\nn)$. If such a strongly orthogonal decomposition is found, add the found $\lL$-infinite weight and $\Delta(\lL)$ to the list (iii), else add it to the list (ii).

\end{itemize}

\section{Proof of Theorem \ref{thMainResult}}\label{secTheProof}\label{secRootSA}
Before we prove Theorem \ref{thMainResult}, we need to prove the following.
\begin{prop}\label{lePara}
Suppose that $\lL$ satisfies the cone condition. 
Then $C(\kk_{ss})\cap N(C(\kk_{ss})\cap\nn)$ is a parabolic subalgebra of $C(\kk_{ss})$. Equivalently, in view of \refle{leC(kss)}, there exists $h\in\hh$ such that 
\begin{equation}\label{eqPara} 
C(\kk_{ss})\cap N(C(\kk_{ss})\cap\nn)=\mathfrak{q}_h:= \hh_1\oplus\bigoplus_{\substack{\alpha(h)\geq 0\\ \alpha\sperp\Delta(\kk_{ss})}} \gG^{\alpha},
\end{equation} 
where $\hh_1=\{h\in\hh~|~\gamma(h)=0$ for all $ \gamma\in\Delta(\kk)\}$. In addition, $C(\kk_{ss})\cap N(\nn) = C(\kk_{ss})\cap N(C(\kk_{ss}) \cap \nn )$.
\end{prop}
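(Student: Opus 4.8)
The plan is to extract a single separating element $t\in\hh$ from the cone condition and use it to pin down both normalizers at once. Since $\Cone_\QQ(\Delta(\nn))$ is pointed (\refle{leNilradicalIsSeparate}), a standard convexity/transposition argument—the one already used in the proof of \refle{leCentralizerAndNilradical}(a)—shows the cone condition is equivalent to the existence of $t\in\hh$ with $\beta(t)>0$ for all $\beta\in\Delta(\nn)$ and $\alpha(t)\le 0$ for all $\alpha\in\hw_{\bb\cap\kk}(\gG/\lL)$. Write $\nn_\cC:=\nn\cap C(\kk_{ss})$ and decompose $\hh=\hh_1\oplus(\hh\cap\kk_{ss})$; let $h\in\hh_1$ be the $\hh_1$-component of $t$. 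Every $\gamma\in\Delta(C(\kk_{ss}))$ satisfies $\gamma\sperp\Delta(\kk)$ and hence vanishes on $\hh\cap\kk_{ss}$, so $\gamma(t)=\gamma(h)$. Using $\Delta(C(\kk_{ss}))\cap\hw_{\bb\cap\kk}(\gG/\lL)=\Delta(C(\kk_{ss}))\setminus\Delta(\nn)$ (as in \refle{leCentralizerAndNilradical}), the functional $h$ is strictly positive on $\Delta(\nn_\cC)$ and nonpositive on $\Delta(C(\kk_{ss}))\setminus\Delta(\nn)$, which exhibits $\nn_\cC$ as the nilradical of the parabolic subalgebra $\qq_h\subset C(\kk_{ss})$.

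Next I would show $C(\kk_{ss})\cap N(\nn_\cC)=\qq_h$, which gives the parabolicity assertion and \refeq{eqPara}. The inclusion $\qq_h\subseteq N(\nn_\cC)$ is clear because $\nn_\cC$ is an ideal of $\qq_h$. For the reverse, $C(\kk_{ss})\cap N(\nn_\cC)$ is $\ad\hh_1$-stable, so it is enough to treat a root vector $g^\gamma$ with $\gamma\in\Delta(C(\kk_{ss}))$: if $\gamma(h)<0$ then $-\gamma\in\Delta(\nn_\cC)$, so $g^{-\gamma}\in\nn_\cC$ while $[g^\gamma,g^{-\gamma}]=h^\gamma\in\hh_1\setminus\nn_\cC$ is nonzero, contradicting that $g^\gamma$ normalizes $\nn_\cC$. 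Hence $\gamma(h)\ge 0$, i.e. $g^\gamma\in\qq_h$.

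It then remains to prove $C(\kk_{ss})\cap N(\nn)=C(\kk_{ss})\cap N(\nn_\cC)$. The inclusion ``$\subseteq$'' is immediate: for $x\in C(\kk_{ss})\cap N(\nn)$ and $y\in\nn_\cC$ one has $[x,y]\in\nn$ and $[x,y]\in C(\kk_{ss})$, so $[x,y]\in\nn_\cC$. For ``$\supseteq$'' it suffices, by linearity and $\hh_1\subseteq N(\nn)$, to show each root vector $g^\gamma\in\qq_h$ lies in $N(\nn)$; so fix $\gamma\sperp\Delta(\kk)$ with $\gamma(h)\ge 0$. Two structural facts drive the argument. First, $\gamma\sperp\Delta(\kk)$ makes $\ad(g^\gamma)$ a homomorphism of $\kk_{ss}$-modules. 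Second, $\gG=\kk\oplus\nn\oplus\mm$ is a decomposition into $\kk$-submodules, where $\mm:=\bigoplus_{\alpha\in\Delta(\gG)\setminus\Delta(\lL)}\gG^\alpha$; indeed for $\delta\in\Delta(\kk)$, $\alpha\notin\Delta(\lL)$ one checks $\alpha+\delta$ can be neither a root of $\kk$ nor of $\nn$, so $[\kk,\mm]\subseteq\mm$. Decomposing $\nn$ into irreducible $\kk_{ss}$-submodules $V$, the image $W:=\ad(g^\gamma)V$ is either $0$ or an irreducible $\kk_{ss}$-module $\cong V$, and by the decomposition it lies entirely in one of $\kk$, $\nn$, $\mm$. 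I aim to force $W\subseteq\nn$. If $V$ is trivial, then $V=\gG^\beta$ with $\beta\in\Delta(\nn_\cC)$ and $[g^\gamma,g^\beta]\in\nn_\cC\subseteq\nn$ since $g^\gamma\in N(\nn_\cC)$. If $V$ is nontrivial and $W\subseteq\kk$, then $W\subseteq\kk_{ss}$ contains a root vector $g^{\gamma+\beta}$ with $\gamma+\beta\in\Delta(\kk)$, and $[g^{-\gamma},g^{\gamma+\beta}]$ is a nonzero multiple of $g^\beta$—impossible, as $g^{-\gamma}\in C(\kk_{ss})$ commutes with $\kk_{ss}$. Finally, if $W\subseteq\mm$, its $\bb\cap\kk$-highest weight vector has weight $\alpha_1=\gamma+\beta_{hw}\in\Delta(\gG)\setminus\Delta(\lL)$, which is $\bb\cap\kk$-singular, so $\alpha_1\in\hw_{\bb\cap\kk}(\gG/\lL)$; then $\gamma(t)=\alpha_1(t)-\beta_{hw}(t)<0$ (since $\alpha_1(t)\le 0$ and $\beta_{hw}(t)>0$), contradicting $\gamma(t)=\gamma(h)\ge 0$. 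Thus $W\subseteq\nn$ in every case, so $g^\gamma\in N(\nn)$.

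The hard part is exactly this last inclusion ``$\supseteq$'', i.e. upgrading ``normalizes $\nn_\cC$'' to ``normalizes $\nn$''. The whole argument turns on converting the cone condition into the functional $t$ and then, after splitting $\nn$ into $\kk_{ss}$-isotypic pieces, observing that the only genuinely dangerous case—where $\ad(g^\gamma)$ pushes an irreducible component into the $\gG/\lL$-direction $\mm$—produces a $\bb\cap\kk$-singular weight on which $t$ has the wrong sign. The trivial-component case and the $W\subseteq\kk$ case are disposed of, respectively, by the ideal property of $\nn_\cC$ and by the defining commutation property of $C(\kk_{ss})$.
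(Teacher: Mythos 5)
Your proof is correct, and in the one step that is actually hard it takes a genuinely different route from the paper's. Your first two paragraphs (extracting the separating functional $t$ from the cone condition and identifying $C(\kk_{ss})\cap N(C(\kk_{ss})\cap\nn)=\qq_h$ by the coroot argument in one direction and the ideal property in the other) coincide in substance with the paper's argument. The divergence is the inclusion $\qq_h\subseteq N(\nn)$. The paper assumes a violating configuration $\alpha+\gamma=\beta$ with $\alpha\in\Delta(C(\kk_{ss}))$, $\alpha(h)=0$, takes $\gamma$ maximal in the $\bb\cap\kk$-order, and then eliminates it by a case analysis over the type of the ambient simple component ($A,D,E,G_2$, then $C$, then $B$, then $F_4$). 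You replace all of this by a uniform representation-theoretic argument: $\ad(g^{\gamma})$ is a $\kk_{ss}$-module homomorphism because $g^{\gamma}\in C(\kk_{ss})$; $\gG=\kk\oplus\nn\oplus\mm$ is a decomposition into $\kk$-submodules; and the image $W$ of an irreducible constituent of $\nn$ either vanishes, or lies in $\kk$ (impossible, since $g^{-\gamma}\in C(\kk_{ss})$ would have to commute with a root vector of $\kk_{ss}$ whose bracket with $g^{-\gamma}$ is a nonzero multiple of a root vector of $\nn$), or lies in $\mm$ (impossible, since its $\bb\cap\kk$-singular weight $\gamma+\beta_{hw}$ lies in $\hw_{\bb\cap\kk}(\gG/\lL)$, forcing $\gamma(t)<0$), hence lies in $\nn$. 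This is type-free and shorter, uses the cone condition exactly once and transparently, and avoids both the maximality device and \refle{lestreakOfRoots}; what the paper's route buys instead is that it stays entirely within elementary root combinatorics.

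One point you must make explicit, because as written it is a false general principle: an irreducible submodule of a direct sum of submodules need not lie in a single summand (a diagonal copy in $V\oplus V$ does not). The trichotomy for $W$ holds here only because the irreducible constituents $V$ of $\nn$ can, and should, be taken to be $\kk$-submodules, i.e.\ $\hh$-stable (irreducible $\kk$-constituents are $\kk_{ss}$-irreducible by Schur). Then $W=\ad(g^{\gamma})V$ is again $\hh$-stable and has no zero weight, since $-\gamma\in\Delta(C(\kk_{ss}))$ with $-\gamma(h)\leq 0$ forces $-\gamma\notin\Delta(\nn)$; consequently $W=(W\cap\kk)\oplus(W\cap\nn)\oplus(W\cap\mm)$ is a decomposition into $\kk_{ss}$-submodules and irreducibility pushes $W$ into a single summand. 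The same $\hh$-stability is what guarantees that the singular vector of $W$ is a root vector of weight $\gamma+\beta_{hw}$ with $\beta_{hw}\in\Delta(\nn)$, which your final sign contradiction requires. With these two sentences added, the argument is complete.
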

\begin{proof}
Throughout the entire proof we use \refle{leC(kss)}. 

The equality $\Cone_{\ZZ}(\hw_{\bb\cap\kk}(\gG/\lL))\cap\Cone_{\ZZ}(\Delta(\nn))= \{0\}$ implies that there exists $h\in\hh$ for which $\beta(h) >0, \forall \beta\in\Delta(\nn)$ and $\alpha(h)\leq 0, \forall \alpha\in\hw_{\bb\cap\kk}(\gG/\lL)$. Let $\mathfrak{q}_h$ be defined as in \refeq{eqPara}. 


We claim first that $\mathfrak{q}_h\supset \left(C(\kk_{ss})\cap N(C(\kk_{ss})\cap\nn)\right)\supset \left( C(\kk_{ss} ) \cap N(\nn)\right)$. Suppose on the contrary that there exists $x:=\underbrace{g}_{\in\hh_1}$ $+ \sum_{ \alpha \sperp\Delta (\kk_{ss})} a_{\alpha} g^\alpha\in C(\kk_{ss})\cap N(C(\kk_{ss})\cap\nn)$ for which there is a root $\gamma\in \Delta(C(\kk_{ss}))$ such that $\gamma(h)<0$ and $a_\gamma\neq 0$. Then $\hh\subset N(C(\kk_{ss})\cap\nn)$ implies that whenever $a_\alpha\neq 0$ we have $g^\alpha\in N(C(\kk_{ss})\cap\nn)$. In particular $g^\gamma\in N(C(\kk_{ss})\cap\nn)$. As $C(\kk_{ss})$ is reductive, ${-\gamma} \in\Delta (C(\kk_{ss}))$, and $-\gamma(h)>0$ implies $-\gamma\in\Delta(\nn)$. Therefore $\gG^{-\gamma}\in C(\kk_{ss})\cap\nn$ which contradicts the inclusion $\gG^{\gamma}\subset N(C(\kk_{ss})\cap\nn)$.

We claim next that $\mathfrak{q}_h\subset C(\kk_{ss})\cap N(C(\kk_{ss})\cap\nn)$. Fix $\alpha\in \Delta( C( \kk_{ss} ))$ for which $\alpha(h) \geq 0$. If $\beta\in\Delta(C(\kk_{ss})\cap\nn)$ and $(\alpha+\beta)$ is a root, then $(\alpha+\beta)(h)>0$. Therefore $\alpha + \beta \in \Delta(C(\kk_{ss}) \cap\nn)$ as all roots in $\Delta(C(\kk_{ss}))$ are $\bb\cap\kk$-singular. Therefore $g^{\alpha}\in N(C(\kk_{ss})\cap\nn)$. 

So far we have established that $\mathfrak{q}_h= C(\kk_{ss})\cap N(C(\kk_{ss})\cap\nn)$; we are left to prove that $\qq_h \subset C(\kk_{ss}) \cap N(\nn)$. Suppose, on the contrary, that there is $-\alpha\in\Delta(C(\kk_{ss}))$ such that  $-\alpha(h)= 0$ and $\gamma:=-\alpha+\beta\in\Delta(\gG)\backslash\Delta(\nn)$ for some $\beta\in\Delta(\nn)$. Since $-\alpha\sperp\Delta(\kk)$, $-\alpha,\alpha\in\hw_{\bb\cap\kk}(\gG/\lL)$ and we have the relation 
\begin{equation}\label{eqBadRel}
\alpha+\gamma=\beta.
\end{equation}
Clearly $\gamma\notin\Delta(\kk)$. For the already fixed choice of $\alpha$, assume that $\gamma\in\Delta(\gG) \backslash \Delta(\lL)$ is a root maximal with respect to the partial order defined by $\bb\cap \kk$, such that there exists a relation \refeq{eqBadRel} as above. If $\gamma\in \hw_{\bb\cap\kk}(\gG/\lL)$, this would contradict the cone condition; therefore there exists $\delta\in\Delta^+(\kk)$ such that $\delta+\gamma$ is a root. The requirement that $\gamma\in\Delta( C(\kk_{ss}))$ forces $\delta$ to be strongly orthogonal to $\alpha$. Therefore $\delta$ has the same scalar product with $\gamma$ as it does with $\beta$, but at the same time $\delta+\gamma$ is a root and $\delta+\beta$ isn't (due to the maximality of $\delta$). We will prove that these requirements are contradictory. Let the simple component of $\gG$ containing $\alpha$, $\gamma$ and $\beta$ be $\sS$.

\begin{itemize}
\item[Case 1] $\sS$ is of type $A$, $D$, $E$ or $G_2$. The inequality $\langle\delta, \beta\rangle= \langle \delta, \gamma \rangle< 0$ contradicts the maximality of $\gamma$ because if it held, we could add $\delta$ on both sides of \refeq{eqBadRel}. The inequality $\langle\delta, \gamma \rangle \geq 0$ implies $\langle\delta, \gamma \rangle = 0$ (the sum of two roots with positive scalar product is never a root). In turn, this contradicts the condition that $\delta+\gamma$ is a root since in root systems of type $A$, $D$, $E$ and $G_2$, strong orthogonality is equivalent to orthogonality.
\item[Case 2] $\sS$ is of type $C$. Without loss of generality we can assume that \refeq{eqBadRel} is $\underbrace{\eps_{j_1}+ \eps_{j_2}}_{\alpha}$ +$\underbrace{ (-\eps_{j_2} + \eps_{j_3})}_{\gamma}$ $= \underbrace{\eps_{j_1}+  \eps_{j_3}}_{\beta}$, where the indices $j_1, j_2, j_3$ are not assumed to be pairwise different. Then $\delta=-\eps_{j_3}+\eps_l$ contradicts the maximality of $\gamma$ for all possible choices of the indices $j_1, j_2, j_3, l$. Furthermore, $\delta=\eps_{j_2}+ \eps_l$ contradicts $\alpha \in \Delta(C(\kk_{ss}))$ for all possible choices of the indices $j_1, j_2, j_3, l$. Contradiction.
\item[Case 3] $\sS$ is of type $B$. 
\begin{itemize}
\item[Case 3.1] $\alpha$ and $\gamma$ are both short. Without loss of generality \refeq{eqBadRel} becomes $\underbrace{\eps_{1}}_{\alpha}$ $+ \underbrace{\eps_{2}}_{\gamma}$ $= \underbrace{\eps_{1} + \eps_{2}}_{\beta}$. The maximality of $\gamma$ implies $\delta=\eps_{1}-\eps_{2}$ which contradicts $\alpha\in\Delta(C(\kk_{ss}))$.
\item[Case 3.2] $\alpha$ is short and $\gamma$ is long. Without loss of generality \refeq{eqBadRel} becomes $\underbrace{\eps_{1}}_{\alpha} + \underbrace{(-\eps_{1}+\eps_{2})}_{\gamma}$ $= \underbrace{\eps_{2}}_{\beta}$. The maximality of $\gamma$ implies $\delta =\eps_{1} \pm\eps_l$ for some index $l$, which contradicts $\alpha\in\Delta(C(\kk_{ss}))$.
\item[Case 3.3] $\alpha$ is long and $\gamma$ is short. Without loss of generality \refeq{eqBadRel} becomes $\underbrace{\eps_{1}+\eps_{2}}_{\alpha}$ +$\underbrace{(-\eps_{2})}_{\gamma} $ $= \underbrace{ \eps_{1} }_{ \beta} $. Thus $\delta=$ $\eps_{2}$ $+\eps_l$ for some index $l$ and $\alpha \in \Delta( C( \kk_{ ss}))$ implies $\delta=\eps_{2}-(\eps_{1})$. Then $\beta + \beta$ $+\delta$ $=\alpha$ yields a contradiction.
\item[Case 3.4] Both $\alpha$ and $\gamma$ are long. Without loss of generality \refeq{eqBadRel} becomes
$\underbrace{ \eps_{1}- \eps_{2}}_{\alpha}$ $+\underbrace{ \eps_{2} +\eps_{3}}_{\gamma}$ $= \underbrace{\eps_{1} +\eps_{3}}_{\beta}$. The assumption that $\delta$ is short contradicts either $\alpha\in \Delta(C(\kk_{ss}))$ or the maximality of the choice of $\gamma$. The fact that all roots participating in \refeq{eqBadRel} together with the root $\delta$ are long is contradictory. Indeed, otherwise we could use the exact same data to obtain a relation (\ref{eqBadRel}) in type $D$.
\end{itemize}
\item[Case 4] $\sS$ is of type $F_4$. Suppose on the contrary that there exist roots $\alpha,\beta,\gamma,\delta$ for which \refeq{eqBadRel} holds and $\delta\sperp\alpha$, $\delta\sperp\beta$, $\delta\perp\gamma$. The same conditions would continue to hold in the root subsystem $\Delta '\supset \beta$ generated by $\alpha, \gamma, \delta $. Since $\Delta'$ is of rank 3, setting $\Delta(\nn'):=\{\beta\}$, $\Delta(\kk_{ss}') =\{\pma\delta\}$ we get data whose existence we proved impossible in the preceding cases. Contradiction.
\end{itemize}
\end{proof}

\begin{proof}[~of \refth{thMainResult}]
First, suppose $\Cone_{\ZZ} (\hw_{\bb\cap\kk}(\gG/\lL))\cap \Cone_{\ZZ}(\Delta(\nn))\neq \{0\}$. Then $\lL$ is a Fernando-Kac subalgebra of infinite type by Lemma \ref{leNotFK} and \refsec{secBigTedious}.

Second, suppose $\Cone_{\ZZ} (\hw_{\bb\cap\kk}(\gG/\lL))\cap \Cone_{\ZZ}(\Delta(\nn))= \{0\}$ but $[C(\kk_{ss})\cap N(\nn)]$ has a Levi subalgebra that has a simple component of type $B$, $D$, or $E$. Let $h\in\hh$ be such that $\gamma(h)>0$ for all $\gamma \in \Cone_{\ZZ}(\Delta(\nn))$ and $\gamma(h)\leq 0$ for all $\gamma\in\hw_{\bb\cap\kk}(\gG/\lL)$. According to Proposition \ref{lePara}, $[C(\kk_{ss}) \cap N(\nn)] =[C(\kk_{ss})\cap N(C(\kk_{ss})\cap\nn)]=\qq_h$, where $\qq_h$ is defined as in \refle{lePara}. Assume on the contrary that there exists an irreducible $(\gG,\lL)$-module with $\gG[M]=\lL$. Pick an arbitrary $\bb\cap\lL$-singular vector $v$ and consider the $\qq_h\cap C(\kk_{ss})_{ss}$-module $N$ generated by $v$. We have that $N$ is a strict $(\qq_h\cap C(\kk_{ss})_{ss}, \hh\cap C(\kk_{ss})_{ss})$-module (``torsion-free'' according to the terminology of \cite{Fernando1}). Then, according to \cite[Theorem 5.2]{Fernando1}, it cannot have finite-dimensional $\hh\cap C(\kk_{ss})_{ss}$-weight spaces. In particular there are infinitely many $u_1,\dots\in U(C(\kk_{ss}))$ such that $u_1\cdot v,\dots$ are linearly independent and of same $\hh$-weight. Then $u_i\in U(C(\kk_{ss}))$ implies $u_1\cdot v,\dots$ are all $\bb\cap\kk$-singular, which contradicts the fact that $M$ is of finite type over $\kk$ (\cite[Theorem 3.1]{PSZ}). 


Third, suppose $\Cone_{\ZZ}(\hw_{\bb\cap\kk}(\gG/\lL))\cap \Cone_{\ZZ}(\Delta(\nn))=\{0\}$ and $C(\kk_{ss})\cap N(\nn)$ has simple Levi components of type $A$ and $C$ only.  We will prove that $\lL$ is Fernando-Kac subalgebra of finite type by the construction \cite[Theorem 4.3]{PSZ}. Since the cones do not intersect, there exists a hyperplane in $\hh^*$ given by an element $h\in\hh$ such that $\Delta(\nn))$ lies in the $h$-strictly positive half-space, and $\Cone_{\ZZ}(\hw_{\bb\cap\kk}(\gG/\lL))$ lies in the $h$-non-positive half-space. Clearly we can assume $h$ to have rational action on $\hh^*$.

We introduce now a ``small perturbation'' procedure for $h$ to produce an element $h'$  such that $\gamma(h')\neq 0$ for all $\gamma\in\Delta(\kk)$. Suppose $\gamma\in \Delta(\bb\cap\kk)$ is a root with $\gamma(h)=0$. Define $g\in\hh$ by the properties $\gamma(g)=1$, $\gamma'(g)=0$ for all $\gamma'\perp\gamma$. Now choose $t$ to be a sufficiently small positive rational number ($t\leq\half \min_{\beta \in \Delta(\gG), \beta(h)\neq 0}{|\beta(h)|}$ serves our purpose). Set $h_{1}:=h- tg$. Then all $h$-positive (respectively $h$-negative) vectors remain $h_1$-positive (respectively $h_1$-negative) vectors. The only roots $\alpha$ whose positivity would be affected by the change are those with $\alpha(h)=0$, $\langle\alpha,\gamma\rangle\neq 0$. By the preceding remarks, $\Delta(\nn)$ lies in the $h_1$-positive half-space. We show next that $\Cone_{\ZZ} (\hw_{\bb\cap\kk} (\gG/\lL))$ remains in the $h_1$-non-positive half-space. Suppose on the contrary we had a vector $\alpha\in \hw_{\bb \cap \kk}( \gG/ \lL)$ that now lies in the $h_1$-positive half space. By the preceding remarks $\alpha(h)=0$. Therefore $\alpha(g)= -\frac{1}{t}\alpha(h_1)<0$ which implies $\langle\alpha,\gamma\rangle <0$ and thus $\alpha+\gamma$ is a root. Contradiction. 

If there is a root of $\kk$ that vanishes on $h_1$, we apply the above procedure again, and so on. The number of roots $\alpha\in\Delta(\kk)$ for which $\alpha(h_1)=0$ is smaller than the corresponding number for $h$. Therefore after finitely many iterations we will obtain an element, call it $h'$, for which $\gamma(h')\neq 0$ for all $\gamma\in\Delta(\kk)$ and
\begin{equation}\label{eqTheCondition2}
\begin{array}{c}
\alpha(h')=0 \mathrm{~for~all~} \alpha \mathrm{~for~which~}\gG^\alpha\in C(\kk_{ss}).
\end{array}
\end{equation}


Now define 
\[
\pp:=\bigoplus_{\alpha(h')\geq 0}\gG^{\alpha}, \pp_h:=\bigoplus_{\alpha(h)\geq 0}\gG^{\alpha}.
\]
Then $\pp_{red}\subset{(\pp_h)}_{red}=\hh+\qq_h$, where $\qq_h$ is the subalgebra defined in \refle{lePara}. By \refle{lePara}, we get ${\qq_h}=C(\kk_{ss})\cap N(\nn)$ and the latter is direct sum of simple components of type $A$ and $C$ by the centralizer condition. Thus $\pp_{red}$ is a sum of root systems of type $A$ and $C$ (since types $A$ and $C$ contain root subsystems of type $A$ and $C$ only). We can now pick a $(\pp_{ red} , \hh)$-module $L$  for which $\pp_{red}[L]=\hh$ (see \cite{BrittenLemire}, \cite[Sections 8,9]{Mathieu}), and we can extend $L$ to a $\pp$-module by choosing trivial action of the nilradical of $\pp$. The choice of $h'$ allows us to apply \cite[Theorem 4.3]{PSZ} to get a $\gG$-module $M$ for which $\gG[M]$ is the sum of $\kk$ and the maximal $\kk$-stable subspace of $\pp[L]=\hh\crplus\nn_\pp$. The fact that at least one weight of each irreducible direct summand of $\gG/\lL$ (namely, its $\bb\cap\kk$-singular weight) is outside of $\pp[L]$ implies that the maximal $\kk$-stable subspace of $\pp[L]$ is $\nn$. This completes the proof.
\end{proof}

\addcontentsline{toc}{section}{Bibliography}

\bibliographystyle{amsalpha}
\bibliography{./TodorMilevsBibliography}

\begin{appendix}

\arxivVersion{
\section{Note on table generation}
All tables in the appendix are generated by computer. The tables are also available in .html format from the author or from the web server of the ``vector partition'' program.
\section{Reductive root subalgebras of the exceptional Lie algebras}\label{secRootSATables}
The reductive root subalgebras of the exceptional Lie algebras are described and tabulated in \cite{Dynkin:Semisimple}. Our tables list in addition the type of $C(\kk_{ss})_{ss}$ and the inclusions between the root subsystems parametrizing the reductive root subalgebras.
\subsection{$F_4$}
All diagrams that consist of short roots are labeled by $'$. For example, $A_2'$ has 6 short roots; in the notation $C_3+A_1$, the root system $A_1$ has long roots.


\section{$\lL$-infinite weights for the exceptional Lie algebras}\label{secAppendixExceptional}
This section lists all minimal $\lL$-infinite relations and the corresponding two-sided weights under the assumption that $C(\kk_{ss})\cap\nn$ is the nilradical of a parabolic subalgebra of  $C(\kk_{ss})\cap\nn$ containing $C(\kk_{ss})\cap \hh$. 

In the tables to follow, under each root $\alpha_i$ (respectively, $\beta_j$) we write the type of the semisimple component of $\kk$ whose roots are linked to $\alpha_i$ (respectively, $\beta_j$). The ~~~'~~~ sign is used to distinguish different components of $\kk$ that have the same Dynkin type. In type $F_4$, the sign ~~~$'$~~~ stands for a component of $\kk$ whose roots are short. For example, $A_1'+A_1$ represents the direct sum of two $\sL(2)$, one with long and one with short roots, and $A_1$+$A_1$' stands for two long-root $\sL(2)$'s. For example, if a root $\alpha_i$ is linked to $A_1$', and a root $\beta_j$ is linked to $A_1$', then $\alpha_i$ and $\alpha_j$ are linked to the same component of $\Delta(\kk)$; similarly if a root $\alpha_i$ is linked to $A_1$, and a root $\beta_j$ is linked to $A_1$', then the two roots are linked to two different components of $\Delta(\kk)$.
\newpage
\addtolength{\hoffset}{-1.5cm}
\addtolength{\textwidth}{8cm}
\addtolength{\voffset}{-3.3cm}
\addtolength{\textheight}{6.3cm}

\subsection{$F_4$}
\noindent Number of different non-solvable subalgebras up to $\gG$-automorphism such that $\nn\cap C(\kk_{ss})$ is a nilradical of a parabolic subalgebra of $\kk$ containing $C(\kk_{ss})\cap \hh$: 503

\noindent Among them 234 satisfy the cone condition and 269 do not.

\tiny

 & $C_3$ & $F_4$ &  $\langle\beta_1,\beta_2\rangle=1$, $\langle\alpha_1,\beta_1\rangle=2$, $\langle\alpha_1,\beta_2\rangle=2$, $\langle\alpha_2,\beta_1\rangle=1$, $\langle\alpha_2,\beta_2\rangle=1$, 
\\
\multicolumn{5}{c}{$\varepsilon$-form~relative~to~the~subalgebra~generated~by~$\mathfrak{k}$~and~the~relation}\\
\multicolumn{5}{c}{(2$\varepsilon_{1}$) + (+$\varepsilon_{2}$+$\varepsilon_{3}$)=($\varepsilon_{1}$+$\varepsilon_{3}$) + ($\varepsilon_{1}$+$\varepsilon_{2}$)}\\\hline
\end{tabular}

\medskip ~

\medskip ~

\medskip ~

\medskip ~

\noindent Corresponding $\lL$-(non-strongly) infinite weight.

\medskip ~

\medskip ~

\medskip ~

\noindent\begin{tabular}{rcl p{1cm}p{1cm}p{3cm} } \multicolumn{3}{c}{ Relation / linked $\mathfrak{k}$-components} &$\alpha_i$'s, $\beta_i$'s generate & adding $\mathfrak{k}$ generates&Non-zero scalar products
\\\hline\multicolumn{5}{c}{$\mathfrak{k}$-semisimple type: $A_1$+$A_1$}\\
\hline\hline\begin{tabular}{cc}$\alpha_1$+ & $\alpha_2$\\\tiny{ $A_1$ } & \tiny{  }\end{tabular} &\begin{tabular}{c} = \\~ \end{tabular} & \begin{tabular}{c}$\beta_1$\\\tiny{ $A_1$ }\end{tabular} & $C_3$+$A_1$ & $C_3$+$A_1$ & $\langle\alpha_1,\alpha_2\rangle=-1$, $\langle\alpha_1,\beta_1\rangle=1$, $\langle\alpha_2,\beta_1\rangle=1$, 
\\
\multicolumn{5}{c}{$\varepsilon$-form~relative~to~the~subalgebra~generated~by~$\mathfrak{k}$~and~the~relation}\\
\multicolumn{5}{c}{($\varepsilon_{1}$-$\varepsilon_{2}$) + (+$\varepsilon_{2}$+$\varepsilon_{3}$)=($\varepsilon_{1}$+$\varepsilon_{3}$)}\\\hline
\end{tabular}

\normalsize
\subsection{$E_6$: $\lL$-strictly infinite weights and corresponding relations }
\noindent Number of different non-solvable subalgebras up to $\gG$-automorphism such that $\nn\cap C(\kk_{ss})$ is a nilradical of a parabolic subalgebra of $\kk$ containing $C(\kk_{ss})\cap \hh$: 2044

\noindent Among them 706 satisfy the cone condition and 1338 do not.

\tiny



} 

\end{appendix}

\end{document}